\quad \textbf{x}\in \mathbb{T}^d,t\in\mathbb{R}_+
\newcommand{\R}{\mathbb{R}}
\newcommand{\Z}{\mathbb{Z}}
\newcommand{\N}{\mathbb{N}}
\newcommand{\E}{\mathbb{E}}
\begin{document}



\section{Introduction}
Stochastic Partial Differential Equations (SPDEs) have gathered significant attention in recent years, due to their broad applications across diverse fields such as applied mathematics, theoretical physics, polymer science, and mathematical finance. For a comprehensive introduction in this domain, see \cite{khoshnevisan2014analysis}. Meanwhile, fractional SPDEs driven by homogeneous noise have also emerged as a vibrant area of research, attracting substantial interest due to their rich mathematical structure and wide-ranging applicability. See for example \cite{foondun2013stochastic}, \cite{balan2014note}, \cite{liu2017some}, \cite{nualart2018moment} and \cite{guerngar2020moment}.

In this paper, we estimate small ball probabilities for the solution to the fractional stochastic heat equation of the type:
\begin{equation}
\label{SHE}
\partial_t u(t,\textbf{x})=-(-\Delta)^{\alpha/2}u(t,\textbf{x})+\sigma(t,\textbf{x},u)\dot{F}(t,\textbf{x}),\quad \textbf{x}\in \mathbb{T}^d,t\in\R_+,
\end{equation}
with given initial deterministic profile $u(0,\cdot)=u_0:\mathbb{T}^d\to \R$ where $\mathbb{T}^d:=\left[-\frac{1}{2},\frac{1}{2}\right]^d$ is a $d$-dimensional torus. The operator $-(-\Delta)^{\alpha/2}$, where $1<\alpha\leq 2$, is the fractional power Laplacian on $\mathbb{T}^d$. The centered Gaussian noise $\dot{F}$ is white in time and homogeneous in space, i.e.,
\begin{equation*}
\mathbb{E}\left[\dot{F}(t,\textbf{x})\dot{F}(s,\textbf{y})\right]=\delta_0(t-s)\Lambda(\textbf{x}-\textbf{y}),
\end{equation*}
where $\delta_0$ is the Dirac delta generalized function and $\Lambda:\mathbb{T}^d\to\R_+$ is a non-negative generalized function whose Fourier series is given by
\begin{equation}
\label{lambdafourier}
\Lambda(\textbf{x})=\sum_{\textbf{n}\in\mathbb{Z}^d}\lambda(\textbf{n})\exp(2\pi i \textbf{n}\cdot\textbf{x}),
\end{equation}
where $\textbf{n}\cdot\textbf{x}$ represents the dot product of two $d$-dimensional vectors. 

We are working on the torus $\mathbb{T}^d$, so that the expression $\textbf{x}-\textbf{y}$ should be interpreted as the unique point $\textbf{z}$ in $\left[-\frac{1}{2},\frac{1}{2}\right]^d$ such that $\textbf{x}-\textbf{y} = \textbf{z}+\textbf{n}$ for some $\textbf{n}\in\mathbb{Z}^d$. Moreover, we will need the following two assumptions on the function $\sigma:\mathbb{R}_+\times\mathbb{T}^d\times\R\to \R$.

\begin{hypothesis}
There exists a constant $\mathcal{D}>0$ such that for all $t\geq 0$, $\textbf{x}\in\mathbb{T}^d$ and $u,v\in\R$,
\begin{equation}
\label{hypothesis1}
\vert \sigma(t,\textbf{x},u)-\sigma(t,\textbf{x},v)\vert\leq\mathcal{D}|u-v|.
\end{equation}
\end{hypothesis}

\begin{hypothesis}
There exist constants $\mathcal{C}_1$, $\mathcal{C}_2>0$ such that for all $t\geq 0$, $\textbf{x}\in\mathbb{T}^d$ and $u\in\R$,
\begin{equation}
\label{hypothesis2}
\mathcal{C}_1\leq \sigma(t,\textbf{x},u)\leq \mathcal{C}_2.
\end{equation}
\end{hypothesis}

Assumption \eqref{hypothesis2} assures that $\sigma$ is non-zero, which is helpful for us to control the variance of noise terms.  We say that a process $\{u(t,\textbf{x})\}_{t\geq 0, \textbf{x}\in\mathbb{T}^d}$ on a probability space $(\Omega,\mathcal{F},P)$ is a \textit{mild solution} to \eqref{SHE} in the sense of Walsh \cite{walsh1986anintroductiontostochastic} if it is square integrable, adapted with respect to the filtration induced by $F$ and satisfies:
\begin{equation}
\label{mild}
u(t,\textbf{x})=\int_{\mathbb{T}^d}\bar{p}(t,\textbf{x}-\textbf{y})u_0(\textbf{y})d\textbf{y}+\int_{[0,t]\times\mathbb{T}^d}\bar{p}(t-s,\textbf{x}-\textbf{y})\sigma(s,\textbf{y},u(s,\textbf{y}))F(dsd\textbf{y}),
\end{equation}
where $\bar{p}:\R_+\times\mathbb{T}^d\to \R_+$ is the fundamental solution of the fractional heat equation on $\mathbb{T}^d$,
\begin{equation}
\label{equation1}
\begin{cases}
\partial_t\bar{p}(t,\textbf{x})=-(-\Delta)^{\alpha/2}\bar{p}(t,\textbf{x}),\\
\bar{p}(0,\textbf{x})=\delta_0(\textbf{x}).
\end{cases}
\end{equation}

We give more information about the heat kernel in Section \ref{estimates}. Under the Lipschitz assumption \eqref{hypothesis1} and given an initial profile $u_0(\textbf{x})=0$, it is well known (see \cite{dalang1999extending} and Appendix of \cite{liu2017some}) that if the Fourier coefficients of $\Lambda(\textbf{x})$ satisfy
\begin{equation}
\label{Lambda}
\sum_{\textbf{n}\in\mathbb{Z}^d}\frac{\mathcal{F}(\Lambda)(\textbf{n})}{1+\vert \textbf{n}\vert^\alpha}<\infty,
\end{equation}
where $\vert \cdot\vert$ denotes the Euclidean norm, then there exists a unique random field solution $u(t,\textbf{x})$ to equation \eqref{mild}. Examples of spatial correlation function satisfying \eqref{Lambda} are:
\begin{enumerate}
\item[1.] The spatial covariance function that is an analogue of the Riesz kernel on $\R^d$, which is specified in \cite{chen2023parabolic}. Moreover, \cite{brosamler1983laws}, Lemma 2.9, proved that there exists a constant $C(d,\beta)>0$ such that the covariance function admits the following estimate,
\begin{equation}
\label{Lambdabound}
\Lambda(\textbf{x})\leq C(d,\beta)\vert \textbf{x}\vert^{-\beta},
\end{equation}
when $0<\beta<d$. There are two positive constants $c_1(d)$ and $c_2(d)$ such that the Fourier coefficients for $\Lambda$ are given by
\begin{equation}
\label{lambdabound}
\lambda(\textbf{n})=\begin{cases}
c_1(d) & \text{if~$\textbf{n}=\textbf{0}$};\\
c_2(d)\vert \textbf{n}\vert^{-(d-\beta)}& \text{if~$\textbf{n}\in\mathbb{Z}^d\setminus \{\textbf{0}\}$},
\end{cases}
\end{equation}
and it is easy to check that condition \eqref{Lambda} holds whenever $\beta<\alpha$.
\item[2.] Space-time white noise $\Lambda(\textbf{x})=\delta_0(\textbf{x})$. In this case, $\lambda(\textbf{n})$ is a constant and \eqref{Lambda} is only satisfied when $\alpha>d$, that is, $d=1$ and $1<\alpha\leq 2$.
\end{enumerate}
We refer readers to \cite{foondun2013stochastic} for more examples.

Small ball problems have been well studied and one can see \cite{li2001gaussian} for an overview of known results on Gaussian processes and references on other processes. In summary, we investigate the behavior of the probability that a stochastic process $X_t$ starting at the origin stays near its initial location for a long time, i.e.,
\[\log P\left(\sup_{0\leq t\leq T}\vert X_t\vert<\varepsilon\right)\]
as $\varepsilon\downarrow 0$. When $X_t$ is Brownian motion, small ball estimates are obtained from the reflection principle or the study of eigenvalues, among other techniques, resulting in a sharp approximation, as $\varepsilon\downarrow 0$
\[\log P\left(\sup_{0\leq t\leq T}\vert X_t\vert<\varepsilon\right)\sim-\frac{\pi^2T}{8\varepsilon^2}.\]

Often these results are given in terms of metric entropy estimates \cite{kuelbs1993metric}, which are hard to explicitly compute. However, there are a few Gaussian measures for which small ball probabilities can be completely determined. One exceptional case is Brownian sheet, for which fairly sharp small ball estimates are explicitly known, see \cite{bass1988probability} and \cite{talagrand1994small}. \cite{khoshnevisan1998chung} studied small ball estimates for the integrated Brownian motion using local time techniques. In \cite{dalang2009minicourse}, the authors provided small ball estimates for Gaussian processes that satisfy a certain condition which is related to the Gaussian concept of local nondeterminism.

However, there have not been many explorations of small ball probabilities in the context of SPDEs. \cite{lototsky2017small} has studied small ball problems for a linear SPDE with additive white noise, where the solution is a Gaussian process. \cite{martin2003small} followed the approach of \cite{talagrand1994small} to study a stochastic wave equation
\begin{equation}
\label{waveequation}
\partial^2_tu(t,x)=\frac{1}{2}\partial^2_xu(t,x)+f(u)+g(t,x)\dot{W}(t,x),
\end{equation}
where $\dot{W}(t,x)$ is a space-time white noise and $g(t,x)$ is a deterministic function. Without $f(u)$, the solution $u$ would be a Gaussian process of the type studied by \cite{bass1988probability} and \cite{talagrand1994small}. 

Although the stochastic wave equation \eqref{waveequation} includes a multiplicative noise term, in our case, the noise coefficient $\sigma$ depends on the solution $u$. This dependency causes $u$ to deviate from being Gaussian. The study of small ball probabilities for equations of this type has seen notable developments: \cite{athreya2021small} analyzed the stochastic heat equation driven by space-time white noise; \cite{foondun2022small} extended this work by considering small ball probabilities under a H\"older semi-norm; and \cite{han2024support} derived results under the assumption that the diffusion coefficient $\sigma$ is uniformly elliptic but only H\"older continuous in $u$. All these works were restricted to the one-dimensional torus due to the limitations imposed by space-time white noise.

Recently, \cite{chen2024small} adapted the approach of \cite{athreya2021small}, incorporating the foundational framework from \cite{dalang1999extending}, to investigate small ball probabilities for the stochastic heat equation driven by spatially homogeneous noise on the one-dimensional torus. The analysis utilized the Da Prato-Kwapien-Zapczyk factorization method to estimate fluctuations in the noise term and introduced a distinct time step size compared to \cite{athreya2021small}. The primary aim of the present work is to generalize these results to the fractional stochastic heat equation driven by spatially homogeneous noise on the $d$-dimensional torus. 

\section{Main Result}
\begin{theorem}
\label{thm}
Under the conditions \eqref{hypothesis1} and \eqref{hypothesis2}, if $u(t,\textbf{x})$ is the solution to \eqref{SHE} with $u_0(\textbf{x})\equiv 0$ and $\Lambda(\textbf{x})$ specified in \eqref{Lambdabound}, then there are positive constants $\textbf{C}_0,\textbf{C}_1,\textbf{C}_2,\textbf{C}_3,\mathcal{D}_0$ and $\varepsilon_0$ depending entirely on $\mathcal{C}_1,\mathcal{C}_2$, $\alpha$, $\beta$ and $d$, such that for all $\mathcal{D}<\mathcal{D}_0,0<\varepsilon<\varepsilon_0$ and $T>1$, we have
\begin{enumerate}
\item[(a)] when $2\beta\leq \alpha$ and $d=1$, 
\begin{equation*}
\textbf{C}_0\exp\left(-\frac{\textbf{C}_1T}{\varepsilon^{\frac{4\alpha -2\beta}{\alpha-\beta}}}\right)\leq P\left(\sup_{\substack{0\leq t\leq T\\ \textbf{x}\in\mathbb{T}}}\vert u(t,\textbf{x})\vert\leq \varepsilon\right)\leq\textbf{C}_2\exp\left(-\frac{\textbf{C}_3T}{\varepsilon^{\frac{2(\alpha+\beta)}{\alpha-\beta}}}\right);
\end{equation*}
\item[(b)] in other cases, 
\begin{equation*}
0<P\left(\sup_{\substack{0\leq t\leq T\\ \textbf{x}\in\mathbb{T}^d}}\vert u(t,\textbf{x})\vert\leq \varepsilon\right)\leq\textbf{C}_2\exp\left(-\frac{\textbf{C}_3T}{\varepsilon^{\left(\frac{4\alpha-2\beta}{\alpha-\beta}\right)\wedge\left(\frac{2(\alpha d+\beta)}{d(\alpha-\beta)}\right)}}\right).
\end{equation*}
\end{enumerate}
\end{theorem}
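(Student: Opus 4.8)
The proof runs through the Markov property of $u$ and a time–discretization at scale $\tau=\tau(\varepsilon)$ chosen at the end; the upper and lower bounds require two rather different one–interval inputs.

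\emph{Preliminaries.} I would first record the spectral representation $\bar p(t,\mathbf x)=2^{-d}\sum_{\mathbf n\in\mathbb{Z}^d}e^{-\pi^{\alpha}|\mathbf n|^{\alpha}t}e^{\pi i\mathbf n\cdot\mathbf x}$ of the solution of \eqref{equation1} on $\mathbb{T}^d$, the induced pointwise and increment bounds for $\bar p$, and — via Parseval and the Riesz estimate \eqref{lambdabound} — the crucial fact that $\int_0^t\sum_{\mathbf n}\lambda(\mathbf n)e^{-2\pi^{\alpha}|\mathbf n|^{\alpha}(t-s)}\,ds$ is comparable to $t^{1-\beta/\alpha}$ for $0<t\le 1$ (the high–frequency sum converging precisely because $\beta<\alpha$). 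From this, the Walsh isometry, Burkholder–Davis–Gundy and \eqref{hypothesis2}, I would obtain for the stochastic convolution $N(t,\mathbf x):=\int_{[0,t]\times\mathbb{T}^d}\bar p(t-s,\mathbf x-\mathbf y)\sigma(s,\mathbf y,u)\,F(ds\,d\mathbf y)$ the moment bounds $\mathbb{E}|N(t,\mathbf x)-N(t',\mathbf x')|^{2p}\le C_p(|t-t'|+|\mathbf x-\mathbf x'|^{\alpha})^{p\theta}$ for a suitable $\theta\in(0,1)$, hence Hölder regularity in space–time and $\mathbb{E}\sup_{[0,\tau]\times\mathbb{T}^d}|N|^{2p}\lesssim\tau^{p\theta}$, together with the matching variance lower bound $\mathbb{E}|N(t,\mathbf x)|^2\ge c\,\mathcal{C}_1^2\,t^{1-\beta/\alpha}$ that follows from \eqref{hypothesis2}.

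\emph{Upper bound (both cases).} Fix $t_k=k\tau$ and condition successively on $\mathcal{F}_{t_k}$. On $\{\sup_{[0,t_k]\times\mathbb{T}^d}|u|\le\varepsilon\}$ the deterministic term $\int_{\mathbb{T}^d}\bar p(\tau,\mathbf x-\mathbf y)u(t_k,\mathbf y)\,d\mathbf y$ has sup–norm $\le\varepsilon$, so conditionally $u(t_{k+1},\cdot)$ is a bounded, $\mathcal{F}_{t_k}$–measurable function plus a one–interval convolution of pointwise variance $\gtrsim\tau^{1-\beta/\alpha}$. Choosing $\tau\asymp\varepsilon^{2\alpha/(\alpha-\beta)}$ so that this variance is $\asymp\varepsilon^2$, a fourth–moment/Paley–Zygmund estimate (the fourth moment controlled again by \eqref{hypothesis2}) shows $u(t_{k+1},\mathbf x)$ leaves $[-\varepsilon,\varepsilon]$ with probability bounded below at each fixed $\mathbf x$; applying this simultaneously over a $\tau^{1/\alpha}$–separated family of points and estimating the joint anti–concentration — where the polynomial decay in \eqref{lambdabound} caps the number of \emph{effectively decorrelated} points at a value that is a minimum of two powers of $\tau^{-1}$ rather than the naïve $\tau^{-d/\alpha}$ — gives $P(\sup_{\mathbf x}|u(t_{k+1},\mathbf x)|\le\varepsilon\mid\mathcal{F}_{t_k})\le e^{-cM}$. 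Iterating over $\lfloor T/\tau\rfloor$ intervals produces $\mathbf{C}_2\exp(-\mathbf{C}_3T/\varepsilon^{\gamma})$; the $\wedge$ in (b) is exactly the above minimum, and the hypotheses $d=1$, $\alpha\ge2\beta$ of (a) single out the regime in which the first branch is binding, so that $\gamma=2(\alpha+\beta)/(\alpha-\beta)$.

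\emph{Lower bound (case (a), $\mathcal{D}<\mathcal{D}_0$).} This is the harder direction, because the periodic heat semigroup does not contract in sup–norm and one cannot restart the solution from $0$ at the grid times. I would use a Cameron–Martin change of measure: let $v$ be the mild solution of the equation obtained from \eqref{SHE} by adding the drift $-\kappa v$, which for a suitable $\kappa\asymp\varepsilon^{-2\alpha/(\alpha-\beta)}$ (up to logarithms) is strong enough to keep $\sup_{[0,T]\times\mathbb{T}^d}|v|\le\varepsilon$ with probability bounded below; by Girsanov, $P(\sup_{[0,T]\times\mathbb{T}^d}|u|\le\varepsilon)$ is then at least that probability times the negative exponential of a constant multiple of $\int_0^T\|\kappa v(s,\cdot)/\sigma(s,\cdot,v(s,\cdot))\|_{\mathcal{H}}^2\,ds$, where $\mathcal{H}$ is the Cameron–Martin space of $F$, with $\|h\|_{\mathcal{H}}^2\asymp\sum_{\mathbf n}|\widehat h(\mathbf n)|^2/\lambda(\mathbf n)$. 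Here \eqref{hypothesis1} with $\mathcal{D}<\mathcal{D}_0$ is used to ensure that dividing by $\sigma(v)$ keeps the shift in $\mathcal{H}$ with comparable norm; estimating $\mathbb{E}\|v(s,\cdot)\|_{\mathcal{H}}^2$ from the spectral decomposition and optimizing $\kappa$ yields the claimed $\mathbf{C}_0\exp(-\mathbf{C}_1T/\varepsilon^{2(2\alpha-\beta)/\beta})$, and the constraint $\alpha\ge2\beta$ is precisely what makes this lower bound no larger than the upper bound.

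\emph{Main obstacle.} I expect the sharp one–interval small–ball analysis for the colored noise to be the crux. Because the Riesz kernel decays only polynomially, the field $N(\tau,\cdot)$ carries long–range spatial correlations — its correlation coefficient at distance $r$ is of order $(\tau^{1/\alpha}/r)^{\beta}$ rather than being essentially supported on $|r|\lesssim\tau^{1/\alpha}$ — so the effective number of independent degrees of freedom is not $\tau^{-d/\alpha}$ and must be extracted from a careful entropy–number computation for the covariance operator; this mismatch is the reason the theorem is stated as a two–sided estimate, with the exponents agreeing only in the classical case $\alpha=2$, $\beta=d=1$. The second difficulty, specific to the lower bound, is controlling the Girsanov density uniformly over the long horizon $T$ for a solution whose coefficient $\sigma$ is merely Lipschitz — it is exactly to make the required drift admissible and cheap that one imposes $\mathcal{D}<\mathcal{D}_0$.
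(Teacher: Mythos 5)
Your overall architecture — discretize time at a scale chosen so that the one-interval variance of the noise term matches $\varepsilon^2$, iterate via the Markov property, and prove a single-interval anti-concentration estimate — matches the paper's reduction to Proposition \ref{prop}. Your bookkeeping of the exponents (the $\wedge$ in part (b), the identification $\gamma=2(\alpha+\beta)/(\alpha-\beta)$ when $d=1$, $\alpha\ge 2\beta$) is also correct. But both of your one-interval inputs differ materially from the paper, and at the points you yourself flag as ``the crux'' the sketch has real gaps.

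For the upper bound, you write that after a pointwise Paley--Zygmund estimate one should ``appl[y] this simultaneously over a $\tau^{1/\alpha}$-separated family of points and estimat[e] the joint anti-concentration,'' with the effective number of decorrelated points controlled by the Riesz decay. That joint estimate is precisely the hard part, and it is not a matter of counting points: because the Riesz kernel gives only polynomial decay, the grid Gaussians are far from independent and a union-style argument does not produce an exponential bound. The paper instead freezes $\sigma$ at $f_\varepsilon(u_0)$, passes to the Gaussian field $v_g$, and \emph{conditions sequentially} on the $\sigma$-algebras $\mathcal{G}_j$ generated by $N_\varepsilon$ at previously visited grid points; the decisive step is Lemma \ref{coeffbound}, which shows — via a precision-matrix argument, writing $\Sigma=DTD$ and bounding $\|\,\mathbf{I}-\mathbf T\,\|_{1,1}$ using Lemma \ref{varbound} — that the best-linear-prediction coefficients have $\ell^1$-norm $\le \tfrac12$, so the conditional variance of the newly revealed value is still $\gtrsim t_1^{(\alpha-\beta)/\alpha}$. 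This is what makes each conditional factor strictly less than one. It is also why $c_0$ must depend on $\varepsilon$ (condition \eqref{c0}), a feature your outline does not account for. Finally, the restriction $\mathcal{D}<\mathcal{D}_0$ is needed already for the upper bound (to absorb the difference term $D(t,\mathbf x)$ between $v$ and $v_g$ via Remark \ref{largeremark}); your proposal places that restriction only in the lower bound.

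For the lower bound, your plan — add a mean-reverting drift $-\kappa v$ and pay for it via Girsanov — is genuinely different from the paper's, and I do not think it closes. Two issues: (i) the shift $\kappa v/\sigma$ is random, so Novikov is not automatic and the needed Cameron--Martin membership $v(s,\cdot)\in\mathcal H$ with $\|h\|_{\mathcal H}^2\asymp\sum|\widehat h(\mathbf n)|^2/\lambda(\mathbf n)\asymp\|h\|_{H^{(d-\beta)/2}}^2$ is a positive-order Sobolev regularity requirement on the solution that has to be established and quantified; (ii) even granting admissibility, one must still lower-bound the small-ball probability of the \emph{drifted} field over the whole torus, which reintroduces exactly the joint-Gaussian small-ball problem you were trying to avoid. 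The paper takes a cleaner route: it uses a \emph{deterministic} Girsanov shift $f$, chosen so that $\int_{\mathbb{T}^d}f(t,\mathbf y)\Lambda(\mathbf x-\mathbf y)d\mathbf y=\bar p_t(u_0)(\mathbf x)/(t_1\sigma(t,\mathbf x))$, whose sole effect is to cancel the initial-condition drift (so $u=(1-t/t_1)\bar p_t(u_0)+\widetilde N$ vanishes at $t_1$), verifies Novikov by the explicit computation \eqref{novikov}, and then applies the Gaussian correlation inequality (Royen/Lata\l a--Matlak, Lemma \ref{Gaussiancorr}) to tensorize the small-ball estimate for $\widetilde N$ over the $\asymp(c_0\varepsilon^2)^{-d}$ spatial boxes, with Lemma \ref{larged} supplying the per-box bound. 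Your sketch does not invoke any analogue of the Gaussian correlation inequality, and without it the passage from one box to the whole torus is unsupported.
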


Here we make a couple of remarks. These could be of independent interests.
\begin{remark}
\begin{enumerate}
\item[(a)] When $\alpha=2\beta$ and $d=1$, both lower and upper bounds include a term $\varepsilon^{-6}$ in the exponent. However, there is a discrepancy between the exponents of $\varepsilon$ in other cases due to the fact that $c_0$ depends on $\varepsilon$, as defined in \eqref{c0}.
\item[(b)] If $\beta<\alpha<2\beta$ or $d>1$, we have a negative lower bound of $P(E_0)$ in \eqref{E0a} and \eqref{E0b}, which gives a trivial lower estimation in Theorem \ref{thm}(b). 
\end{enumerate}
\end{remark}

We outline the key steps in the proof of Theorem \ref{thm}. In \cite{athreya2021small}, the authors decomposed a one-dimensional spatial interval into small subintervals. Extending this approach to the $d$-dimensional setting, we instead partition the domain into a sequence of small boxes.

Concerning the temporal coefficient $c_0$ introduced in \cite{athreya2021small}, it was shown there to be uniformly bounded, leveraging the exponential decay of spatial correlations as the distance between two points increases. However, for spatially homogeneous noise, the decay of correlation is significantly slower. As a result, a uniform bound on $c_0$ is no longer feasible. Instead, we control $c_0$ in terms of $\varepsilon$, as specified in \eqref{c0}.

To establish the upper bound in Theorem \ref{thm}, we employ a perturbation argument that approximates the solution $u$ by a Gaussian random field within small regions. This is complemented by deriving bounds for a corresponding Gaussian process. For the lower bound, we utilize the Gaussian correlation inequality along with a change-of-measure technique inspired by \cite{athreya2021small}. Additionally, we demonstrate that the approximation error can be effectively controlled by appropriately choosing the time intervals over which the coefficients are frozen. A critical component of our analysis is the Markov property of \eqref{SHE} with respect to the time variable $t$. This property allows us to reduce the problem to studying the behavior of the non-Gaussian solution over small time intervals.

The remainder of the paper is organized as follows: In section \ref{keyprop}, we present Proposition \ref{prop} and its connection to Theorem \ref{thm}. Section \ref{estimates} provides several essential estimates, while section \ref{proofprop} contains the proof of Proposition \ref{prop}, thereby completing the paper.

Throughout this work, $C$ and $C'$ denote positive constants whose values may change between lines. Dependence of constants on specific parameters is explicitly indicated by listing the parameters in parentheses.

\section{Key Proposition}\label{keyprop}
We decompose $\left[-\frac{1}{2},\frac{1}{2}\right]^d$ into subintervals of length $\varepsilon^2$ in each coordinate, and divide $[0,T]$ into intervals of length $c_0\varepsilon^4$ where $c_0$ denotes the temporal coefficient satisfying 
\begin{equation}
\label{c0}
c_0=\mathcal{C}_0\varepsilon^{\frac{2\alpha d-4\beta}{\beta}},
\end{equation}
where $0<\mathcal{C}_0<\mathcal{C}$ and $\mathcal{C}$ is specified in the proof of Lemma \ref{coeffbound}. 
\begin{remark}
Unlike the space-time white noise case in \cite{athreya2021small}, $c_0$ needs to be selected depending on $\varepsilon$ due to the large correlation between pairs of points. $c_0$ does not appear in either the upper bound nor lower bound for small ball probabilities. 
\end{remark}

We define $t_i=ic_0\varepsilon^4,x_{j}=j\varepsilon^2$ where $i\in\N$ and $j\in\Z$, and
\begin{equation*}
n_1:=\min\left\lbrace n\in\Z:n\varepsilon^2>\frac{1}{2}\right\rbrace.
\end{equation*}
It is clear that $x_{n_1}>\frac{1}{2}$ and $x_{n_1-1}\leq \frac{1}{2}$, therefore, by symmetry, the point $(x_{j_1},x_{j_2},...,x_{j_d})$ lies in $\left[-\frac{1}{2},\frac{1}{2}\right]^d$ if
\begin{equation}
\label{jbound}
-n_1+1\leq j_k\leq n_1-1 \text{~for $k=1,2,...,d$}.
\end{equation}

We now consider a sequence of sets $R_{i,j}\subset \R\times\R^d$ as
\begin{equation}
\label{Pij}
R_{i,j}=\left\lbrace(t_i,x_{j_1},x_{j_2},...x_{j_d})\vert -n_1+1\leq j_k\leq j \text{~for~} k=1,2,...,d\right\rbrace,
\end{equation}
so that $R_{i,j}$ represents the set including all grid points within the box $\left[-\frac{1}{2},j\varepsilon^2\right]^d$ at time $t_i$. Below we have two important definitions using this set.

For $n\geq 0$, we define the following sequence of events to determine the upper bound in Theorem \ref{thm},
\begin{equation}
\label{Fn}
F_{n}=\left\lbrace\vert u(t,\textbf{x})\vert\leq t_1^{\frac{\alpha-\beta}{2\alpha}}\text{~for all $(t,\textbf{x})\in R_{n,n_1-1}$}\right\rbrace,
\end{equation}
which is the event that the magnitude of the solution is small (recall that $t_1=c_0\varepsilon^4$) at time $t_n$ for each grid point within $\left[-\frac{1}{2},\frac{1}{2}\right]^d$.

In addition, let $E_{-1}=\Omega$. For $n\geq 0$, we define the following sequence of events to determine the lower bound in Theorem \ref{thm},
\begin{equation}
\label{En}
E_{n}=\left\lbrace\vert u(t_{n+1},\textbf{x})\vert\leq \frac{\mathcal{C}_3}{3}t_1^{\frac{\alpha-\beta}{2\alpha}}\text{,~and~} \vert u(t,\textbf{x})\vert\leq \mathcal{C}_3t_1^{\frac{\alpha-\beta}{2\alpha}}\text{~for all~} t\in[t_n,t_{n+1}],\textbf{x}\in\mathbb{T}^d\right\rbrace,
\end{equation}
which is the event that the magnitude of the solution is small at any time in $[t_n,t_{n+1}]$ for any points within $[-\frac{1}{2},\frac{1}{2}]^d$ and even smaller at time $t_{n+1}$. $\mathcal{C}_3$ is a positive constant such that
\begin{equation}
\label{c3}
\mathcal{C}_3>6\mathcal{C}_2\sqrt{\frac{\ln C_5}{C_6}},
\end{equation}
where $\mathcal{C}_2$ is the uniform bound of $\sigma$ in \eqref{hypothesis2} and $C_5,C_6$ are positive constants defined in Lemma \eqref{larged}.

\begin{proposition}
\label{prop}
Consider the solution to \eqref{SHE} with $u_0(\textbf{x})\equiv 0$, $0<2\beta\leq\alpha$ and $d=1$. Then, there exist positive constants $\textbf{C}_4,\textbf{C}_5,\textbf{C}_6,\textbf{C}_7,\mathcal{D}_0$ and $\varepsilon_1$ depending solely on $\mathcal{C}_1$, $\mathcal{C}_2$, $\alpha$, $\beta$, and $d$ such that for any $0<\varepsilon<\varepsilon_1$ and $\mathcal{D}<\mathcal{D}_0$ in \eqref{hypothesis1}, we have
\begin{enumerate}
\item[(a)]
\begin{equation*}
P\left(F_n\bigg| \bigcap_{k=0}^{n-1}F_k\right)\leq\textbf{C}_4\exp\left(-\frac{\textbf{C}_5}{t_1^{\frac{\beta}{\alpha}}}\right),
\end{equation*}
\item[(b)] 
\begin{equation*}
P\left(E_n\bigg| \bigcap_{k=-1}^{n-1}E_k\right)\geq \textbf{C}_6\exp\left(-\frac{\textbf{C}_7}{t_1^{1-\beta/\alpha}}\right).
\end{equation*}
\end{enumerate}
\end{proposition}
We then demonstrate how Theorem \ref{thm}(a) results from Proposition \ref{prop}.

\textbf{Proof of Theorem \ref{thm}(a)}
\begin{proof}
The event $F_{n}$ in $\eqref{Fn}$ deals with the behavior of $u(t,\textbf{x})$ at time $t_n$, thus combining these events together indicates
\[F:=\bigcap_{n=0}^{\left\lfloor\frac{T}{t_1}\right\rfloor}F_{n}\supset \left\lbrace\vert u(t,\textbf{x})\vert\leq t_1^{\frac{\alpha-\beta}{2\alpha}}, t\in[0,T],\textbf{x}\in\mathbb{T}^d\right\rbrace,\]
and
\[P(F)=P\left(\bigcap_{n=0}^{\left\lfloor\frac{T}{t_1}\right\rfloor}F_{n}\right)=P(F_{0})\prod_{n=1}^{\left\lfloor\frac{T}{t_1}\right\rfloor}P\left(F_{n}\bigg| \bigcap_{k=0}^{n-1}F_{k}\right).\]

With $u_0(\textbf{x})\equiv 0$, $F_0=\Omega$, and Proposition \ref{prop}(a) immediately yields 
\begin{align*}
P(F)&\leq \left[\textbf{C}_4\exp\left(-\frac{\textbf{C}_5}{t_1^{\frac{\beta}{\alpha}}}\right)\right]^{\left\lfloor\frac{T}{t_1}\right\rfloor}\leq \textbf{C}_2\exp\left(-\frac{\textbf{C}_3T}{t_1^{\frac{\alpha+\beta}{\alpha}}}\right).
\end{align*}
Therefore we have
\[
P\left(\left\lbrace\vert u(t,\textbf{x})\vert\leq t_1^\frac{\alpha-\beta}{2\alpha}, t\in[0,T],\textbf{x}\in\mathbb{T}^d\right\rbrace\right)\leq\textbf{C}_2\exp\left(-\frac{\textbf{C}_3T}{t_1^{\frac{\alpha+\beta}{\alpha}}}\right),
\]
and replacing $t_1^\frac{\alpha-\beta}{2\alpha}$ with $\varepsilon_2$ gives
\begin{equation}
\label{upper2}
P\left(\left\lbrace\vert u(t,\textbf{x})\vert\leq \varepsilon_2, t\in[0,T],\textbf{x}\in\mathbb{T}^d\right\rbrace\right)\leq\textbf{C}_2\exp\left(-\frac{\textbf{C}_3T}{\varepsilon_2^{\frac{2(\alpha+\beta)}{(\alpha-\beta)}}}\right).
\end{equation}

We derive that, from \eqref{c0},
\[
\varepsilon_2=t_1^\frac{\alpha-\beta}{2\alpha}=\left(c_0\varepsilon^4\right)^\frac{\alpha-\beta}{2\alpha}=\left(\mathcal{C}_0\varepsilon^{\frac{2\alpha d}{\beta}}\right)^\frac{\alpha-\beta}{2\alpha}<\left(\mathcal{C}_0\varepsilon_1^{\frac{2\alpha d}{\beta}}\right)^\frac{\alpha-\beta}{2\alpha},
\]
so \eqref{upper2} provides the upper bound in Theorem \ref{thm}(a) whenever $\varepsilon_0<\left(\mathcal{C}_0\varepsilon_1^{\frac{2\alpha d}{\beta}}\right)^\frac{\alpha-\beta}{2\alpha}$. 

On the other hand, the event $E_{n}$ in $\eqref{En}$ deals with the behavior of $u(t,\textbf{x})$ at any time between $t_n$ and $t_{n+1}$, thus combining these events together indicates
\[E:=\bigcap_{n=-1}^{\left\lfloor\frac{T}{t_1}\right\rfloor}E_{n}\subset \left\lbrace\vert u(t,\textbf{x})\vert\leq \mathcal{C}_3t_1^{\frac{\alpha-\beta}{2\alpha}}, t\in[0,T],\textbf{x}\in\mathbb{T}^d\right\rbrace,\]
and
\[P(E)=P\left(\bigcap_{n=-1}^{\left\lfloor\frac{T}{t_1}\right\rfloor}E_{n}\right)=P(E_{-1})\prod_{n=0}^{\left\lfloor\frac{T}{t_1}\right\rfloor}P\left(E_{n}\bigg| \bigcap_{k=-1}^{n-1}E_{k}\right).\]
With $u_0(\textbf{x})\equiv 0$ and $E_{-1}=\Omega$, Proposition \ref{prop}(b) immediately yields
\[P(E)\geq \left[\textbf{C}_6\exp\left(-\frac{\textbf{C}_7}{t_1^{1-\beta/\alpha}}\right)\right]^{\left\lfloor\frac{T}{t_1}\right\rfloor+1}\geq \textbf{C}_0\exp\left(-\frac{\textbf{C}_1T}{t_1^{2-\beta/\alpha}}\right).\]
Therefore we have
\[
P\left(\left\lbrace\vert u(t,\textbf{x})\vert\leq \mathcal{C}_3t_1^{\frac{\alpha-\beta}{2\alpha}}, t\in[0,T],\textbf{x}\in\mathbb{T}^d\right\rbrace\right)\geq\textbf{C}_0\exp\left(-\frac{\textbf{C}_1T}{t_1^{2-\beta/\alpha}}\right),
\]
and replacing $\mathcal{C}_3t_1^{\frac{\alpha-\beta}{2\alpha}}$ with $\varepsilon_3$ gives
\begin{equation}
\label{lower2}
P\left(\left\lbrace\vert u(t,\textbf{x})\vert\leq \varepsilon_3, t\in[0,T],\textbf{x}\in\mathbb{T}^d\right\rbrace\right)\geq\textbf{C}_0\exp\left(-\frac{\textbf{C}_1T}{\varepsilon_3^{\frac{4\alpha-2\beta}{\alpha-\beta}}}\right).
\end{equation}
We derive that, from \eqref{c0},
\[
\varepsilon_3=\mathcal{C}_3t_1^\frac{\alpha-\beta}{2\alpha}=\mathcal{C}_3\left(c_0\varepsilon^4\right)^\frac{\alpha-\beta}{2\alpha}=\mathcal{C}_3\left(\mathcal{C}_0\varepsilon^{\frac{2\alpha d}{\beta}}\right)^\frac{\alpha-\beta}{2\alpha}<\mathcal{C}_3\left(\mathcal{C}_0\varepsilon_1^{\frac{2\alpha d}{\beta}}\right)^\frac{\alpha-\beta}{2\alpha},
\]
so \eqref{lower2} provides the lower bound in Theorem \ref{thm}(a) whenever $\varepsilon_0<\mathcal{C}_3\left(\mathcal{C}_0\varepsilon_1^{\frac{2\alpha d}{\beta}}\right)^\frac{\alpha-\beta}{2\alpha}$. 

With such an $\varepsilon_0<\left(\mathcal{C}_0\varepsilon_1^{\frac{2\alpha d}{\beta}}\right)^\frac{\alpha-\beta}{2\alpha}\cdot \min\left\lbrace\mathcal{C}_3,1\right\rbrace$, we conclude Theorem \ref{thm}(a) from Proposition \ref{prop}, and with \eqref{F1}, Theorem \ref{thm}(b) follows in a similar way. The rest of this paper is devoted to the proof of Proposition \ref{prop}.
\end{proof}

\section{Preliminaries}
\label{estimates}
In this section, we provide some preliminary results that are used to prove Proposition \ref{prop}.

\subsection{Heat Kernel Estimates}
For $\textbf{x}\in \mathbb{R}^d$, the fundamental solution $p(t,\textbf{x})$ to the equation $\partial_tp(t,\textbf{x})=-(-\Delta)^{\alpha/2}p(t,\textbf{x})$ on $\mathbb{R}^d$ does not have a closed form unless $\alpha=2$, yet it is a smooth function determined by its Fourier transform $\hat{p}(t,\nu)$ in $\textbf{x}$, i.e.,
\begin{equation*}
\hat{p}(t,\nu):=\int_{\R^d}p(t,\textbf{x})\exp(-2\pi i\nu\cdot \textbf{x})d\textbf{x}=\exp\left(-t(2\pi\vert \nu\vert)^\alpha\right),\quad \nu\in\R^d.
\end{equation*}
For $\textbf{x}\in \mathbb{T}^d$, from the standard Fourier decomposition, we use $\bar{p}(t,\textbf{x})$ to denote the fundamental solution on $\mathbb{T}^d$ in \eqref{equation1} as
\begin{equation}
\label{pfourier}
\bar{p}(t,\textbf{x})=\sum_{\textbf{n}\in\mathbb{Z}^d}\exp\left(-(2\pi\vert \textbf{n}\vert)^\alpha t\right)\exp(2\pi i\textbf{n}\cdot \textbf{x}),
\end{equation}
and the summation converges in $L^2$ sense.

The following lemma gives two estimates on $\bar{p}(t,\textbf{x})$, which are similar to Lemma $2.1$ and Lemma $2.2$ in \cite{li2017holder}.

\begin{lemma}\label{pdiff} For all $t\geq s>0$ and $\textbf{x}\in \mathbb{T}^d$, there exist constants $C,C'>0$ depending entirely on $\alpha,d$ such that
\begin{equation}
\label{pdiffspace}
\int_{\mathbb{T}^d}\vert \bar{p}(t,\textbf{y}-\textbf{x})-\bar{p}(t,\textbf{y})\vert d\textbf{y}\leq \frac{C\vert \textbf{x}\vert}{t^{1/\alpha}}\wedge 2,
\end{equation}
\begin{equation}
\label{pdifftime}
\int_{\mathbb{T}^d}\vert \bar{p}(t,\textbf{x})-\bar{p}(s,\textbf{x})\vert d\textbf{x}\leq C'\log\left(\frac{t}{s}\right)\wedge 2.
\end{equation}
\end{lemma}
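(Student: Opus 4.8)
The strategy is to transfer both integrals to $\R^d$ by periodization and then exploit the exact scaling of the whole-space kernel. By Poisson summation (equivalently, by matching Fourier coefficients with \eqref{pfourier}) one has, for $t>0$ and ${\bf x}\in\mathbb{T}^d$,
$$\bar p(t,{\bf x})=\sum_{{\bf k}\in\Z^d}p(t,{\bf x}+2{\bf k}).$$
Since the translates $\{[-1,1]^d+2{\bf k}:{\bf k}\in\Z^d\}$ tile $\R^d$ up to a null set, the triangle inequality together with Tonelli's theorem gives
$$\int_{\mathbb{T}^d}\vert\bar p(t,{\bf y}-{\bf x})-\bar p(t,{\bf y})\vert\,d{\bf y}\le\int_{\R^d}\vert p(t,{\bf z}-{\bf x})-p(t,{\bf z})\vert\,d{\bf z},$$
and likewise $\int_{\mathbb{T}^d}\vert\bar p(t,{\bf x})-\bar p(s,{\bf x})\vert\,d{\bf x}\le\int_{\R^d}\vert p(t,{\bf z})-p(s,{\bf z})\vert\,d{\bf z}$. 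Hence it suffices to prove the two bounds on $\R^d$ with $p$ in place of $\bar p$.

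Next I would record the self-similarity $p(t,{\bf x})=t^{-d/\alpha}p(1,t^{-1/\alpha}{\bf x})$. Combined with the classical pointwise estimates for the isotropic $\alpha$-stable density $p(1,\cdot)$ and for its space and time derivatives (for $\alpha=2$ these are immediate from the Gaussian formula), this produces a constant $C=C(\alpha,d)$ so that for all $t>0$,
$$\int_{\R^d}p(t,{\bf z})\,d{\bf z}=1,\qquad\|\nabla_{\bf x}p(t,\cdot)\|_{L^1(\R^d)}\le Ct^{-1/\alpha},\qquad\|\partial_t p(t,\cdot)\|_{L^1(\R^d)}\le Ct^{-1}.$$
The last two estimates amount to the statement that $\nabla p(1,\cdot)$ and $\partial_\tau p(\tau,\cdot)$ evaluated at $\tau=1$ lie in $L^1(\R^d)$, after which the powers of $t$ fall out of the scaling identity.

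For \eqref{pdiffspace} I would write $p(t,{\bf z}-{\bf x})-p(t,{\bf z})=-\int_0^1{\bf x}\cdot\nabla p(t,{\bf z}-r{\bf x})\,dr$; taking absolute values, integrating in ${\bf z}$ over $\R^d$, and using Fubini and translation invariance bounds the left side by $\vert{\bf x}\vert\,\|\nabla p(t,\cdot)\|_{L^1(\R^d)}\le C\vert{\bf x}\vert t^{-1/\alpha}$, while the triangle inequality gives the crude bound $2\int_{\R^d}p(t,{\bf z})\,d{\bf z}=2$; taking the smaller of the two yields $C(\vert{\bf x}\vert t^{-1/\alpha}\wedge1)$. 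For \eqref{pdifftime} I would write $p(t,{\bf z})-p(s,{\bf z})=\int_s^t\partial_r p(r,{\bf z})\,dr$; taking absolute values, integrating in ${\bf z}$, and using Fubini bounds the left side by $\int_s^t\|\partial_r p(r,\cdot)\|_{L^1(\R^d)}\,dr\le C\int_s^t r^{-1}\,dr=C\log(t/s)$, and the crude bound $2$ again applies, giving $C(\log(t/s)\wedge1)$.

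The one ingredient that is not entirely elementary is the $L^1(\R^d)$-integrability of $\nabla p(1,\cdot)$ and of the time derivative $\partial_\tau p(\tau,\cdot)|_{\tau=1}$ in the range $1<\alpha<2$; this rests on the standard decay bounds for the stable kernel and its space/time derivatives (as in Lemmas 2.1 and 2.2 of \cite{li2017holder}). Everything else — the periodization identity, the two applications of the fundamental theorem of calculus, and the interchanges of sum/integral and derivative/integral — is routine, since for every fixed $t>0$ the function $p(t,\cdot)$ together with all of its derivatives is smooth and rapidly decaying.
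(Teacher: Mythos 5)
Your argument is correct and follows essentially the same route as the paper's: periodize $\bar p$ to the whole-space kernel $p$, then apply the mean value theorem in space (resp.\ the fundamental theorem of calculus in time) and control the result via decay of the derivatives of the stable kernel, finally intersecting with the trivial bound $2$. The only organizational difference is that you invoke the self-similarity $p(t,{\bf x})=t^{-d/\alpha}p(1,t^{-1/\alpha}{\bf x})$ to reduce to the $L^1(\R^d)$-integrability of $\nabla p(1,\cdot)$ and $\partial_t p(1,\cdot)$, whereas the paper substitutes the explicit pointwise bounds of Bogdan et al.\ and V\'azquez at time $t$ and integrates in polar coordinates; both versions rest on the same underlying decay estimates.
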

\begin{proof}
We begin with inequality \eqref{pdiffspace},
\begin{equation}
\begin{split}
\label{pdiffproof}
\int_{\mathbb{T}^d}\vert \bar{p}(t,{\textbf{y}-\textbf{x}})-\bar{p}(t,\textbf{y})\vert d\textbf{y}&=\int_{\mathbb{T}^d}\left\vert \sum_{\textbf{n}\in \Z^d}\left[p(t,\textbf{y}-\textbf{x}+\textbf{n})-p(t,\textbf{y}+\textbf{n})\right]\right\vert d\textbf{y}\\
&\leq \int_{\mathbb{T}^d}\sum_{\textbf{n}\in \Z^d}\left\vert p(t,\textbf{y}-\textbf{x}+\textbf{n})-p(t,\textbf{y}+\textbf{n})\right\vert d\textbf{y}\\
&=\int_{\R^d}\vert p(t,\textbf{y}-\textbf{x})-p(t,\textbf{y})\vert d\textbf{y}\\
&\leq \int_{\R^d} \vert \textbf{x}\vert\cdot\sup_{c\in[0,1]}\left\vert\nabla_{\textbf{y}} p(t,\textbf{y}-c\textbf{x})\right\vert d\textbf{y},
\end{split}
\end{equation}
where we use the Mean Value Theorem for the last inequality.

Using the inequality from Lemma 3 and Lemma 5 in \cite{bogdan2007estimates} along with equation $(2.3)$ from \cite{jakubowski2016stable}, we obtain the following bound:
\begin{equation}
\label{pgradiant}
\left\vert\nabla_{\textbf{z}} p(t,\textbf{z})\right\vert \leq C(d,\alpha)\vert \textbf{z}\vert\left(\frac{t}{\vert \textbf{z}\vert^{d+2+\alpha}}\wedge t^{-(d+2)/\alpha}\right)\leq C(d,\alpha)\frac{t\vert \textbf{z}\vert}{(t^{1/\alpha}+\vert \textbf{z}\vert)^{d+2+\alpha}}.
\end{equation}
Plugging \eqref{pgradiant} in \eqref{pdiffproof} yields
\begin{equation*}
\begin{split}
\int_{\mathbb{T}^d}\vert \bar{p}(t,{\textbf{y}-\textbf{x}})-\bar{p}(t,\textbf{y})\vert d\textbf{y}&\leq C(d,\alpha)\vert \textbf{x}\vert\int_{\R^d}\frac{t\vert \textbf{y}\vert}{(t^{1/\alpha}+\vert \textbf{y}\vert)^{d+2+\alpha}}d\textbf{y}\\
&= C(d,\alpha)\vert \textbf{x}\vert\int_0^{\infty}\frac{tx}{(t^{1/\alpha}+x)^{d+2+\alpha}}x^{d-1}dx\\
&=\frac{C(d,\alpha)\vert \textbf{x}\vert}{t^{1/\alpha}}\int_0^{\infty}\frac{w^d}{(1+w)^{d+2+\alpha}}dw\\
&=\frac{C(d,\alpha)\vert \textbf{x}\vert}{t^{1/\alpha}}.
\end{split}
\end{equation*} 
Clearly, $\int_{\mathbb{T}^d}\vert \bar{p}(t,{\textbf{y}-\textbf{x}})-\bar{p}(t,\textbf{y})\vert d\textbf{y}\leq 2$, so that \eqref{pdiffspace} follows. 

For inequality \eqref{pdifftime}, we have
\begin{equation}
\label{pdiffproof2}
\begin{split}
\int_{\mathbb{T}^d}\vert \bar{p}(t,{\textbf{x}})-\bar{p}(s,\textbf{x})\vert d\textbf{x}&=\int_{\mathbb{T}^d}\left\vert \sum_{\textbf{n}\in \Z^d}\left[p(t,\textbf{x}+\textbf{n})-p(s,\textbf{x}+\textbf{n})\right]\right\vert d\textbf{x}\\
&\leq \int_{\mathbb{T}^d}\sum_{\textbf{n}\in \Z^d}\left\vert p(t,\textbf{x}+\textbf{n})-p(s,\textbf{x}+\textbf{n})\right\vert d\textbf{x}\\
&=\int_{\R^d}\vert p(t,\textbf{x})-p(s,\textbf{x})\vert d\textbf{x}\\
&=\int_{\R^d}\int_s^t\vert\partial_r p(r,\textbf{x})\vert drd\textbf{x}.
\end{split}
\end{equation}
Proposition $2.1$ in \cite{vazquez2017classical} provides an estimate for the kernel $A:=(-\Delta)^{\sigma/2}p$ in terms of the $\sigma$-parabolic "distance" $\vert Y\vert_\sigma:=\left(\vert\textbf{x}\vert^2 + \vert t\vert^{2/\sigma}\right)^{1/2}$. Specifically, it establishes that
\begin{equation}
\label{ptimegradient}
\vert(-\Delta)^{\alpha/2} p(r,\textbf{x})\vert = \vert A(r,\textbf{x})\vert\leq \frac{C(d,\alpha)}{(r^{2/\alpha}+\vert \textbf{x}\vert^2)^{\frac{d+\alpha}{2}}},
\end{equation}
where $C(d,\alpha)$ is a constant depending on the dimension $d$ and and parameter $\alpha$. Hence, applying \eqref{equation1}, \eqref{ptimegradient} and Fubini's theorem to \eqref{pdiffproof2} yields
\begin{align*}
\int_{\mathbb{T}^d}\vert \bar{p}(t,{\textbf{x}})-\bar{p}(s,\textbf{x})\vert d\textbf{x}&\leq \int_{\R^d}\int_s^t\vert\partial_r p(r,\textbf{x})\vert drd\textbf{x}\\
&=\int_{\R^d}\int_s^t\vert(-\Delta)^{\alpha/2} p(r,\textbf{x})\vert drd\textbf{x}\\
&\leq C(d,\alpha)\int_{\R^d}\int_s^t\frac{1}{(r^{2/\alpha}+\vert \textbf{x}\vert^2)^{\frac{d+\alpha}{2}}} drd\textbf{x}\\
&=C(d,\alpha)\int_s^t\int_{\R^d} \frac{1}{(r^{2/\alpha}+\vert \textbf{x}\vert^2)^{\frac{d+\alpha}{2}}} d\textbf{x}dr\\
&=C(d,\alpha)\int_s^t\int_0^\infty \frac{x^{d-1}}{(r^{2/\alpha}+x^2)^{\frac{d+\alpha}{2}}} dxdr\\
&=C(d,\alpha)\int_s^t\frac{dr}{r}\int_0^\infty \frac{w^{d-1}}{(1+w^2)^{\frac{d+\alpha}{2}}} dw\\
&=C(d,\alpha)\left(\log(t)-\log(s)\right).
\end{align*} 
Similarly, $\int_{\mathbb{T}^d}\vert \bar{p}(t,{\textbf{x}})-\bar{p}(s,\textbf{x})\vert d\textbf{x}\leq 2$, so that \eqref{pdifftime} follows.
\end{proof}

\subsection{Noise Term Estimates}
We denote the second integral of \eqref{mild}, i.e., the noise term, by
\begin{equation}
\label{noiseterm}
N(t,\textbf{x}):=\int_{[0,t]\times\mathbb{T}^d} \bar{p}(t-s,\textbf{x}-\textbf{y})\sigma(s,\textbf{y},u(s,\textbf{y}))F(dsd\textbf{y})
\end{equation}
whose regularities will be estimated in the following two lemmas.

\begin{lemma}\label{spatialregularity} There exists a constant $C(d,\alpha,\beta)>0$ independent of $\mathcal{C}_2$ in \eqref{hypothesis2} such that for any $\xi\in\left(0,\frac{1}{\alpha}\wedge\frac{\alpha-\beta}{\alpha}\right)$, $t\in[0,1]$ and $\textbf{x},\textbf{y}\in \mathbb{T}^d$, we have
\begin{equation*}
\mathbb{E}\left[(N(t,\textbf{x})-N(t,\textbf{y}))^2\right]\leq C(d,\alpha,\beta)\mathcal{C}_2^2\vert \textbf{x}-\textbf{y}\vert^{\alpha\xi}.
\end{equation*}
\end{lemma}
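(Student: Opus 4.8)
The plan is to pass to the Walsh isometry, discard the coefficient $\sigma$ via \eqref{hypothesis2}, and then bound a \emph{deterministic} Riesz‑weighted space--time integral of heat‑kernel increments using Lemma \ref{pdiff}. Write $r=t-s$ and $v_r({\bf w}):=\bar p(r,{\bf x}-{\bf w})-\bar p(r,{\bf y}-{\bf w})$, so that $N(t,{\bf x})-N(t,{\bf y})=\int_{[0,t]\times\mathbb{T}^d}v_{t-s}({\bf w})\,\sigma(s,{\bf w},u(s,{\bf w}))\,F(ds\,d{\bf w})$. The Walsh--Dalang isometry (the defining property of the stochastic integral, valid under \eqref{Lambda}) gives
\begin{equation*}
\mathbb{E}\big[(N(t,{\bf x})-N(t,{\bf y}))^2\big]=\mathbb{E}\int_0^t\!\int_{\mathbb{T}^d}\!\int_{\mathbb{T}^d}v_{t-s}({\bf w})\,v_{t-s}({\bf z})\,\sigma(s,{\bf w},u(s,{\bf w}))\,\sigma(s,{\bf z},u(s,{\bf z}))\,\Lambda({\bf w}-{\bf z})\,d{\bf w}\,d{\bf z}\,ds.
\end{equation*}
Since $\Lambda\geq 0$ as a generalized function and $|\sigma|\leq\mathcal{C}_2$ by \eqref{hypothesis2}, the integrand is pointwise dominated by the deterministic quantity $\mathcal{C}_2^2\,|v_{t-s}({\bf w})|\,|v_{t-s}({\bf z})|\,\Lambda({\bf w}-{\bf z})$, whence $\mathbb{E}[(N(t,{\bf x})-N(t,{\bf y}))^2]\leq \mathcal{C}_2^2\int_0^t\langle|v_r|,|v_r|\rangle_{\mathcal H}\,dr$, where $\langle f,g\rangle_{\mathcal H}:=\int_{\mathbb{T}^d}\int_{\mathbb{T}^d}f({\bf w})g({\bf z})\Lambda({\bf w}-{\bf z})\,d{\bf w}\,d{\bf z}$ and, for the Riesz kernel, $\Lambda({\bf x})=|{\bf x}|^{-\beta}$ with $0<\beta<d$.

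Next is the pointwise‑in‑$r$ estimate. Because $0<\beta<d<2d$, the exponent $p:=\frac{2d}{2d-\beta}$ lies in $(1,2)$, so the Hardy--Littlewood--Sobolev inequality gives $\langle|v_r|,|v_r|\rangle_{\mathcal H}\leq C\|v_r\|_{L^p(\mathbb{T}^d)}^2$, and interpolating $L^p$ between $L^1$ and $L^\infty$ yields $\langle|v_r|,|v_r|\rangle_{\mathcal H}\leq C\|v_r\|_{L^1(\mathbb{T}^d)}^{2-\beta/d}\,\|v_r\|_{L^\infty(\mathbb{T}^d)}^{\beta/d}$. Now \eqref{pdiffspace} gives $\|v_r\|_{L^1}\leq C\big(|{\bf x}-{\bf y}|\,r^{-1/\alpha}\wedge 1\big)$, while $\|v_r\|_{L^\infty}\leq 2\|\bar p(r,\cdot)\|_{L^\infty}$ and, by \eqref{pfourier}, $\|\bar p(r,\cdot)\|_{L^\infty}\leq 2^{-d}\sum_{{\bf n}}e^{-\pi^\alpha|{\bf n}|^\alpha r}\leq C r^{-d/\alpha}$ for $r\in(0,1]$. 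Combining, $\langle|v_r|,|v_r|\rangle_{\mathcal H}\leq C\,r^{-\beta/\alpha}\big(|{\bf x}-{\bf y}|\,r^{-1/\alpha}\wedge 1\big)^{2-\beta/d}$.

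Finally, integrate in $r$. With $\delta:=|{\bf x}-{\bf y}|$ and $t\leq 1$, split $\int_0^t$ at $r=\delta^\alpha$ (the case $\delta^\alpha\geq t$ being even simpler). On $\{r\leq\delta^\alpha\}$ the minimum equals $1$ and $\int_0^{\delta^\alpha}r^{-\beta/\alpha}\,dr=C\delta^{\alpha-\beta}$, using $\beta<\alpha$. On $\{r\geq\delta^\alpha\}$ the minimum equals $\delta\,r^{-1/\alpha}$, so the remaining integral is $C\delta^{2-\beta/d}\int_{\delta^\alpha}^t r^{-\gamma}\,dr$ with $\gamma=\tfrac1\alpha\big(2+\beta\tfrac{d-1}{d}\big)\geq\tfrac2\alpha\geq1$; this is dominated by its lower endpoint and contributes $C\delta^{2-\beta/d+\alpha(1-\gamma)}=C\delta^{\alpha-\beta}$ as well (only in the borderline case $d=1,\alpha=2$ is $\gamma=1$, giving a harmless logarithmic factor). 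Hence $\mathbb{E}[(N(t,{\bf x})-N(t,{\bf y}))^2]\leq C\mathcal{C}_2^2\,|{\bf x}-{\bf y}|^{\alpha-\beta}$, and since $\mathbb{T}^d$ is bounded and $\alpha\xi<\alpha-\beta$ for every $\xi<\tfrac1\alpha\wedge\tfrac{\alpha-\beta}{\alpha}$, this implies the stated bound $C\mathcal{C}_2^2|{\bf x}-{\bf y}|^{\alpha\xi}$ (the restriction $\xi<1/\alpha$ is not needed here, only $\alpha\xi<\alpha-\beta$).

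The main obstacle is the pointwise estimate in the second paragraph: one must use the local integrability $\beta<d$ of the spatial correlation (equivalently, the decay of $\lambda({\bf n})$ in \eqref{lambdabound}) \emph{and} the sharp $L^1$ increment bound \eqref{pdiffspace} at the same time. Any cruder route fails to reach the exponent $\tfrac{\alpha-\beta}{\alpha}$: bounding $\langle|v_r|,|v_r|\rangle_{\mathcal H}$ by a bounded Fourier multiplier times $\|v_r\|_{L^2}^2$ produces an $r$‑integral that diverges once $d\geq\alpha$, while estimating directly on the Fourier side via $|e^{\pi i{\bf n}\cdot({\bf x}-{\bf y})}-1|^2\leq C(|{\bf n}|\,|{\bf x}-{\bf y}|)^{2\alpha\xi}$ forces the strictly smaller Hölder exponent $\tfrac{\alpha-\beta}{2\alpha}$. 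Getting the $L^1$--$L^\infty$ interpolation for $v_r$ and then tracking the exponents through the $r$‑integral is the whole of the argument.
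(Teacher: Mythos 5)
Your proof is correct, but it follows a genuinely different route from the paper's. After the common opening (Walsh isometry, drop $\sigma$ using \eqref{hypothesis2}), the paper proceeds asymmetrically: it bounds one factor $|K({\bf w})|$ by the \emph{sum} $\bar p(t-s,{\bf x-w})+\bar p(t-s,{\bf y-w})$ of the two un-differenced heat kernels, computes $\int_{\mathbb{T}^d}\bar p(t-s,\cdot)*\Lambda \lesssim (t-s)^{-\beta/\alpha}$ in Fourier, and then applies the $L^1$ increment bound \eqref{pdiffspace} to the single remaining factor $\int|K({\bf z})|\,d{\bf z}$, closing the time integral via $1\wedge x\le x^{\alpha\xi}$ with $\xi<\frac1\alpha\wedge\frac{\alpha-\beta}{\alpha}$. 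You instead treat the two factors symmetrically: Hardy--Littlewood--Sobolev with $p=\frac{2d}{2d-\beta}\in(1,2)$ reduces the Riesz-weighted quadratic form to $\|v_r\|_{L^p}^2$, and $L^1$--$L^\infty$ interpolation plus the on-diagonal heat-kernel bound $\|\bar p(r,\cdot)\|_\infty\lesssim r^{-d/\alpha}$ turns this into $r^{-\beta/\alpha}\big(|{\bf x-y}|r^{-1/\alpha}\wedge1\big)^{2-\beta/d}$; the exponent arithmetic after the dyadic split at $r=|{\bf x-y}|^\alpha$ then closes, with $\gamma=\frac1\alpha\big(2+\beta\frac{d-1}{d}\big)\ge\frac2\alpha\ge1$ so the upper limit never dominates (a log only in the corner $d=1,\alpha=2$). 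Because you use the increment bound with power $2-\beta/d>1$ rather than $1$, you obtain the sharper exponent $\alpha-\beta$ directly, without the $\wedge\,1$ cap that the paper's $\xi<1/\alpha$ condition reflects; the lemma's stated $|{\bf x-y}|^{\alpha\xi}$ bound then follows trivially on the bounded domain $\mathbb{T}^d$. The trade-off is that your argument leans on HLS, while the paper stays entirely within Fourier manipulations and the elementary inequality $1\wedge x<x^{\alpha\xi}$, mirroring the technique it also uses for the time regularity in Lemma \ref{timeregularity}. Both are correct; yours is strictly stronger and would in fact let the paper drop the $1/\alpha$ restriction on $\xi$ in this lemma (and, correspondingly, in \eqref{spacep}).
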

\begin{proof}
To simplify our computations, we fix $t,\textbf{x}$ and $\textbf{y}$, and define, for $0<s<t$,
\[K_s(\textbf{z}):=\bar{p}(t-s,\textbf{x}-\textbf{z})-\bar{p}(t-s,\textbf{y}-\textbf{z}).\]

By a similar argument as on page 2871 of \cite{bezdek2016weak}, we use Burkholder's inequality, the Cauchy-Schwarz inequality and take the absolute value inside the integral and get, for the case $k=2$,
\begin{equation}
\label{spatialregularityproof}
\begin{split}
&\mathbb{E}\left[(N(t,\textbf{x})-N(t,\textbf{y}))^2\right]\\
&=\mathbb{E}\left[\left(\int_{[0,t]\times\mathbb{T}^d}K_s(\textbf{z})\sigma(s,\textbf{z},u(s,\textbf{z}))F(dsd\textbf{z})\right)^2\right]\\
&\leq C\mathbb{E}\left\vert\int_0^t\int_{\mathbb{T}^d}\int_{\mathbb{T}^d}K_s(\textbf{z})K_s(\textbf{w})\Lambda(\textbf{w}-\textbf{z})\sigma(s,\textbf{z},u(s,\textbf{z}))\sigma(s,\textbf{w},u(s,\textbf{w}))d\textbf{w}d\textbf{z}ds\right\vert\\
&\leq C\sup_{r,\textbf{u}}\mathbb{E}\left[\sigma(r,\textbf{u},u(r,\textbf{u}))^2\right]\int_0^t\int_{\mathbb{T}^d}\int_{\mathbb{T}^d}\vert K_s(\textbf{z})\vert\cdot \vert K_s(\textbf{w})\vert\cdot\Lambda(\textbf{w}-\textbf{z})d\textbf{w}d\textbf{z}ds\\
&\leq C\mathcal{C}_2^2\int_0^t\int_{\mathbb{T}^d}\int_{\mathbb{T}^d}\vert K_s(\textbf{z})\vert\cdot[\bar{p}(t-s,\textbf{x}-\textbf{w})+\bar{p}(t-s,\textbf{y}-\textbf{w})]\Lambda(\textbf{w-z})d\textbf{w}d\textbf{z}ds.
\end{split}
\end{equation}
Then we apply the standard Fourier decomposition \eqref{pfourier} and \eqref{lambdabound} to estimate the spatial convolution,
\begin{equation}
\label{pconv}
\begin{split}
\int_{\mathbb{T}^d}\bar{p}(t-s,\textbf{x}-\textbf{w})\Lambda(\textbf{w}-\textbf{z})d\textbf{w}&=\sum_{\textbf{n}\in\Z^d}\lambda(\textbf{n})\exp(-(2\pi\vert \textbf{n}\vert)^\alpha(t-s))\exp(2\pi i\textbf{n}\cdot(\textbf{x}-\textbf{z}))\\
&\leq \sum_{\textbf{n}\in\Z^d}\lambda(\textbf{n})\exp(-(2\pi\vert \textbf{n}\vert)^\alpha(t-s))\\
&\leq C(d,\beta)\sum_{\textbf{n}\in\Z^d}\vert \textbf{n}\vert^{-d+\beta}\exp(-(2\pi\vert \textbf{n}\vert)^\alpha(t-s))\\
&\leq C(d,\beta)\int_{\R^d} \vert\textbf{x}\vert^{-d+\beta}\exp(-(2\pi \vert \textbf{x}\vert)^\alpha(t-s))d\textbf{x}\\
&\leq C(d,\beta)\int_0^\infty x^{-d+\beta}\exp(-(2\pi x)^\alpha(t-s))x^{d-1}dx\\
&=C(d,\alpha,\beta)(t-s)^{-\beta/\alpha}\int_0^\infty w^{\frac{\beta}{\alpha}-1}\exp(-w)dw\\
&=C(d,\alpha,\beta)(t-s)^{-\beta/\alpha},
\end{split}
\end{equation}
where $\int_0^\infty w^{\frac{\beta}{\alpha}-1}\exp(-w)dw$ is the Gamma function valued at $\frac{\beta}{\alpha}$. Similarly, we get a bound for $\int_{\mathbb{T}^d}\bar{p}(t-s,\textbf{y}-\textbf{w})\Lambda(\textbf{w}-\textbf{z})d\textbf{w}$ as well. 

Plugging \eqref{pconv} and Lemma \ref{pdiff} in \eqref{spatialregularityproof}, and since $2\wedge Cx\leq \max\{2,C\}x^{\alpha\xi}$ for all $x>0,\xi\in(0,1/\alpha)$, we derive
\begin{equation*}
\begin{split}
\mathbb{E}\left[(N(t,\textbf{x})-N(t,\textbf{y}))^2\right]&\leq C(d, \alpha, \beta)\mathcal{C}_2^2\int_0^t\int_{\mathbb{T}^d}\vert K_s(\textbf{z})\vert(t-s)^{-\beta/\alpha}d\textbf{z}ds\\
&\leq C(d, \alpha, \beta)\mathcal{C}_2^2\int_0^t(t-s)^{-\beta/\alpha}\left(\frac{C\vert \textbf{x}-\textbf{y}\vert}{(t-s)^{1/\alpha}}\wedge 2\right)ds\\
&\leq C(d, \alpha, \beta)\mathcal{C}_2^2\vert \textbf{x}-\textbf{y}\vert^{\alpha\xi}\int_0^t(t-s)^{-\xi-\beta/\alpha}ds\\
&\leq  C(d, \alpha, \beta)\mathcal{C}_2^2\vert \textbf{x}-\textbf{y}\vert^{\alpha\xi}.
\end{split}
\end{equation*}
Note that the integral $\int_0^t(t-s)^{-\xi-\beta/\alpha}ds$ converges when $\xi<\frac{\alpha-\beta}{\alpha}$.
\end{proof}

\begin{lemma}\label{timeregularity} There exists a constant $C(d, \alpha, \beta)>0$ independent of $\mathcal{C}_2$ in \eqref{hypothesis2} such that for any $\zeta\in\left(0,\frac{\alpha-\beta}{\alpha}\right)$, $1\geq t\geq s>0$ and $\textbf{x},\textbf{y}\in \mathbb{T}^d$, we have
\begin{equation*}
\mathbb{E}\left[(N(t,\textbf{x})-N(s,\textbf{x}))^2\right]\leq C(d, \alpha, \beta)\mathcal{C}_2^2\vert t-s\vert^{\zeta}.
\end{equation*}
\end{lemma}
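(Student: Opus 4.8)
The plan is to mimic the proof of Lemma \ref{spatialregularity}, but now splitting the time increment $N(t,{\bf x})-N(s,{\bf x})$ into the two standard pieces: the part where the integrand on $[0,s]$ feels the difference of heat kernels $\bar p(t-r,\cdot)-\bar p(s-r,\cdot)$, and the part coming from the fresh noise on $[s,t]$. Precisely, write
\begin{equation*}
N(t,{\bf x})-N(s,{\bf x}) = \int_{[0,s]\times\mathbb{T}^d}\!\!\big(\bar p(t-r,{\bf x-y})-\bar p(s-r,{\bf x-y})\big)\sigma(\cdots)F(drd{\bf y}) + \int_{[s,t]\times\mathbb{T}^d}\!\!\bar p(t-r,{\bf x-y})\sigma(\cdots)F(drd{\bf y}),
\end{equation*}
so that by the elementary inequality $(a+b)^2\le 2a^2+2b^2$ and the Itô isometry for Walsh integrals it suffices to bound the two second-moment terms $I_1$ and $I_2$ separately, each by $C\mathcal{C}_2^2|t-s|^\zeta$.

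For $I_2$, I would use Hypothesis \eqref{hypothesis2} to pull out $\mathcal{C}_2^2$ and then apply the spatial-convolution estimate \eqref{pconv}, exactly as in Lemma \ref{spatialregularity}: $I_2 \le \mathcal{C}_2^2\int_s^t\int_{\mathbb{T}^d}\int_{\mathbb{T}^d}\bar p(t-r,{\bf x-z})\bar p(t-r,{\bf x-w})\Lambda({\bf w-z})\,d{\bf w}d{\bf z}dr \le C(\alpha,\beta,d)\mathcal{C}_2^2\int_s^t (t-r)^{-\beta/\alpha}\,dr$, which equals a constant times $(t-s)^{1-\beta/\alpha}$; since $1-\beta/\alpha > \zeta$ and $t-s\le 1$, this is $\le C\mathcal{C}_2^2|t-s|^\zeta$. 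For $I_1$, I would again bound one of the two kernel-difference factors by $|\bar p(t-r,{\bf x-w})-\bar p(s-r,{\bf x-w})| \le \bar p(t-r,{\bf x-w})+\bar p(s-r,{\bf x-w})$ and convolve that against $\Lambda$ using \eqref{pconv}, leaving $I_1 \le C\mathcal{C}_2^2\int_0^s (s-r)^{-\beta/\alpha}\int_{\mathbb{T}^d}|\bar p(t-r,{\bf x-z})-\bar p(s-r,{\bf x-z})|\,d{\bf z}\,dr$. Now \eqref{pdifftime} gives $\int_{\mathbb{T}^d}|\bar p(t-r,{\bf z})-\bar p(s-r,{\bf z})|\,d{\bf z}\le C'\big(\log\frac{t-r}{s-r}\wedge 1\big)$, and I would substitute $r\mapsto s-r$ and use the crude bound $\log\frac{t-r'+ (t-s)}{r'}\wedge 1 \le C\big((t-s)/r'\big)^{\zeta'}$ for a suitable small exponent, or more simply split the $r$-integral at $s-(t-s)$: on $[s-(t-s),s]$ use $\log(\cdots)\wedge 1\le 1$ and the integrability of $(s-r)^{-\beta/\alpha}$ near $r=s$ to get a contribution $\lesssim (t-s)^{1-\beta/\alpha}$, while on $[0,s-(t-s)]$ use $\log\frac{t-r}{s-r}\le \frac{t-s}{s-r}$ to get $\lesssim (t-s)\int_0^{s-(t-s)}(s-r)^{-1-\beta/\alpha}\,dr \lesssim (t-s)^{1-\beta/\alpha}$. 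Either way both pieces are $\le C\mathcal{C}_2^2|t-s|^{\zeta}$ once $\zeta < 1-\beta/\alpha = \frac{\alpha-\beta}{\alpha}$, which is exactly the stated range.

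The main obstacle is the term $I_1$: the logarithmic singularity in \eqref{pdifftime} together with the integrable-but-singular weight $(s-r)^{-\beta/\alpha}$ near $r=s$ requires the dyadic/threshold split described above to convert the "$\log$ times integrable singularity" into a genuine Hölder power $|t-s|^\zeta$, and one must check that the restriction $\zeta\in(0,\frac{\alpha-\beta}{\alpha})$ is precisely what makes all the resulting exponents work (in particular, that $1-\beta/\alpha>0$ so the $r$-integrals converge, which is guaranteed by $\beta<\alpha$). Everything else — Fubini, the Itô isometry, pulling out $\mathcal{C}_2^2$ via Hypothesis \eqref{hypothesis2}, and the Fourier-series convolution bound \eqref{pconv} — is a direct reprise of the proof of Lemma \ref{spatialregularity}.
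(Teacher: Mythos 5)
Your proposal is correct and follows essentially the same route as the paper: the same decomposition of $N(t,{\bf x})-N(s,{\bf x})$ into $I_1$ (kernel difference on $[0,s]$) plus $I_2$ (fresh noise on $[s,t]$), the same use of \eqref{pconv}, \eqref{pdifftime}, and Hypothesis \eqref{hypothesis2}, and the same observation that $\zeta<\tfrac{\alpha-\beta}{\alpha}$ is what makes the $I_1$ integral converge. The only cosmetic difference is in how you tame the logarithmic factor: the paper bounds $1\wedge\log(1+y)<y^\zeta$ directly, while you also offer a split of the $r$-integral at $s-(t-s)$; both deliver the same $(t-s)^\zeta$ estimate.
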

\begin{proof} As in Lemma \ref{spatialregularity}, we fix $s$, $t$ and $\textbf{x}$, and define, for $0<r<s<t$,
\begin{equation*}
L_r(\textbf{z})=\bar{p}(t-r,\textbf{x}-\textbf{z})-\bar{p}(s-r,\textbf{x}-\textbf{z}).
\end{equation*}
Following the fact that the Gaussian noise $\dot{F}(t,\textbf{x})$ is independent on disjoint time intervals, we obtain
\begin{equation}
\label{timeregularityproof}
\begin{split}
\mathbb{E}&[(N(t,\textbf{x})-N(s,\textbf{x}))^2]\\
&=\mathbb{E}\left[\left(\int_0^s\int_{\mathbb{T}^d}L_r(\textbf{z})\sigma(r,\textbf{z},u(r,\textbf{z}))F(d\textbf{z}dr)\right.\right.\\
&\hspace{5cm}\left.\left.+\int_s^t\int_{\mathbb{T}^d}\bar{p}(t-r,\textbf{x}-\textbf{z})\sigma(r,\textbf{z},u(r,\textbf{z}))F(d\textbf{z}dr)\right)^2\right]\\
&=\mathbb{E}\left[\left(\int_0^s\int_{\mathbb{T}^d}L_r(\textbf{z})\sigma(r,\textbf{z},u(r,\textbf{z}))F(d\textbf{z}dr)\right)^2\right]\\
&\hspace{4cm}+\mathbb{E}\left[\left(\int_s^t\int_{\mathbb{T}^d}\bar{p}(t-r,\textbf{x}-\textbf{z})\sigma(r,\textbf{z},u(r,\textbf{z}))F(d\textbf{z}dr)\right)^2\right]\\
&=:I_1+I_2.
\end{split}
\end{equation}

We study $I_1$ and $I_2$ separately. For $I_1$, as in \eqref{spatialregularityproof}, we get
\begin{equation}
\label{I1}
\begin{split}
I_1&\leq C\sup_{r,\textbf{u}}\mathbb{E}\left[\sigma(r,\textbf{u},u(r,\textbf{u}))^2\right]\int_0^s\int_{\mathbb{T}^d}\int_{\mathbb{T}^d}\vert L_r(\textbf{z})\vert\cdot \vert L_r(\textbf{w})\vert\cdot\Lambda(\textbf{w}-\textbf{z})d\textbf{w}d\textbf{z}dr\\
&\leq C\mathcal{C}_2^2\int_0^s\int_{\mathbb{T}^d}\int_{\mathbb{T}^d}\vert L_r(\textbf{z})\vert\cdot[\bar{p}(t-r,\textbf{x}-\textbf{w})+\bar{p}(s-r,\textbf{x}-\textbf{w})]\Lambda(\textbf{w-z})d\textbf{w}d\textbf{z}dr.
\end{split}
\end{equation}
Applying \eqref{pconv} and Lemma \ref{pdiff} to \eqref{I1}, and since $2\wedge C'\log(1+x)\leq \max\{2, C'\}x^{\zeta}$ for all $x>0,\zeta\in(0,1)$, we derive
\begin{equation}
\begin{split}
\label{timeregularityproof2}
I_1&\leq C(d,\alpha,\beta)\mathcal{C}_2^2\int_0^s\left(C'\log\left(\frac{t-r}{s-r}\right)\wedge 2\right)\cdot\left[(t-r)^{-\beta/\alpha}+(s-r)^{-\beta/\alpha}\right]dr\\
&\leq C(d,\alpha,\beta)\mathcal{C}_2^2\int_0^s\left(C'\log\left(\frac{t-r}{s-r}\right)\wedge 2\right)\cdot\left[(s-r)^{-\beta/\alpha}+(s-r)^{-\beta/\alpha}\right]dr\\
&\leq C(d,\alpha,\beta)\mathcal{C}_2^2\int_0^s\left(C'\log\left(\frac{t-s+u}{u}\right)\wedge 2\right)u^{-\beta/\alpha}du\\
&\leq C(d,\alpha,\beta)\mathcal{C}_2^2(t-s)^\zeta\int_0^su^{-\beta/\alpha-\zeta}du\\
&\leq C(d,\alpha,\beta)\mathcal{C}_2^2(t-s)^\zeta.
\end{split}
\end{equation}
Note that the integral $\int_0^su^{-\beta/\alpha-\zeta}du$ converges when $\zeta<\frac{\alpha-\beta}{\alpha}$. 

For $I_2$, as in \eqref{spatialregularityproof}, we get
\begin{equation}
\label{I2}
\begin{split}
I_2&\leq C\sup_{r,\textbf{u}}\mathbb{E}\left[\sigma(r,\textbf{u},u(r,\textbf{u}))^2\right]\\
&\hspace{3cm}\cdot\int_s^t\int_{\mathbb{T}^d}\int_{\mathbb{T}^d}\bar{p}(t-r,\textbf{x}-\textbf{z})\bar{p}(t-r,\textbf{x}-\textbf{w})\Lambda(\textbf{w}-\textbf{z})d\textbf{w}d\textbf{z}dr\\
&\leq C\mathcal{C}_2^2\int_s^t\int_{\mathbb{T}^d}\int_{\mathbb{T}^d}\bar{p}(t-r,\textbf{x}-\textbf{z})\bar{p}(t-r,\textbf{x}-\textbf{w})\Lambda(\textbf{w}-\textbf{z})d\textbf{w}d\textbf{z}dr.
\end{split}
\end{equation}

As in \eqref{pconv}, we use the standard Fourier decomposition \eqref{pfourier} and \eqref{lambdabound} to estimate the spatial convolution in \eqref{I2}, 
\begin{equation}
\label{ppconv}
\begin{split}
\int_{\mathbb{T}^d}\int_{\mathbb{T}^d}\bar{p}(t-r,\textbf{x}-\textbf{z})&\bar{p}(t-r,\textbf{x}-\textbf{w})\Lambda(\textbf{w}-\textbf{z})d\textbf{w}d\textbf{z}\\
&=\sum_{\textbf{n}\in\Z^d}\lambda(\textbf{n})\exp\left(-2(2\pi \vert \textbf{n}\vert)^\alpha(t-r)\right)\\
&\leq C(d,\beta)\sum_{\textbf{n}\in\Z^d}\vert \textbf{n}\vert^{-d+\beta}\exp\left(-2(2\pi\vert \textbf{n}\vert)^\alpha(t-r)\right)\\
&\leq C(d,\beta)\int_{\R^d} \vert\textbf{x}\vert^{-d+\beta}\exp(-2(2\pi \vert \textbf{x}\vert)^\alpha(t-r))d\textbf{x}\\
&\leq C(d,\beta)\int_0^\infty x^{-d+\beta}\exp(-2(2\pi x)^\alpha(t-r))x^{d-1}dx\\
&=C(d,\alpha,\beta)(t-r)^{-\beta/\alpha}\int_0^\infty w^{\frac{\beta}{\alpha}-1}\exp(-w)dw\\
&=C(d,\alpha,\beta)(t-r)^{-\beta/\alpha},
\end{split}
\end{equation}
where $\int_0^\infty w^{\frac{\beta}{\alpha}-1}\exp(-w)dw$ is the Gamma function valued at $\frac{\beta}{\alpha}$. Plugging it in \eqref{I2} yields
\begin{equation}
\label{I2equ}
\begin{split}
I_2&\leq C(d,\alpha,\beta)\mathcal{C}_2^2\int_s^t(t-r)^{-\beta/\alpha}dr\\
&=C(d,\alpha,\beta)\mathcal{C}_2^2(t-s)^{\frac{\alpha-\beta}{\alpha}}\\
&<C(d,\alpha,\beta)\mathcal{C}_2^2(t-s)^{\zeta}.
\end{split}
\end{equation}

By \eqref{timeregularityproof}, \eqref{timeregularityproof2} and \eqref{I2equ}, we therefore conclude
\begin{equation*}
\label{timereg}
\mathbb{E}\left[(N(t,\textbf{x})-N(s,\textbf{x}))^2\right]\leq C(d,\alpha,\beta)\mathcal{C}_2^2(t-s)^{\zeta}.
\end{equation*}
\end{proof}

\begin{lemma}There exist constants $C_1,C_2,C_3,C_4>0$ depending only on $\alpha,\beta,d$ and $\mathcal{C}_2$ in \eqref{hypothesis2} such that for all $0\leq s<t\leq 1$, $\textbf{x},\textbf{y}\in\mathbb{T}^d$, $\xi\in\left(0,\frac{1}{\alpha}\wedge\frac{\alpha-\beta}{\alpha}\right),\zeta\in\left(0,\frac{\alpha-\beta}{\alpha}\right)$, and $\kappa>0$, we have
\begin{equation}
\label{spacep}
P(\vert N(t,\textbf{x})-N(t,\textbf{y})\vert>\kappa)\leq C_1\exp\left(-\frac{C_2\kappa^2}{\mathcal{C}_2^2\vert \textbf{x}-\textbf{y}\vert^{\alpha\xi}}\right),
\end{equation}
\begin{equation}
\label{timep}
P(\vert N(t,\textbf{x})-N(s,\textbf{x})\vert>\kappa)\leq C_3\exp\left(-\frac{C_4\kappa^2}{\mathcal{C}_2^2\vert t-s\vert^{\zeta}}\right).
\end{equation}
\end{lemma}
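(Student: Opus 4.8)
The plan is to deduce both tail bounds from the second–moment estimates of Lemmas \ref{spatialregularity} and \ref{timeregularity} via the Burkholder--Davis--Gundy inequality. The point to keep in mind is that $N(t,{\bf x})$ is \emph{not} Gaussian, since the integrand in \eqref{noiseterm} carries the random coefficient $\sigma(s,{\bf y},u(s,{\bf y}))$; however, it is a stochastic integral against the martingale measure $F$, and Hypothesis \eqref{hypothesis2} forces its quadratic variation to be dominated by a \emph{deterministic} quantity of exactly the order appearing in those lemmas, which is what turns an $L^{2p}$ bound into a genuinely sub-Gaussian tail.

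First I would fix $t,{\bf x},{\bf y}$ (resp.\ $t,s,{\bf x}$) and realize the increment as $M_t$, the terminal value of the square-integrable continuous martingale $M_r:=\int_{[0,r]\times\mathbb{T}^d} g(u,{\bf z})\,F(du\,d{\bf z})$, $0\le r\le t$, where in the spatial case $g(u,{\bf z})=[\bar{p}(t-u,{\bf x-z})-\bar{p}(t-u,{\bf y-z})]\sigma(u,{\bf z},u(u,{\bf z}))$ and in the temporal case $g$ is the (disjointly supported) integrand read off from \eqref{timeregularityproof}. Its predictable quadratic variation is
$$
\langle M\rangle_t=\int_0^t\int_{\mathbb{T}^d}\int_{\mathbb{T}^d} g(u,{\bf z})\,g(u,{\bf w})\,\Lambda({\bf w-z})\,d{\bf w}\,d{\bf z}\,du .
$$
Because $\Lambda\ge 0$ and $g(u,{\bf z})g(u,{\bf w})\le |g(u,{\bf z})|\,|g(u,{\bf w})|$ pointwise, and because $|\sigma|\le\mathcal{C}_2$ by \eqref{hypothesis2}, the chain of estimates already carried out in the proof of Lemma \ref{spatialregularity} (from \eqref{spatialregularityproof} onward, bounding $|K({\bf w})|\le\bar{p}(t-u,{\bf x-w})+\bar{p}(t-u,{\bf y-w})$ and invoking \eqref{pconv} and Lemma \ref{pdiff}) and in the proof of Lemma \ref{timeregularity} applies verbatim to the \emph{pathwise} integral. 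This gives
$$
\langle M\rangle_t\le C\mathcal{C}_2^2\,|{\bf x-y}|^{\alpha\xi}\qquad\Bigl(\text{resp. }\langle M\rangle_t\le C\mathcal{C}_2^2\,|t-s|^{\zeta},\ \text{using }|t-s|\le 1\text{ to absorb }I_2\Bigr),
$$
with no randomness surviving in the right-hand side.

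Next I would apply the Burkholder--Davis--Gundy inequality for continuous martingales in the form $\|M_t\|_{2p}\le C\sqrt{p}\,\bigl\|\langle M\rangle_t^{1/2}\bigr\|_{2p}$, which yields, for every integer $p\ge 1$,
$$
\mathbb{E}\bigl[|M_t|^{2p}\bigr]\le (Cp)^p\,\|\langle M\rangle_t\|_\infty^{\,p}\le \bigl(C\mathcal{C}_2^2\,p\,\theta\bigr)^p,
$$
where $\theta=|{\bf x-y}|^{\alpha\xi}$ (resp.\ $\theta=|t-s|^\zeta$). Then Chebyshev's inequality gives $P(|M_t|>\kappa)\le (C\mathcal{C}_2^2 p\,\theta/\kappa^2)^p$; choosing $p=\lfloor \kappa^2/(eC\mathcal{C}_2^2\theta)\rfloor$ whenever this exceeds $1$ produces $P(|M_t|>\kappa)\le e^{-p}\le C_1\exp\bigl(-C_2\kappa^2/(\mathcal{C}_2^2\theta)\bigr)$, and for the remaining (small $\kappa$) range the stated bound holds trivially after enlarging $C_1$. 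Taking $\theta=|{\bf x-y}|^{\alpha\xi}$ gives \eqref{spacep} and $\theta=|t-s|^\zeta$ gives \eqref{timep}.

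The only genuinely delicate point I anticipate is the quadratic-variation domination: one must check that positivity of $\Lambda$ (equivalently $\lambda({\bf n})\ge 0$) together with $\mathcal{C}_1\le\sigma\le\mathcal{C}_2$ is exactly what allows the random integrand to be replaced by the deterministic heat kernels \emph{pathwise}, so that Lemmas \ref{spatialregularity} and \ref{timeregularity} can be re-read as bounds on $\langle M\rangle_t$ rather than on $\mathbb{E}[M_t^2]$. The rest is routine, modulo keeping track of the linear-in-$p$ growth of the BDG constant, which is precisely what is needed for the Gaussian (rather than merely stretched-exponential) decay in \eqref{spacep}--\eqref{timep}.
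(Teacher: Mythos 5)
Your proposal is correct, and the key structural insight is identical to the paper's: view the increment as the terminal value of a continuous martingale $M_r$, and observe that Hypothesis \eqref{hypothesis2} gives a \emph{pathwise} (not merely mean-square) bound $\langle M\rangle_t\le C\mathcal{C}_2^2\theta$ because the proof of Lemma \ref{spatialregularity} (resp.\ Lemma \ref{timeregularity}) never uses the expectation after replacing $\sigma^2$ by $\mathcal{C}_2^2$ — the entire chain of estimates via \eqref{pconv} and Lemma \ref{pdiff} is deterministic once $\Lambda\ge 0$ is exploited. Where you diverge from the paper is only in the last step, converting the deterministic quadratic-variation bound into a sub-Gaussian tail. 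The paper invokes the Dambis--Dubins--Schwarz time change $M_t=B_{\langle M\rangle_t}$, so that $|M_t|\le\sup_{s\le C\mathcal{C}_2^2\theta}|B_s|$, and finishes with the reflection principle and the Gaussian tail of Brownian motion; you instead use the Burkholder--Davis--Gundy inequality with the sharp $O(\sqrt{p})$ constant, take $2p$-th moments, and optimize $p$ in Chebyshev. Both are standard and give the same exponent; the DDS route is arguably a little more economical here since it requires no tracking of constant growth in $p$, but your argument is equally valid and slightly more robust (it would survive without the continuity of $M$ at the cost of a worse constant). One small additional difference worth noting: in the temporal case the paper decomposes $N(t,{\bf x})-N(s,{\bf x})$ into two martingales $U$ and $V$ supported on $[0,s]$ and $[s,t]$ and bounds each separately, whereas you treat the whole thing as a single martingale with piecewise-defined integrand; your version is a touch cleaner and avoids the extra factor-of-two union bound, and the $I_2$ contribution is indeed absorbed by $(t-s)^{(\alpha-\beta)/\alpha}\le(t-s)^\zeta$ for $t-s\le 1$ as you say.
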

\begin{proof}
For a fixed $t$, we define
\[N_t(s,\textbf{x}):=\int_{[0,s]\times\mathbb{T}^d} \bar{p}(t-r,\textbf{x}-\textbf{y})\sigma(r,\textbf{y},u(r,\textbf{y}))F(drd\textbf{y}).\]
Note that $N_t(t,\textbf{x})=N(t,\textbf{x})$ and $N_t(s,\textbf{x})$ is a continuous $\mathcal{F}_s^F$ adapted martingale in $s\leq t$ since the integrand does not depend on $s$. For fixed $t,\textbf{x}$ and $\textbf{y}$, let
\[M_s:=N_t(s,\textbf{x})-N_t(s,\textbf{y})=\int_{[0,s]\times\mathbb{T}^d} [\bar{p}(t-r,\textbf{x}-\textbf{z})-\bar{p}(t-r,\textbf{y}-\textbf{z})]\sigma(r,\textbf{z},u(r,\textbf{z}))F(drd\textbf{z}),\]
and it is easy to check that $M_t=N(t,\textbf{x})-N(t,\textbf{y})$. As $M_s$ is a continuous $\mathcal{F}_s^F$ adapted martingale with $M_0=0$, it is a time changed Brownian motion, i.e.,
\[M_t=B_{\langle M\rangle_t},\]
and Lemma \ref{spatialregularity} gives a uniform bound as
\[\langle M\rangle_t\leq C\mathcal{C}_2^2\vert \textbf{x}-\textbf{y}\vert^{\alpha\xi}.\]
Therefore, by the reflection principle for Brownian motion $B_{\langle M\rangle_t}$, we obtain
\begin{align*}
P(N(t,\textbf{x})-N(t,\textbf{y})>\kappa)&=P(M_t>\kappa)=P(B_{\langle M\rangle_t}>\kappa)\\
&\leq P\left(\sup_{s\leq C\mathcal{C}_2^2\vert \textbf{x}-\textbf{y}\vert^{\alpha\xi}}B_s>\kappa\right)=2P\left(B_{C\mathcal{C}_2^2\vert \textbf{x}-\textbf{y}\vert^{\alpha\xi}}>\kappa\right)\\
&\leq C_1\exp\left(-\frac{C_2\kappa^2}{\mathcal{C}_2^2\vert \textbf{x}-\textbf{y}\vert^{\alpha\xi}}\right).
\end{align*}
Switching $\textbf{x}$ and $\textbf{y}$ yields
\begin{align*}
P(-N(t,\textbf{x})+N(t,\textbf{y})>\kappa)&=P(M_t<-\kappa)\leq 2P\left(B_{C\mathcal{C}_2^2\vert \textbf{x}-\textbf{y}\vert^{\alpha\xi}}<-\kappa\right)\\
&\leq C_1\exp\left(-\frac{C_2\kappa^2}{\mathcal{C}_2^2\vert \textbf{x}-\textbf{y}\vert^{\alpha\xi}}\right).
\end{align*}
Consequently, for $\forall\xi\in\left(0,\frac{1}{\alpha}\wedge\frac{\alpha-\beta}{\alpha}\right)$, 
\[
P(\vert N(t,\textbf{x})-N(t,\textbf{y})\vert>\kappa)\leq C_1\exp\left(-\frac{C_2\kappa^2}{\mathcal{C}_2^2\vert \textbf{x}-\textbf{y}\vert^{\alpha\xi}}\right),
\]
which completes the proof of \eqref{spacep}. 

For a fixed $\textbf{x}$, we define 
\[U_{q_1}:=\int_{[0,{q_1}]\times\mathbb{T}^d} [\bar{p}(t-r,\textbf{x}-\textbf{y})-\bar{p}(s-r,\textbf{x}-\textbf{y})]\sigma(r,\textbf{y},u(r,\textbf{y}))F(drd\textbf{y})\]
where $0\leq q_1\leq s$. Note $U_{q_1}$ is a continuous $\mathcal{F}_{q_1}^F$ adapted martingale with $U_0=0$. In addition, we define
\[V_{q_2}:=\int_{[0,{q_2}]\times\mathbb{T}^d}\bar{p}(t-s-r,\textbf{x}-\textbf{y})\sigma(r+s,\textbf{y},u(r+s,\textbf{y}))F(drd\textbf{y})\]
where $0\leq q_2\leq t-s$. Note that $V_{q_2}$ is a continuous $\mathcal{F}_{q_2}^F$ adapted martingale with $V_0=0$. Thus, both $U_{q_1}$ and $V_{q_2}$ are time changed Brownian motions, i.e.,
\[U_s=B_{\langle U\rangle_s}\text{~and~}V_{t-s}=B'_{\langle V\rangle_{t-s}},\]
where $B$, $B'$ are two different Brownian motions. It is clear that $N(t,\textbf{x})-N(s,\textbf{x})=U_s+V_{t-s}$, which leads to
\[P(N(t,\textbf{x})-N(s,\textbf{x})>\kappa)\leq P(U_s>\kappa/2)+P(V_{t-s}>\kappa/2),\]
and Lemma \ref{timeregularity} provides a uniform bound as
\[\langle U\rangle_s\leq C\mathcal{C}_2^2(t-s)^{\zeta}\text{~and~}\langle V\rangle_{t-s}\leq C\mathcal{C}_2^2(t-s)^{\zeta}.\]
By the reflection principle for Brownian motions $B_{\langle U\rangle_s}$ and $B'_{\langle V\rangle_{t-s}}$, we get
\begin{align*}
P(N(t,\textbf{x})-N(s,\textbf{x})>\kappa)&\leq P\left(B_{\langle U\rangle_s}>\kappa/2\right)+P\left(B'_{\langle V\rangle_{t-s}}>\kappa/2\right)\\
&\leq 2P\left(\sup_{r\leq C\mathcal{C}_2^2\vert t-s\vert^{\zeta}}B_r>\frac{\kappa}{2}\right)=4P\left(B_{C\mathcal{C}_2^2\vert t-s\vert^{\zeta}}>\frac{\kappa}{2}\right)\\
&\leq C_3\exp\left(-\frac{C_4\kappa^2}{\mathcal{C}_2^2\vert t-s\vert^{\zeta}}\right),
\end{align*}
and
\begin{align*}
P(-N(t,\textbf{x})+N(s,\textbf{x})>\kappa)&\leq P(U_s<-\kappa/2)+P(V_{t-s}<-\kappa/2)\\
&\leq 4P\left(B_{C\mathcal{C}_2^2\vert t-s\vert^{\zeta}}<-\frac{\kappa}{2}\right)\\
&\leq C_3\exp\left(-\frac{C_4\kappa^2}{\mathcal{C}_2^2\vert t-s\vert^{\zeta}}\right).
\end{align*}
Consequently, for $\forall\zeta\in\left(0,\frac{\alpha-\beta}{\alpha}\right)$, 
\[
P(\vert N(t,\textbf{x})-N(s,\textbf{x})\vert>\kappa)\leq C_3\exp\left(-\frac{C_4\kappa^2}{\mathcal{C}_2^2\vert t-s\vert^{\zeta}}\right),
\]
which completes the proof of \eqref{timep}.
\end{proof}

\begin{definition}
\label{definition}
Given a grid,
\[\mathbb{G}_n=\left\lbrace\left(\frac{j}{2^{2n}},\frac{k_1}{2^{n}},...,\frac{k_d}{2^{n}}\right): 0\leq j\leq 2^{2n},0\leq k_1,...,k_d\leq 2^n~\text{for}~j,k_1,...,k_d\in\Z\right\rbrace,\]
we write
\[\left(t_j^{(n)},x_{k_1}^{(n)},...,x_{k_d}^{(n)}\right)=\left(\frac{j}{2^{2n}},\frac{k_1}{2^n},...,\frac{k_d}{2^n}\right).\]
Two points $\left(t_j^{(n)},x_{k_1}^{(n)},...,x_{k_d}^{(n)}\right)$ and $\left(t_{j'}^{(n)},x_{k'_1}^{(n)},...,x_{k'_d}^{(n)}\right)$ are called the \textbf{nearest neighbors} if either
\begin{enumerate}
\item[\textbf{1}.] $j=j',\vert k_i-k'_i\vert=1 \text{~for only one~} i \text{~and~} k_l=k'_l \text{~for the other indices~}l, or$
\item[\textbf{2}.] $\vert j-j'\vert=1~\text{and}~k_i=k'_i \text{~for all~} i.$
\end{enumerate}
\end{definition}

The following lemma is analogous to Lemma 3.4 in \cite{athreya2021small} and Lemma 2.5 in \cite{foondun2022small}, which is crucial for estimating small ball probability.

\begin{lemma}\label{larged}There exist constants $C_5,C_6>0$ depending only on $\alpha,\beta,d$ and $\mathcal{C}_2$ in \eqref{hypothesis2} such that for all $\gamma,\kappa,\varepsilon>0$ and $\gamma\varepsilon^4\leq 1$, we have
\begin{equation*}
P\left(\sup_{\substack{0\leq t\leq\gamma\varepsilon^4\\ \textbf{x}\in \left[0,\varepsilon^2\right]^d}}\vert N(t,\textbf{x})\vert>\kappa\varepsilon^{\frac{2(\alpha-\beta)}{\alpha}}\right)\leq \frac{C_5}{1\wedge \sqrt{\gamma^d}}\exp\left(-\frac{C_6\kappa^2}{\mathcal{C}_2^2\gamma^{\frac{\alpha-\beta}{\alpha}}}\right).
\end{equation*}
\end{lemma}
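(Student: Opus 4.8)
The plan is to use a standard chaining/metric-entropy argument over the space-time box $[0,\gamma\varepsilon^4]\times[0,\varepsilon^2]^d$, reducing the continuous supremum to a supremum over a countable dense set of dyadic grids (the grids $\mathbb{G}_n$ of Definition \ref{definition}, suitably rescaled to the box) and using the Gaussian tail bounds \eqref{spacep} and \eqref{timep} on increments between nearest neighbors. First I would rescale: setting $t=\gamma\varepsilon^4\,\tilde t$ and ${\bf x}=\varepsilon^2\,\tilde{\bf x}$ puts the box in the unit cube, and the key point is to track how the increment variances scale. By Lemma \ref{spatialregularity}, a spatial increment over distance $\varepsilon^2\cdot 2^{-n}$ has variance $\lesssim \mathcal{C}_2^2(\varepsilon^2 2^{-n})^{\alpha\xi}$; by Lemma \ref{timeregularity}, a time increment over $\gamma\varepsilon^4\cdot 2^{-2n}$ has variance $\lesssim \mathcal{C}_2^2(\gamma\varepsilon^4 2^{-2n})^{\zeta}$. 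Choosing $\xi$ close to its upper limit $\tfrac1\alpha\wedge\tfrac{\alpha-\beta}{\alpha}$ and $\zeta$ close to $\tfrac{\alpha-\beta}{\alpha}$ makes both exponents behave like $\varepsilon^{\,4(\alpha-\beta)/\alpha}$ up to the $2^{-n}$ factors, matching the target scale $\varepsilon^{2(\alpha-\beta)/\alpha}$ squared; this is exactly why the box has sides $\varepsilon^2$ in space and $\varepsilon^4$ in time.

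Next I would run the chaining. Write $N$ on the box as a telescoping sum of increments between consecutive dyadic grid approximations: $N(t,{\bf x}) = \sum_{n\ge 0}\big(N(\pi_{n+1}(t,{\bf x}))-N(\pi_n(t,{\bf x}))\big)$ where $\pi_n$ projects to the nearest point of the $n$-th grid, and $N$ at the coarsest grid point (the corner) is $0$. Each step is a nearest-neighbor increment, so \eqref{spacep} and \eqref{timep} apply. A union bound over the $O(2^{(d+2)n})$ grid points at level $n$, against a threshold $\kappa_n \varepsilon^{2(\alpha-\beta)/\alpha}$ with $\sum_n \kappa_n \le \kappa$ chosen so that $\kappa_n$ decays geometrically (e.g.\ $\kappa_n \asymp \kappa\, 2^{-\delta n}$ for small $\delta>0$), gives for each $n$ a bound
$$
C\,2^{(d+2)n}\exp\!\left(-\frac{C'\kappa_n^2\,\varepsilon^{4(\alpha-\beta)/\alpha}}{\mathcal{C}_2^2\big(\gamma^{\zeta}\varepsilon^{4\zeta}2^{-2\zeta n}\vee (\varepsilon^2 2^{-n})^{\alpha\xi}\big)}\right).
$$
The exponent in the numerator after cancellation carries the factor $2^{2\zeta n}$ (or $2^{\alpha\xi n}$) which, for $\zeta>0$, dominates the polynomial $2^{(d+2)n}$ from the union bound once $n$ is large, so the series over $n$ converges and is dominated by its $n=0$ term. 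The $n=0$ term is precisely $C\exp(-C'\kappa^2/(\mathcal{C}_2^2\gamma^{(\alpha-\beta)/\alpha}))$ after inserting the scaling, which is the claimed bound; the $1/(1\wedge\sqrt{\gamma^d})$ prefactor appears because when $\gamma<1$ the time direction is shorter than $\varepsilon^4$ relative to the spatial grid, forcing extra spatial grid points (a mismatch in the number of sub-boxes needed to get down to the comparison scale in each coordinate), contributing a factor $\gamma^{-d/2}$.

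The main obstacle I expect is bookkeeping the two competing scales cleanly: the time and space increments have genuinely different exponents ($2\zeta$ versus $\alpha\xi$ per dyadic level, and $\zeta$ versus $\alpha\xi/2$ against $\gamma$ and $\varepsilon$), so one must organize the grid so that at each level the dominant increment controls the tail, verify that the worst case still beats the entropy factor $2^{(d+2)n}$, and confirm that after summing the geometric-type series the constants $C_5,C_6$ depend only on $\alpha,\beta,d,\mathcal{C}_2$ and not on $\gamma,\kappa,\varepsilon$. A secondary technical point is passing from the supremum over the dense union of grids to the true supremum, which requires an a.s.\ continuity statement for $N$ on the box — this follows from Lemmas \ref{spatialregularity}, \ref{timeregularity} and Kolmogorov's continuity criterion, exactly as in \cite{athreya2021small}, so I would cite that rather than reprove it. Tracking the $\gamma<1$ versus $\gamma\ge1$ split carefully to produce the stated $1\wedge\sqrt{\gamma^d}$ denominator is where most of the care goes.
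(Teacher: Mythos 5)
Your high-level plan --- chain over nearest-neighbor increments on the dyadic grids $\mathbb{G}_n$ using \eqref{spacep} and \eqref{timep}, union-bound over the $O(2^{(2+d)n})$ points per level, sum a geometrically decaying chaining budget, and treat $\gamma<1$ separately --- is the same route the paper takes. But there is a genuine gap in the scaling bookkeeping, precisely at the step you describe as ``the exponent in the numerator after cancellation.'' You claim the $n=0$ term is ``precisely'' $C\exp\bigl(-C'\kappa^2/(\mathcal{C}_2^2\gamma^{(\alpha-\beta)/\alpha})\bigr)$ after inserting the scaling, but it is not: because Lemmas \ref{spatialregularity} and \ref{timeregularity} only give Hölder exponents $\xi<\tfrac{1}{\alpha}\wedge\tfrac{\alpha-\beta}{\alpha}$ and $\zeta<\tfrac{\alpha-\beta}{\alpha}$ \emph{strictly}, the $n=0$ exponent in your display is
$\tfrac{\kappa^2\varepsilon^{4(\alpha-\beta)/\alpha}}{\mathcal{C}_2^2\gamma^\zeta\varepsilon^{4\zeta}}=\tfrac{\kappa^2}{\mathcal{C}_2^2\gamma^{(\alpha-\beta)/\alpha}}\,(\gamma\varepsilon^4)^{\frac{\alpha-\beta}{\alpha}-\zeta}$,
which is \emph{smaller} than the target by the factor $(\gamma\varepsilon^4)^{(\alpha-\beta)/\alpha-\zeta}<1$ whenever $\gamma\varepsilon^4<1$, i.e.\ exactly in the regime the lemma is supposed to cover and which the application actually uses (with $\gamma=c_0$, $c_0\varepsilon^4\to 0$). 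Choosing $\zeta,\xi$ ``close to the limit'' does not remove this factor, because the constants in Lemmas \ref{spatialregularity} and \ref{timeregularity} blow up as $\zeta\to\tfrac{\alpha-\beta}{\alpha}$, $\xi\to\tfrac1\alpha\wedge\tfrac{\alpha-\beta}{\alpha}$. This is the precise difficulty the paper confronts by introducing the cutoff level $n_0=\lceil\log_2(\gamma^{-1/2}\varepsilon^{-2})\rceil$ (the coarsest level at which $\mathbb{G}_n$ contains more than the origin), the two auxiliary exponents $\delta_0,\delta_1$ tied together by the constraint \eqref{deltaconstraint}, and the $n_0$-dependent normalizing constant $\mathcal{M}$; your sketch has no analogue of this mechanism and would not close as written.

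A secondary, more cosmetic issue is your treatment of the $\gamma<1$ regime: ``extra spatial grid points contributing $\gamma^{-d/2}$'' is the right intuition, but the actual derivation in the paper is a stationarity-plus-union-bound reduction: cover $[0,\varepsilon^2]^d$ by $\gamma^{-d/2}$ translates of $[0,\sqrt{\gamma}\varepsilon^2]^d$, relabel $\tilde\varepsilon=\gamma^{1/4}\varepsilon$ (so the sub-box is a $\gamma=1$ box of side $\tilde\varepsilon^2$ and time length $\tilde\varepsilon^4$), rewrite the threshold as $\tilde\kappa\tilde\varepsilon^{2(\alpha-\beta)/\alpha}$ with $\tilde\kappa=\kappa\gamma^{-(\alpha-\beta)/(2\alpha)}$, and invoke the $\gamma\ge1$ bound already established. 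You should state that reduction explicitly rather than gesture at the grid count. Finally, a minor point: you would also want to be explicit that the chaining path from the origin to a generic grid point passes through at most $3+d$ nearest-neighbor steps per level (this is where the $3+d$ in $\mathcal{M}$ comes from), since that controls the total chaining budget.
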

\begin{proof}
We fix $\gamma\geq 1$ and consider the grid
\[\mathbb{G}_n=\left\lbrace\left(\frac{j}{2^{2n}},\frac{k_1}{2^{n}},...,\frac{k_d}{2^{n}}\right): 0\leq j\leq \gamma\varepsilon^42^{2n},0\leq k_1,...,k_d\leq \varepsilon^22^n,j,k_1,...,k_d\in\Z\right\rbrace.\]
Let
\begin{equation*}
n_0=\left\lceil \log_2\left(\gamma^{-1/2}\varepsilon^{-2}\right)\right\rceil,
\end{equation*}
and it is clear that 
\begin{equation}
\label{n0}
\log_2\left(2\gamma^{-1/2}\varepsilon^{-2}\right)>n_0\geq\log_2\left(\gamma^{-1/2}\varepsilon^{-2}\right)>n_0-1.
\end{equation}
For $n< n_0$, $\mathbb{G}_n$ contains only the origin, and for $n\geq n_0$, the grid $\mathbb{G}_n$ has at most, from \eqref{n0},
\begin{equation*}
\begin{split}
\left(\gamma\varepsilon^4 2^{2n}+1\right)\cdot\left(\varepsilon^2 2^n+1\right)^d&\leq 2^{d+1+(2+d)n}\varepsilon^{2d+4}\gamma\\
&\leq 2^{d+1+(2+d)n}\varepsilon^{2d+4}\gamma^{d/2+1}\\
&\leq 2^{2d+3}2^{(2+d)(n-n_0)}
\end{split}
\end{equation*}
many points. We will choose two parameters $0<\delta_1(\alpha,\beta)<\delta_2(\alpha,\beta)<\frac{\alpha-\beta}{\alpha}$, and let
\begin{equation*}
\delta: = \delta_2-\delta_1,
\end{equation*}
which satisfies the following constraint
\begin{equation}
\label{deltaconstraint}
2\zeta\wedge\alpha\xi=\frac{2(\alpha-\beta)}{\alpha}-2\delta,
\end{equation}
where $\xi\in\left(0,\frac{1}{\alpha}\wedge\frac{\alpha-\beta}{\alpha}\right)$ and $\zeta\in\left(0,\frac{\alpha-\beta}{\alpha}\right)$. 

We now define
\begin{equation}
\label{M}
\mathcal{M}=\frac{1-2^{-\delta_1}}{(3+d)2^{\delta n_0}},
\end{equation}
and consider the event
\[A(n,\kappa)=\left\lbrace\vert N(p)-N(q)\vert\leq \kappa \mathcal{M}\varepsilon^{\frac{2(\alpha-\beta)}{\alpha}} 2^{-\delta_1n}2^{\delta_2n_0}\text{ for all nearest neighbors $p,q\in \mathbb{G}_n$}\right\rbrace.\]
If $p,q\in \mathbb{G}_n$ are nearest neighbors in the sense of Definition \ref{definition} case $\textbf{1}$, then \eqref{spacep} implies
\[P\left(\vert N(p)-N(q)\vert> \kappa \mathcal{M}\varepsilon^{\frac{2(\alpha-\beta)}{\alpha}} 2^{-\delta_1n}2^{\delta_2n_0}\right)\leq C_1\exp\left(-\frac{C_2\kappa^2\mathcal{M}^2\varepsilon^{\frac{4(\alpha-\beta)}{\alpha}}}{2^{-n\alpha\xi}\mathcal{C}_2^2}2^{-2\delta_1n}2^{2\delta_2n_0}\right).\]
If $p,q\in \mathbb{G}_n$ are nearest neighbors in the sense of Definition \ref{definition} case $\textbf{2}$, then \eqref{timep} implies
\[P\left(\vert N(p)-N(q)\vert> \kappa \mathcal{M}\varepsilon^{\frac{2(\alpha-\beta)}{\alpha}} 2^{-\delta_1n}2^{\delta_2n_0}\right)\leq C_3\exp\left(-\frac{C_4\kappa^2\mathcal{M}^2\varepsilon^{\frac{4(\alpha-\beta)}{\alpha}}}{2^{-2n\zeta}\mathcal{C}_2^2}2^{-2\delta_1n}2^{2\delta_2n_0}\right).\]
Therefore, a union bound gives
\begin{align*}
&P(A^c(n,\kappa))\leq \sum_{\substack{p,q\in \mathbb{G}_n\\ \text{nearest neighbors}}}P\left(\vert N(p)-N(q)\vert> \kappa \mathcal{M}\varepsilon^{\frac{2(\alpha-\beta)}{\alpha}} 2^{-\delta_1n}2^{\delta_2n_0}\right)\\
&\leq C2^{(2+d)(n-n_0)}\exp\left(-\frac{C'\kappa^2\mathcal{M}^2\varepsilon^{\frac{4(\alpha-\beta)}{\alpha}}}{\mathcal{C}_2^2}2^{n(2\zeta\wedge \alpha\xi)}2^{-2\delta_1n}2^{2\delta_2n_0}\right)\\
&=C2^{(2+d)(n-n_0)}\exp\left(-\frac{C'\kappa^2\mathcal{M}^2}{\mathcal{C}_2^2}\left(\varepsilon^{\frac{4(\alpha-\beta)}{\alpha}} 2^{\frac{2n_0(\alpha-\beta)}{\alpha}}\right)2^{n(2\zeta\wedge \alpha\xi)}2^{-2\delta_1n}2^{-\frac{2n_0(\alpha-\beta)}{\alpha}}2^{2\delta_2n_0}\right)\\
&\leq C2^{(2+d)(n-n_0)}\exp\left(-\frac{C'\kappa^2\mathcal{M}^2}{\mathcal{C}_2^2\gamma^{\frac{(\alpha-\beta)}{\alpha}}}2^{(2\zeta\wedge \alpha\xi-2\delta_1)(n-n_0)}\right),
\end{align*}
where $C,C'$ are positive constants depending only on $\alpha,\beta,T,d$. The last inequality follows from that $\varepsilon^{\frac{4(\alpha-\beta)}{\alpha}} 2^\frac{2n_0(\alpha-\beta)}{\alpha}\geq\gamma^{-\frac{(\alpha-\beta)}{\alpha}}$ by the definition of $n_0$ in \eqref{n0}, and our choices of $\delta_1,\delta_2$ and $\delta$ in \eqref{deltaconstraint}. Let $A(\kappa)=\bigcap\limits_{n\geq n_0}A(n,\kappa)$ and we can bound $P(A^c(\kappa))$ by summing $P(A^c(n,\kappa))$ for all $n\geq n_0$,
\begin{align*}
P\left(A^c(\kappa)\right)&\leq\sum_{n\geq n_0}P\left(A^c(n,\kappa)\right)\\
&\leq \sum_{n\geq n_0}C2^{(2+d)(n-n_0)}\exp\left(-\frac{C'\kappa^2\mathcal{M}^2}{\mathcal{C}_2^2\gamma^{\frac{(\alpha-\beta)}{\alpha}}}2^{(2\zeta\wedge \alpha\xi-2\delta_1)(n-n_0)}\right)\\
&\leq C_5\exp\left(-\frac{C'\kappa^2\mathcal{M}^2}{\mathcal{C}_2^2\gamma^{\frac{(\alpha-\beta)}{\alpha}}}\right).
\end{align*}
From definitions \eqref{n0} and \eqref{M}, we have
\begin{align*}
P\left(A^c(\kappa)\right)&\leq C_5\exp\left(-\frac{C'\kappa^22^{-2\delta n_0}}{\mathcal{C}_2^2\gamma^{\frac{(\alpha-\beta)}{\alpha}}}\right)\\
&\leq C_5\exp\left(-\frac{C_6\kappa^2(\gamma\varepsilon^4)^\delta}{\mathcal{C}_2^2\gamma^{\frac{(\alpha-\beta)}{\alpha}}}\right)
\end{align*}
for any $\delta\in\left(0,\frac{\alpha-\beta}{\alpha}\right)$, we thus obtain
\begin{align*}
P\left(A^c(\kappa)\right)&\leq \inf_{\delta\in\left(0,\frac{\alpha-\beta}{\alpha}\right)}C_5\exp\left(-\frac{C_6\kappa^2(\gamma\varepsilon^4)^\delta}{\mathcal{C}_2^2\gamma^{\frac{(\alpha-\beta)}{\alpha}}}\right)\\
&\leq C_5\exp\left(-\frac{C_6\kappa^2}{\mathcal{C}_2^2\gamma^{\frac{\alpha-\beta}{\alpha}}}\right).
\end{align*}

Now we consider a point $(t,\textbf{x})$, which is in a grid $\mathbb{G}_n$ for some $n \geq n_0$. From arguments similar to page 128 of \cite{dalang2009minicourse}, we can find a sequence of points from the origin to $(t,\textbf{x})$ as $(0,\textbf{0})= p_0,p_1,...,p_k = (t,\textbf{x})$ such that each pair $p_jp_{j+1}$ is the nearest neighbor in some grid $\mathbb{G}_m, n_0\leq m \leq n$. At most $(3+d)$ such pairs are nearest neighbors in any given grid due to the fact that, along that path and for a given grid, there are at most $3$ nearest neighbor pairs in the time direction and maximum of $d$ nearest neighbor pairs in the space direction. On the event $A(\kappa)$, we have
\begin{align*}
\vert N(t,\textbf{x})\vert\leq \sum_{j=0}^{k-1}\vert N(p_j)-N(p_{j+1})\vert\leq (3+d)\sum_{n\geq n_0}\kappa \mathcal{M}\varepsilon^{\frac{2(\alpha-\beta)}{\alpha}} 2^{-\delta_1n}2^{\delta_2n_0}\leq\kappa\varepsilon^{\frac{2(\alpha-\beta)}{\alpha}} .
\end{align*}

Points in $\bigcup\limits_n\mathbb{G}_n$ are dense in $[0,\gamma\varepsilon^4]\times[0,\varepsilon^2]$, and we may extend $N(t,\textbf{x})$ to a continuous version using Theorem $13$ in \cite{dalang1999extending}. Therefore, for $\gamma\geq 1$,
\[
P\left(\sup_{\substack{0\leq t\leq\gamma\varepsilon^4\\ \textbf{x}\in\left[0,\varepsilon^2\right]^d}}\vert N(t,\textbf{x})\vert>\kappa\varepsilon^{\frac{2(\alpha-\beta)}{\alpha}}\right)\leq P(A^c(\kappa))\leq  C_5\exp\left(-\frac{C_6\kappa^2}{\mathcal{C}_2^2\gamma^{\frac{\alpha-\beta}{\alpha}}}\right).
\]
For $0<\gamma<1$, we divide the interval $[0,\varepsilon^2]$ into $\frac{1}{\sqrt{\gamma}}$ pieces each of length $\sqrt{\gamma}\varepsilon^2$ in each coordinate, and then a union bound implies that
\[P\left(\sup_{\substack{0\leq t\leq\gamma\varepsilon^4\\ \textbf{x}\in\left[0,\varepsilon^2\right]^d}}\vert N(t,\textbf{x})\vert>\kappa\varepsilon^{\frac{2(\alpha-\beta)}{\alpha}}\right)\leq \frac{1}{\sqrt{\gamma^d}} P\left(\sup_{\substack{0\leq t\leq\gamma\varepsilon^4\\ \textbf{x}\in\left[0,\sqrt{\gamma}\varepsilon^2\right]^d}}\vert N(t,\textbf{x})\vert>\kappa\varepsilon^{\frac{2(\alpha-\beta)}{\alpha}}\right)\]
\[=\frac{1}{\sqrt{\gamma^d}}P\left(\sup_{\substack{0\leq t\leq(\sqrt{\gamma}\varepsilon^2)^2\\ \textbf{x}\in\left[0,\sqrt{\gamma}\varepsilon^2\right]^d}}\vert N(t,\textbf{x})\vert>\frac{\kappa}{\gamma^{\frac{\alpha-\beta}{2\alpha}}}\left(\gamma^{1/4}\varepsilon\right)^{\frac{2(\alpha-\beta)}{\alpha}}\right)\leq \frac{C_5}{\sqrt{\gamma^d}}\exp\left(-\frac{C_6\kappa^2}{ \mathcal{C}_2^2\gamma^{\frac{\alpha-\beta}{\alpha}}}\right).\]
As a result,
\[
P\left(\sup_{\substack{0\leq t\leq\gamma\varepsilon^4\\ \textbf{x}\in \left[0,\varepsilon^2\right]^d}}\vert N(t,\textbf{x})\vert>\kappa\varepsilon^{\frac{2(\alpha-\beta)}{\alpha}}\right)\leq \frac{C_5}{1\wedge \sqrt{\gamma^d}}\exp\left(-\frac{C_6\kappa^2}{\mathcal{C}_2^2\gamma^{\frac{\alpha-\beta}{\alpha}}}\right).
\]
\end{proof}

\begin{remark}
\label{largeremark}
If we assume that $\sigma$ in \eqref{noiseterm} satisfies $\vert \sigma(s,\textbf{y},u(s,\textbf{y}))\vert\leq C\left(\gamma\varepsilon^4\right)^{\frac{\alpha-\beta}{2\alpha}}$, then the probability in Lemma \ref{larged} is bounded above by
\[\frac{C_5}{1\wedge \sqrt{\gamma^d}}\exp\left(-\frac{C_6\kappa^2}{(\gamma\varepsilon^2)^{\frac{2(\alpha-\beta)}{\alpha}}}\right),\]
which can be proved similarly to the above lemma.
\end{remark}

\section{Proof of Proposition \ref{prop}}
\label{proofprop}
\subsection*{Proof of Proposition \ref{prop}(a)}
\begin{proof}
Recall $F_n$ in \eqref{Fn}. The Markov property of $u(t,\cdot)$ (see Theorem 9.14 on page 248 of \cite{da2014stochastic}) implies
\begin{equation*}
P\left(F_{j}\vert\sigma\{u(t_i,\cdot)\}_{0\leq i<j}\right)=P\left(F_{j}\vert u(t_{j-1},\cdot)\right).
\end{equation*}
If we can show that $P\left(F_{j}\vert u(t_{j-1},\cdot)\right)$ has a uniform bound $\textbf{C}_4\exp\left(-\frac{\textbf{C}_5}{t_1^{\frac{\beta}{\alpha d}}}\right)$ that does not depend on $j$, then the same bound holds for the conditional probability $P\left(F_{j}\Big\vert \bigcap\limits_{k=0}^{j-1}F_{k}\right)$, which is conditioned on a realization of $u(t_k,\cdot),0\leq k<j$. Thus, it is enough to prove
\begin{equation}
\label{F1}
P(F_1)\leq \textbf{C}_4\exp\left(-\frac{\textbf{C}_5}{t_1^{\left(\frac{\beta}{\alpha d}\wedge \frac{\alpha-\beta}{\alpha}\right)}}\right),
\end{equation}
where $\textbf{C}_4$, $\textbf{C}_5$ do not depend on $u_0$ and $\vert u(t,\textbf{x})\vert < t_1^{\frac{\alpha-\beta}{2\alpha}}$ for every $(t,\textbf{x})\in R_{0,n_1-1}$ defined in \eqref{Pij}. 

Now we show how to find a uniform bound for $P(F_1)$ with starting from considering the truncated function
\[f_{t_1}(\textbf{x})=\begin{cases}
\textbf{x}, & \vert \textbf{x}\vert\leq t_1^{\frac{\alpha-\beta}{2\alpha}};\\
\frac{\textbf{x}}{\vert \textbf{x}\vert}\cdot t_1^{\frac{\alpha-\beta}{2\alpha}}, & \vert \textbf{x}\vert> t_1^{\frac{\alpha-\beta}{2\alpha}}.
\end{cases}\]
Particularly, $\vert f_{t_1}(\textbf{x})\vert\leq t_1^{\frac{\alpha-\beta}{2\alpha}}$ and consider the following two equations
\begin{equation}
\begin{split}
\label{shef}
&\partial_tv(t,\textbf{x})=-(-\Delta)^{\alpha/2}v(t,\textbf{x})+\sigma(t,\textbf{x},f_{t_1}(v(t,\textbf{x}))) \dot{F}(t,\textbf{x}),\\
&\partial_tv_g(t,\textbf{x})=-(-\Delta)^{\alpha/2}v_g(t,\textbf{x})+\sigma(t,\textbf{x},f_{t_1}(u_0(\textbf{x}))) \dot{F}(t,\textbf{x})
\end{split}
\end{equation}
with the same initial profile $u_0(\textbf{x})$. Hence, we can decompose $v(t,\textbf{x})$ into
\[v(t,\textbf{x})=v_g(t,\textbf{x})+D(t,\textbf{x}),\]
with
\[D(t,\textbf{x})=\int_{[0,t]\times\mathbb{T}^d} \bar{p}(t-s,\textbf{x}-\textbf{y})[\sigma(s,\textbf{y},f_{t_1}(v(s,\textbf{y})))-\sigma(s,\textbf{y},f_{t_1}(u_0(\textbf{y})))]F(dsd\textbf{y}).\]
The Lipschitz property on the third variable of $\sigma(t,\textbf{x},u)$ in \eqref{hypothesis1} gives
\begin{equation}
\label{sigmadiff1}
\begin{split}
\vert \sigma(s,\textbf{y},f_{t_1}(v(s,\textbf{y})))-\sigma(s,\textbf{y},f_{t_1}(u_0(\textbf{y})))\vert&\leq \mathcal{D}\vert f_{t_1}(v(s,\textbf{y}))-f_{t_1}(u_0(\textbf{y}))\vert\\
&\leq 2\mathcal{D}t_1^{\frac{\alpha-\beta}{2\alpha}}.
\end{split}
\end{equation}

We recall that $R_{i,j}$ in \eqref{Pij} and define a new sequence of events,
\[H_j=\left\lbrace\vert v(t,\textbf{x})\vert\leq t_1^{\frac{\alpha-\beta}{2\alpha}},\forall (t,\textbf{x})\in R_{1,j}\setminus R_{1,j-1}\right\rbrace.\]
Clearly, the property of $f_{t_1}(\textbf{x})$ and \eqref{Fn} imply
\[F_{1}=\bigcap_{j=-n_1+1}^{n_1-1}H_j.\]
Additionally, we define another two sequences of events
\[A_j=\left\lbrace\vert v_g(t,\textbf{x})\vert\leq 2t_1^{\frac{\alpha-\beta}{2\alpha}},\forall (t,\textbf{x})\in R_{1,j}\setminus R_{1,j-1}\right\rbrace\]
and
\[B_j=\left\lbrace\vert D(t,\textbf{x})\vert>t_1^{\frac{\alpha-\beta}{2\alpha}}, \text{~for some~}(t,\textbf{x})\in R_{1,j}\setminus R_{1,j-1}\right\rbrace.\]
It is straightforward to check that
\[H_j^c\supset A_j^c\cap B_j^c,\]
which leads to
\begin{equation}
\label{sumprob}
\begin{split}
P(F_{1})&= P\left(\bigcap_{j=-n_1+1}^{n_1-1}H_j\right)\leq P\left(\bigcap_{j=-n_1+1}^{n_1-1}[A_j\cup B_j]\right)\\
&\leq P\left(\left(\bigcap_{j=-n_1+1}^{n_1-1}A_j\right)\bigcup\left(\bigcup_{j=-n_1+1}^{n_1-1}B_j\right)\right)\\
&\leq P\left(\bigcap_{j=-n_1+1}^{n_1-1}A_j\right)+P\left(\bigcup_{j=-n_1+1}^{n_1-1}B_j\right)\\
&\leq P\left(\bigcap_{j=-n_1+1}^{n_1-1}A_j\right)+\sum_{j=-n_1+1}^{n_1-1} P(B_j),
\end{split}
\end{equation}
where the second inequality can be shown by using induction. 

Moreover, for $j=-n_1+1$,
\begin{equation}
\label{b1}
B_j \subseteq\left\lbrace\sup_{\substack{0\leq s\leq c_0\varepsilon^4\\ \textbf{y}\in \left[(-n_1+1)\varepsilon^2,(-n_1+2)\varepsilon^2\right]^d}}\vert D(s,\textbf{y})\vert>t_1^{\frac{\alpha-\beta}{2\alpha}}\right\rbrace,
\end{equation}
and, for $j>-n_1+1$,
\begin{equation}
\label{bj}
\begin{split}
B_j&\subseteq \left\lbrace\sup_{(t,\textbf{x})\in R_{1,j}\setminus R_{1,j-1}}\vert D(t,\textbf{x})\vert>t_1^{\frac{\alpha-\beta}{2\alpha}}\right\rbrace\\&\subseteq\bigcup_{(t,\textbf{x})\in R_{1,j-1}\setminus R_{1,j-2}}\left\lbrace\sup_{\substack{0\leq s\leq c_0\varepsilon^4\\ \textbf{y}\in \textbf{x}+\left[0,\varepsilon^2\right]^d}}\vert D(s,\textbf{y})\vert>t_1^{\frac{\alpha-\beta}{2\alpha}}\right\rbrace.\\
\end{split}
\end{equation}
From \eqref{sigmadiff1} and Remark \ref{largeremark}, we have
\begin{equation}
\label{bsig}
P\left(\sup_{\substack{0\leq s\leq c_0\varepsilon^4\\ \textbf{y}\in \textbf{x}+\left[0,\varepsilon^2\right]^d}}\vert D(s,\textbf{y})\vert>t_1^{\frac{\alpha-\beta}{2\alpha}}\right)\leq \frac{C_5}{1\wedge \sqrt{{c_0}^d}}\exp\left(-\frac{C_6}{4\mathcal{D}^2t_1^{\frac{\alpha-\beta}{\alpha}}}\right).
\end{equation}
Therefore, \eqref{b1} implies
\begin{equation}
\label{b1upper}
P(B_{-n_1+1})\leq P\left(\sup_{\substack{0\leq s\leq c_0\varepsilon^4\\ \textbf{y}\in \left[0,\varepsilon^2\right]^d}}\vert D(s,\textbf{y})\vert>t_1^{\frac{\alpha-\beta}{2\alpha}}\right),
\end{equation}
and \eqref{bj} implies, for $j>-n_1+1$,
\begin{equation}
\label{bjupper}
P(B_j)\leq \left[(j+n_1-1)^d-(j+n_1-2)^d\right]P\left(\sup_{\substack{0\leq s\leq c_0\varepsilon^4\\ \textbf{y}\in \left[0,\varepsilon^2\right]^d}}\vert D(s,\textbf{y})\vert>t_1^{\frac{\alpha-\beta}{2\alpha}}\right).
\end{equation}
Using \eqref{bsig}, \eqref{b1upper} and \eqref{bjupper}, we therefore obtain that
\begin{equation}
\label{probB}
\begin{split}
\sum_{j=-n_1+1}^{n_1-1}P(B_j)&\leq \left((2n_1-2)^d+1\right)P\left(\sup_{\substack{0\leq s\leq c_0\varepsilon^4\\ \textbf{y}\in \left[0,\varepsilon^2\right]^d}}\vert D(s,\textbf{y})\vert>t_1^{\frac{\alpha-\beta}{2\alpha}}\right)\\
&\leq\frac{C(d)}{\varepsilon^{2d}}\cdot\frac{C_5}{1\wedge \sqrt{{c_0}^d}}\exp\left(-\frac{C_6}{4\mathcal{D}^2t_1^{\frac{\alpha-\beta}{\alpha}}}\right).
\end{split}
\end{equation}

Note that in equation \eqref{shef} for $v_g$, $\sigma$ is deterministic and hence $v_g$ is a Gaussian random field. Then the following lemma provides a lower bound for variance of the noise term $N(t_1,\textbf{x})$ and an upper bound on the decay of covariance between two random variables $N(t_1,\textbf{x})$, $N(t_1,\textbf{y})$ as $\vert \textbf{x}-\textbf{y}\vert$ increases.

\begin{lemma}
\label{varbound}
If we consider different noise terms $N(t_1,\textbf{x})$, $N(t_1,\textbf{y})$ with a deterministic $\sigma(t,\textbf{x},u)=\sigma(t,\textbf{x})$, then there exist constants $C_7,C_8$ and $C_9>0$ depending only on $\mathcal{C}_1$, $\mathcal{C}_2$, $d$, $\alpha$, and $\beta$ such that
\begin{equation*}
C_7t_1^{\frac{\alpha-\beta}{\alpha}}\leq {\rm Var}[N(t_1,\textbf{x})]\leq C_8t_1^{\frac{\alpha-\beta}{\alpha}}
\end{equation*}
and
\begin{equation*}
{\rm Cov}[N(t_1,\textbf{x}),N(t_1,\textbf{y})]\leq C_9t_1\left\vert \textbf{x}-\textbf{y}\right\vert^{-\beta}.
\end{equation*}
\end{lemma}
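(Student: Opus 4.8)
The approach is to begin from the second-moment formula for the Walsh stochastic integral. Since $\sigma$ is deterministic here and $N(t_1,\cdot)$ is centered, the computation already carried out in the proofs of Lemmas \ref{spatialregularity} and \ref{timeregularity} gives
\begin{equation*}
{\rm Cov}[N(t_1,{\bf x}),N(t_1,{\bf y})]=\int_0^{t_1}\int_{\mathbb{T}^d}\int_{\mathbb{T}^d}\bar{p}(t_1-s,{\bf x-z})\bar{p}(t_1-s,{\bf y-w})\sigma(s,{\bf z})\sigma(s,{\bf w})\Lambda({\bf w-z})\,d{\bf w}\,d{\bf z}\,ds.
\end{equation*}
Because $\bar{p}\geq 0$, $\Lambda\geq 0$, and $0<\mathcal{C}_1\leq\sigma\leq\mathcal{C}_2$, the factor $\sigma(s,{\bf z})\sigma(s,{\bf w})$ can be bounded below by $\mathcal{C}_1^2$ (after setting ${\bf x}={\bf y}$) and above by $\mathcal{C}_2^2$, which reduces both assertions to deterministic kernel estimates.

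For the variance bound I take ${\bf x}={\bf y}$ and expand the spatial integral by the Fourier series \eqref{pfourier}; exactly as in \eqref{pconv} it collapses to a dimensional constant times $\sum_{{\bf n}\in\mathbb{Z}^d}\lambda({\bf n})e^{-2\pi^\alpha|{\bf n}|^\alpha(t_1-s)}$. Integrating in $s$ over $[0,t_1]$ and discarding all frequencies except those with $|{\bf n}|\leq\pi^{-1}t_1^{-1/\alpha}$ — on which $2\pi^\alpha|{\bf n}|^\alpha s\leq 2$ for $s\leq t_1$, hence $\int_0^{t_1}e^{-2\pi^\alpha|{\bf n}|^\alpha s}\,ds\geq e^{-2}t_1$ — leaves a lower bound of order $\mathcal{C}_1^2\,t_1\sum_{1\leq|{\bf n}|\leq\pi^{-1}t_1^{-1/\alpha}}\lambda({\bf n})$. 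Using $\lambda({\bf n})\geq c_1|{\bf n}|^{-(d-\beta)}$ from \eqref{lambdabound} together with the lattice-point estimate $\sum_{1\leq|{\bf n}|\leq R}|{\bf n}|^{-(d-\beta)}\geq cR^{\beta}$ (valid once $R$ exceeds an absolute constant, which holds for the small values $t_1=c_0\varepsilon^4$ relevant to Proposition \ref{prop}) with $R=\pi^{-1}t_1^{-1/\alpha}$, this sum is $\gtrsim t_1^{-\beta/\alpha}$, so ${\rm Var}[N(t_1,{\bf x})]\geq C_7\, t_1\cdot t_1^{-\beta/\alpha}=C_7 t_1^{(\alpha-\beta)/\alpha}$.

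For the covariance, after replacing $\sigma(s,{\bf z})\sigma(s,{\bf w})$ by $\mathcal{C}_2^2$, it suffices to show that $G(r):=\int_{\mathbb{T}^d}\int_{\mathbb{T}^d}\bar{p}(r,{\bf x-z})\bar{p}(r,{\bf y-w})\Lambda({\bf w-z})\,d{\bf w}\,d{\bf z}\leq C|{\bf x-y}|^{-\beta}$ uniformly in $r\in(0,t_1]$, since then $\int_0^{t_1}G(t_1-s)\,ds\leq Ct_1|{\bf x-y}|^{-\beta}$, which is the claim with $C_8=C\mathcal{C}_2^2$. Writing $\rho=|{\bf x-y}|$, I split the domain of integration into (A) $\{|{\bf x-z}|\leq\rho/4,\ |{\bf y-w}|\leq\rho/4\}$, (B) $\{|{\bf x-z}|>\rho/4\}$, and (C) $\{|{\bf x-z}|\leq\rho/4,\ |{\bf y-w}|>\rho/4\}$. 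On (A) the triangle inequality forces $|{\bf w-z}|\geq\rho/2$, so $\Lambda({\bf w-z})\leq C\rho^{-\beta}$ while the remaining integral $\int_{\mathbb{T}^d}\int_{\mathbb{T}^d}\bar{p}\,\bar{p}$ is at most $1$. On (B), doing the ${\bf w}$-integral first and applying \eqref{pconv} bounds the contribution by $Cr^{-\beta/\alpha}\int_{|{\bf x-z}|>\rho/4}\bar{p}(r,{\bf x-z})\,d{\bf z}$; the classical fractional heat-kernel bound $\bar{p}(r,{\bf u})\leq Cr(r^{1/\alpha}+|{\bf u}|)^{-(d+\alpha)}$ for $r\leq 1$ (obtained from the $\mathbb{R}^d$ estimates of \cite{bogdan2007estimates} by periodization, as in the proof of Lemma \ref{pdiff}) gives $\int_{|{\bf x-z}|>\rho/4}\bar{p}(r,{\bf x-z})\,d{\bf z}\leq C(1\wedge r\rho^{-\alpha})$, and a short case split on whether $r\leq\rho^\alpha$ or $r>\rho^\alpha$ shows $r^{-\beta/\alpha}(1\wedge r\rho^{-\alpha})\leq C\rho^{-\beta}$ in both cases. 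Region (C) is treated symmetrically. Summing the three contributions yields $G(r)\leq C\rho^{-\beta}$ with $C$ depending only on $\alpha,\beta,d,\mathcal{C}_2$.

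I expect the covariance bound to be the main obstacle: the estimate \eqref{pconv} on its own only gives $G(r)\leq Cr^{-\beta/\alpha}$, which after integration in $s$ would produce $t_1^{(\alpha-\beta)/\alpha}$ instead of the sharper $t_1$, so one genuinely has to exploit the spatial separation $\rho$ of the two heat kernels through the tail estimate and the case analysis above. A secondary technical point is keeping track of the torus periodizations of $\bar{p}$ and of the Riesz kernel $\Lambda$ when invoking the $\mathbb{R}^d$ pointwise heat-kernel bounds; this is routine and is handled exactly as in the proof of Lemma \ref{pdiff}.
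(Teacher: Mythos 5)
Your variance lower bound is essentially the paper's argument: both reduce via Fourier to $\sum_{\bf n}\lambda({\bf n})\int_0^{t_1}e^{-2\pi^\alpha|{\bf n}|^\alpha(t_1-s)}\,ds$ and then invoke \eqref{lambdabound} with a radial comparison. The paper keeps the full sum, bounds it below by an integral from $1$ to $\infty$, and substitutes $w=2\pi^\alpha x^\alpha t_1$, which works for every $t_1\leq 1$; you instead truncate at $|{\bf n}|\leq\pi^{-1}t_1^{-1/\alpha}$, which you correctly note requires $t_1$ small enough for the truncated sum to be nonempty --- harmless in the intended application $t_1=c_0\varepsilon^4$. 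The covariance bound, however, is a genuinely different route. The paper stays in Fourier: after bounding $\sigma\sigma\leq\mathcal{C}_2^2$, it replaces $\int_0^{t_1}e^{-2\pi^\alpha|{\bf n}|^\alpha(t_1-s)}\,ds$ by $t_1$ term by term inside the sum $\sum_{\bf n}\lambda({\bf n})e^{\pi i{\bf n}\cdot({\bf x-y})}(\cdots)$ and then identifies the remaining sum with $\Lambda({\bf x-y})=|{\bf x-y}|^{-\beta}$. That termwise replacement is delicate because the factors $\cos(\pi{\bf n}\cdot({\bf x-y}))$ change sign; as stated it amounts to the pointwise contraction $\bar{p}(2r,\cdot)*\Lambda\leq\Lambda$ of the fractional heat semigroup applied to the periodized Riesz kernel, which the paper does not spell out. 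Your spatial decomposition into regions A, B, C sidesteps this entirely: region A extracts $\rho^{-\beta}$ from the separation $|{\bf w-z}|\geq\rho/2$, while B and C combine the one-sided convolution bound \eqref{pconv} with the heat-kernel tail estimate $\int_{|{\bf u}|>\rho/4}\bar{p}(r,{\bf u})\,d{\bf u}\leq C(1\wedge r\rho^{-\alpha})$ and the case split on whether $r\leq\rho^\alpha$ or $r>\rho^\alpha$, a step that correctly uses $\beta<\alpha$ and, as you observe, is precisely what \eqref{pconv} alone misses, since that bound only gives $G(r)\leq Cr^{-\beta/\alpha}$. The paper's route is shorter and purely Fourier; yours needs the pointwise heat-kernel bound plus some periodization bookkeeping, but in exchange it is manifestly rigorous term by term with no hidden monotonicity input.
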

\begin{proof} We use the Burkholder-Davis-Gundy inequality, assumption of $\sigma$ in \eqref{hypothesis2} and the same argument in \eqref{ppconv} to achieve
\begin{equation}
\begin{split}
\label{varlowbound}
&\textrm{Var}[N(t_1,\textbf{x})]=\mathbb{E}\left[(N(t_1,\textbf{x}))^2\right]\\
&\geq C\mathbb{E}\left\vert \int_0^{t_1}\int_{\mathbb{T}^d}\int_{\mathbb{T}^d}\bar{p}(t_1-s,\textbf{x}-\textbf{y})\bar{p}(t_1-s,\textbf{x}-\textbf{z})\sigma(s,\textbf{y})\sigma(s,\textbf{z})\Lambda(\textbf{y}-\textbf{z})d\textbf{y}d\textbf{z}ds\right\vert\\
&=C\int_0^{t_1}\int_{\mathbb{T}^d}\int_{\mathbb{T}^d}\bar{p}(t_1-s,\textbf{x}-\textbf{y})\bar{p}(t_1-s,\textbf{x}-\textbf{z})\sigma(s,\textbf{y})\sigma(s,\textbf{z})\Lambda(\textbf{y}-\textbf{z})d\textbf{y}d\textbf{z}ds\\
&\geq C\mathcal{C}_1^2\left(\int_0^{t_1}\sum_{\textbf{n}\in\Z^d} \lambda(\textbf{n})\exp\left(-2(2\pi\vert \textbf{n}\vert)^\alpha(t_1-s)\right)ds\right)\\
&\geq C(d,\beta,\mathcal{C}_1)\left(\int_0^{t_1}\int_{\R^d}\vert\textbf{x}\vert^{\beta-d}\exp\left(-2(2\pi\vert \textbf{x}\vert)^\alpha(t_1-s)\right)d\textbf{x}ds\right).
\end{split}
\end{equation}
The last inequality results from \eqref{lambdabound} and the fact that $\exp\left(-2(2\pi\vert \textbf{n}\vert)^\alpha(t_1-s)\right)$ decreases as $\vert \textbf{n}\vert$ increases. Using Fubini's theorem and changing variable with $w=2^{\alpha+1}\pi^\alpha x^\alpha t_1$, we continue \eqref{varlowbound} as follows,
\begin{equation*}
\begin{split}
\textrm{Var}[N(t_1,\textbf{x})]&\geq C(d,\beta,\mathcal{C}_1)\left(\int_0^{t_1}\int_0^\infty x^{\beta-d}\exp\left(-2(2\pi x)^\alpha(t_1-s)\right)x^{d-1}dxds\right)\\
&=C(d,\beta,\mathcal{C}_1)\left(\int_0^\infty\int_0^{t_1} x^{\beta-1}\exp\left(-2(2\pi x)^\alpha(t_1-s)\right)dsdx\right)\\
&=C(d,\beta,\mathcal{C}_1)\int_0^\infty\frac{1-e^{-2^{\alpha+1}\pi^\alpha x^\alpha t_1}}{2^{\alpha+1}\pi^\alpha x^{1+\alpha-\beta}}dx\\
&=C(d,\alpha,\beta,\mathcal{C}_1)t_1^{\frac{\alpha-\beta}{\alpha}}\int_{0}^\infty \frac{1-e^{-w}}{w^{2-\beta/\alpha}}dw,
\end{split}
\end{equation*}
where the last integral converges when $0<\beta<\alpha\wedge d$, which completes the proof of the first part. For the other direction,
\begin{equation*}
\begin{split}
\textrm{Var}[N(t_1,\textbf{x})]&\leq C'\mathbb{E}\left\vert \int_{[0,{t_1}]\times{\mathbb{T}^{2d}}}\bar{p}(t_1-s,\textbf{x}-\textbf{y})\bar{p}(t_1-s,\textbf{x}-\textbf{z})\sigma(s,\textbf{y})\sigma(s,\textbf{z})\Lambda(\textbf{y}-\textbf{z})d\textbf{y}d\textbf{z}ds\right\vert\\
&\leq C'\int_0^{t_1}\int_{\mathbb{T}^d}\int_{\mathbb{T}^d}\bar{p}(t_1-s,\textbf{x}-\textbf{y})\bar{p}(t_1-s,\textbf{x}-\textbf{z})\sigma(s,\textbf{y})\sigma(s,\textbf{z})\Lambda(\textbf{y}-\textbf{z})d\textbf{y}d\textbf{z}ds\\
&\leq C'\mathcal{C}_2^2\left(\int_0^{t_1}\sum_{\textbf{n}\in\Z^d} \lambda(\textbf{n})\exp\left(-2(2\pi\vert \textbf{n}\vert)^\alpha(t_1-s)\right)ds\right)\\
&\leq C'(d,\beta,\mathcal{C}_2)\left(\lambda(\textbf{0})t_1^{\frac{\alpha-\beta}{\alpha}}+\int_0^{t_1}\int_{\R^d}\vert\textbf{x}\vert^{\beta-d}\exp\left(-2(2\pi\vert \textbf{x}\vert)^\alpha(t_1-s)\right)d\textbf{x}ds\right).
\end{split}
\end{equation*}
The last inequality follows from \eqref{lambdabound} and the fact that $t_1<t_1^{\frac{\alpha-\beta}{\alpha}}$ for $t_1<1$. Using the same argument as above, we end up with
\begin{equation*}
{\rm Var}[N(t_1,\textbf{x})]\leq C_8t_1^{\frac{\alpha-\beta}{\alpha}}.
\end{equation*}

Additionally, we apply \eqref{lambdafourier}, \eqref{Lambdabound} and use the fact that $1-e^{-x}\leq x$ to derive an upper bound on the covariance of $N(t_1,\textbf{x})$ and $N(t_1,\textbf{y})$ when $\textbf{x}\neq \textbf{y}$,

\begin{equation*}
\begin{split}
&\textrm{Cov}[N(t_1,\textbf{x}),N(t_1,\textbf{y})]=\E[N(t_1,\textbf{x})N(t_1,\textbf{y})]\\
&\leq \mathcal{C}_2^2\left(\sum_{\textbf{n}\in \Z^d}\lambda(\textbf{n})\exp(2\pi i\textbf{n}\cdot (\textbf{x}-\textbf{y}))\int_0^{t_1}\exp\left(-2(2\pi\vert \textbf{n}\vert)^\alpha(t_1-s)\right)ds\right)\\
&\leq C_9(\mathcal{C}_2)t_1\sum_{\textbf{n}\in \Z^d}\lambda(\textbf{n})\exp(2\pi i\textbf{n}\cdot (\textbf{x}-\textbf{y}))=C_9(\mathcal{C}_2)t_1\Lambda(\textbf{x}-\textbf{y})\leq C_9t_1\left\vert \textbf{x}-\textbf{y}\right\vert^{-\beta},
\end{split}
\end{equation*}
where the last line follows by the definition of $\Lambda(\textbf{x})$ from \eqref{lambdafourier} and \eqref{Lambdabound}.
\end{proof}

We turn our focus on how to compute the upper bound on $P\left(\bigcap\limits_{j=-n_1+1}^{n_1-1}A_j\right)$ in \eqref{sumprob}, so define a sequence of events involving $v_g$, 
\[I_j=\left\lbrace\vert v_g(t,\textbf{x})\vert\leq 2t_1^{\frac{\alpha-\beta}{2\alpha}}, \forall(t,\textbf{x})\in R_{1,j}\right\rbrace\quad\text{and}\quad I_{-n_1}=\Omega.\]
We can write $P\left(\bigcap\limits_{j=-n_1+1}^{n_1-1}A_j\right)$ in terms of a product of conditional probabilities as
\begin{equation}
\label{Acond}
P\left(\bigcap_{j=-n_1+1}^{n_1-1}A_j\right)=P(I_{n_1-1})=P(I_{-n_1})\prod_{j=-n_1+1}^{n_1-1}\frac{P(I_j)}{P(I_{j-1})}=\prod_{j=-n_1+1}^{n_1-1}P(I_j\vert I_{j-1}).
\end{equation}
Let $\mathcal{G}_j$ be the $\sigma-$algebra generated by 
\[N_{(t_1)}(t,\textbf{x}):=\int_{[0,t]\times\mathbb{T}^d} \bar{p}(t-s,\textbf{x}-\textbf{y})\sigma(s,\textbf{y},f_{t_1}(u_0(\textbf{y})))F(dsd\textbf{y}),~~(t,\textbf{x})\in R_{1,j},\]
which is the noise term of $v_g(t_1,\textbf{x})$ in \eqref{shef}. Once we prove that there is a uniform bound on $P\left(I_j\vert \mathcal{G}_{j-1}\right)$, the same bound holds for the conditional probability $P\left(I_j\vert I_{j-1}\right)$. Notice that $\sigma(s,\textbf{y},f_{t_1}(u_0(\textbf{y})))$ is deterministic and uniformly bounded, so by Lemma \ref{varbound}, we have
\begin{equation*}
\textrm{Var}\left[N_{(t_1)}(t_1,\textbf{x})\right]\geq C_7 t_1^{\frac{\alpha-\beta}{\alpha}},
\end{equation*}
and for $(t,\textbf{x})\in R_{1,j}\setminus R_{1,j-1}$, one can decompose
\begin{equation}
\label{vgdecom}
v_g(t,\textbf{x})=\int_{\mathbb{T}^d}\bar{p}(t,\textbf{x}-\textbf{y})u_0(\textbf{y})d\textbf{y}+X+Y,
\end{equation}
where $X=\E\left[N_{(t_1)}(t,\textbf{x})\vert \mathcal{G}_{j-1}\right]$ is a Gaussian random variable, which can be written as
\begin{equation}
\label{Xdecom}
X=\sum_{(t,\textbf{x})\in R_{1,j-1}}\eta^{(j)}(t,\textbf{x})N_{(t_1)}(t,\textbf{x}),
\end{equation}
for some coefficients $\left(\eta^{(j)}(t,\textbf{x})\right)_{(t,\textbf{x})\in R_{1,j-1}}$. Define
\begin{equation*}
Y=N_{(t_1)}(t,\textbf{x})-X,
\end{equation*}
then the conditional variance of $Y$ can be rewritten as
\begin{align*}
&\textrm{Var}(Y\vert\mathcal{G}_{j-1})=\E [(N_{(t_1)}(t,\textbf{x})-X)^2\vert\mathcal{G}_{j-1}]-(\E[N_{(t_1)}(t,\textbf{x})-X\vert\mathcal{G}_{j-1}])^2\\
&=\E[(N_{(t_1)}(t,\textbf{x})-\E[N_{(t_1)}(t,\textbf{x})\vert \mathcal{G}_{j-1}])^2\vert\mathcal{G}_{j-1}]=\textrm{Var}[N_{(t_1)}(t,\textbf{x})\vert \mathcal{G}_{j-1}].
\end{align*}
Since $Y=N_{(t_1)}(t,\textbf{x})-X$ is independent of $\mathcal{G}_{j-1}$, we may express $\textrm{Var}(Y)$ as
\begin{equation*}
\textrm{Var}(Y)=\textrm{Var}(Y\vert\mathcal{G}_{j-1})=\textrm{Var}[N_{(t_1)}(t,\textbf{x})\vert \mathcal{G}_{j-1}].
\end{equation*}

In fact, Anderson's inequality \cite{anderson1955integral} implies that a Gaussian random variable $Z \sim N(\mu,\sigma^2)$ and any $a >0$, the probability $P(\vert Z\vert \leq a)$ is maximized when $\mu = 0$, thus
\begin{equation}
\label{probA}
\begin{split}
P\left(I_j\vert \mathcal{G}_{j-1}\right)&\leq P\left(\vert v_g(t,\textbf{x})\vert\leq 2t_1^{\frac{\alpha-\beta}{2\alpha}}, (t,\textbf{x})\in R_{1,j}\setminus R_{1,j-1}\bigg| \mathcal{G}_{j-1}\right)\\
&\leq P\left(\left\vert Z'\right\vert\leq \frac{2t_1^\frac{\alpha-\beta}{2\alpha}}{\sqrt{\textrm{Var}[N_{(t_1)}(t,\textbf{x})\vert \mathcal{G}_{j-1}]}}\right)
\end{split}
\end{equation}
where $Z'\sim N(0,1)$. Let's use the notation $\textrm{SD}$ to denote the standard deviation of a random variable. By applying the Minkowski inequality to \eqref{vgdecom} and \eqref{Xdecom}, we get
\[\textrm{SD}[N_{(t_1)}(t,\textbf{x})]\leq \textrm{SD}(X)+\textrm{SD}(Y)\]
and
\[\textrm{SD}(X)\leq \sum_{(t,\textbf{x})\in R_{1,j-1}}\left\vert\eta^{(j)}(t,\textbf{x})\right\vert\cdot \textrm{SD}[N_{(t_1)}(t,\textbf{x})].\]

If we can control coefficients by restricting
\[\sum_{(t,\textbf{x})\in R_{1,j-1}}\left\vert\eta^{(j)}(t,\textbf{x})\right\vert<\frac{\sqrt{C_7}}{2\sqrt{C_8}},\]
then by Lemma \ref{varbound}, we obtain
\begin{equation*}
\begin{split}
\textrm{SD}(X)&\leq \sum_{(t,\textbf{x})\in R_{1,j-1}}\left\vert\eta^{(j)}(t,\textbf{x})\right\vert\cdot \textrm{SD}[N_{(t_1)}(t,\textbf{x})]\\
&\leq \left(\sum_{(t,\textbf{x})\in R_{1,j-1}}\left\vert\eta^{(j)}(t,\textbf{x})\right\vert\right)\cdot\sup_{(t,\textbf{x})\in R_{1,j-1}}\textrm{SD}[N_{(t_1)}(t,\textbf{x})]\\
&<\frac{\sqrt{C_7}}{2\sqrt{C_8}}\cdot \sqrt{C_8}t_1^{\frac{\alpha-\beta}{2\alpha}}=\frac{\sqrt{C_7}}{2}t_1^{\frac{\alpha-\beta}{2\alpha}}.
\end{split}
\end{equation*}
Therefore, $\textrm{SD}(Y)$ can be bounded below by
\begin{equation*}
\textrm{SD}(Y)\geq \textrm{SD}(N_{(t_1)}(t,\textbf{x})) - \textrm{SD}(X)>\frac{\sqrt{C_7}}{2}t_1^{\frac{\alpha-\beta}{2\alpha}},
\end{equation*}
so that we can continue to derive the uniform upper bound of $P(I_j\vert \mathcal{G}_{j-1})$ in \eqref{probA} as follows,
\begin{equation*}
\begin{split}
P(I_j\vert \mathcal{G}_{j-1})&\leq P\left(\vert Z'\vert\leq \frac{2t_1^{\frac{\alpha-\beta}{2\alpha}}}{\sqrt{\textrm{Var}[N_{(t_1)}(t,\textbf{x})\vert \mathcal{G}_{j-1}]}}\right)\\
&\leq P\left(\vert Z'\vert\leq \frac{4t_1^{\frac{\alpha-\beta}{2\alpha}}}{\sqrt{C_7 t_1^{\frac{\alpha-\beta}{\alpha}}}}\right)\\
&=P\left(\vert Z'\vert\leq C'\right)<1,
\end{split}
\end{equation*}
where $C'$ depends only on $\mathcal{C}_1$, $d$, $\alpha$, and $\beta$. A bound \eqref{jbound} on $j$ and \eqref{Acond} together yield
\begin{equation}
\label{probAbound}
P\left(\bigcap_{j=-n_1+1}^{n_1-1}A_j\right)\leq C^{\varepsilon^{-2}}=C\exp\left(-\frac{C'}{\varepsilon^2}\right),
\end{equation}
where $C,C'$ depend only on $\mathcal{C}_1$, $d$, $\alpha$, and $\beta$. The following lemma shows how to select $c_0$ to ensure $\sum\limits_{(t,\textbf{x})\in R_{1,j-1}}\vert\eta^{(j)}(t,\textbf{x})\vert\leq \frac{\sqrt{C_7}}{2\sqrt{C_8}}$, which completes the proof.
\end{proof}

\begin{lemma}\label{coeffbound}
For a given $\varepsilon>0$, we may choose $c_0>0$ in \eqref{c0} such that
\begin{equation*}
\sum_{(t,\textbf{x})\in R_{1,j-1}}\vert\eta^{(j)}(t,\textbf{x})\vert\leq \frac{\sqrt{C_7}}{2\sqrt{C_8}}.
\end{equation*}
\end{lemma}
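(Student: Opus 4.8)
The plan is to identify the coefficients $\eta^{(j)}$ with the solution of the Gaussian normal equations and then bound $\sum|\eta^{(j)}|$ by a diagonal-dominance (Neumann series) estimate whose only input is Lemma \ref{varbound}. Since every point of $R_{1,j-1}$ and of $R_{1,j}\setminus R_{1,j-1}$ lives at the single time level $t_1=c_0\varepsilon^4$, I write $S=\{{\bf y}:(t_1,{\bf y})\in R_{1,j-1}\}$ (the case $S=\emptyset$, i.e. $j=-n_1+1$, being trivial since then $\eta^{(j)}\equiv 0$) and fix a spatial point ${\bf x}$ with $(t_1,{\bf x})\in R_{1,j}\setminus R_{1,j-1}$. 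As ${\bf x}$ has a coordinate equal to $j\varepsilon^2$ while every coordinate of every point of $S$ is at most $(j-1)\varepsilon^2$, we have ${\bf x}\notin S$, so the covariance bound in Lemma \ref{varbound} applies to every pair of points occurring below. Because $X=\E[N_\varepsilon(t_1,{\bf x})\mid\mathcal{G}_{j-1}]$ is the $L^2$-orthogonal projection of the mean-zero Gaussian $N_\varepsilon(t_1,{\bf x})$ onto $\mathrm{span}\{N_\varepsilon(t_1,{\bf y}):{\bf y}\in S\}$, the coefficient vector $\eta=(\eta^{(j)}(t_1,{\bf y}))_{{\bf y}\in S}$ is characterized by $\Sigma\eta=b$, where $\Sigma_{{\bf y}{\bf z}}=\mathrm{Cov}(N_\varepsilon(t_1,{\bf y}),N_\varepsilon(t_1,{\bf z}))$ and $b_{\bf z}=\mathrm{Cov}(N_\varepsilon(t_1,{\bf x}),N_\varepsilon(t_1,{\bf z}))$.

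Next I would split $\Sigma=D+E$ with $D$ its diagonal part. Lemma \ref{varbound} gives $D_{{\bf y}{\bf y}}=\mathrm{Var}(N_\varepsilon(t_1,{\bf y}))\ge C_7t_1^{\frac{\alpha-\beta}{\alpha}}$ together with $|E_{{\bf y}{\bf z}}|\le C_8t_1|{\bf y}-{\bf z}|^{-\beta}$ and $|b_{\bf z}|\le C_8t_1|{\bf x}-{\bf z}|^{-\beta}$, where ${\bf x}\notin S$ guarantees that no diagonal term enters $b$. The combinatorial heart of the argument is that $S\subset\varepsilon^2\Z^d\cap[-1,1]^d$, so that for any lattice point ${\bf p}$,
\[
\sum_{{\bf y}\in S,\,{\bf y}\ne{\bf p}}|{\bf y}-{\bf p}|^{-\beta}\le\varepsilon^{-2\beta}\sum_{{\bf m}\in\Z^d\setminus\{0\},\,|{\bf m}|\le C\varepsilon^{-2}}|{\bf m}|^{-\beta}\le C(d,\beta)\,\varepsilon^{-2\beta}\cdot\varepsilon^{-2(d-\beta)}=C(d,\beta)\,\varepsilon^{-2d},
\]
where the dyadic-shell sum over $|{\bf m}|\sim 2^k$ is geometric and sums to $C(d,\beta)(\varepsilon^{-2})^{d-\beta}$ precisely because $\beta<d$. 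Consequently both the maximal absolute column sum $\|D^{-1}E\|_{\ell^1\to\ell^1}$ and the norm $\|D^{-1}b\|_{\ell^1}$ are bounded by
\[
\frac{C_8t_1}{C_7t_1^{\frac{\alpha-\beta}{\alpha}}}\,C(d,\beta)\,\varepsilon^{-2d}=C_9\,t_1^{\beta/\alpha}\,\varepsilon^{-2d}=C_9\,c_0^{\beta/\alpha}\,\varepsilon^{\frac{4\beta}{\alpha}-2d}=:\theta,
\]
with $C_9$ depending only on $C_7,C_8,\alpha,\beta,d$.

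Finally I would set $\mathcal{C}=(3C_9)^{-\alpha/\beta}$, so that \eqref{c0} forces $\theta<\tfrac13$. In particular $\|D^{-1}E\|_{\ell^1\to\ell^1}<1$, hence $I+D^{-1}E$ is invertible and so is $\Sigma=D(I+D^{-1}E)$ (being a covariance matrix with trivial kernel, $\Sigma$ is in fact positive definite); the normal equations then have the unique solution $\eta=(I+D^{-1}E)^{-1}D^{-1}b$, and the Neumann series yields
\[
\sum_{{\bf y}\in S}\bigl|\eta^{(j)}(t_1,{\bf y})\bigr|=\|\eta\|_{\ell^1}\le\frac{\|D^{-1}b\|_{\ell^1}}{1-\|D^{-1}E\|_{\ell^1\to\ell^1}}\le\frac{\theta}{1-\theta}<\frac12,
\]
uniformly in $j$, which is the claim.

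I expect the main obstacle to be the exponent bookkeeping that makes the required smallness of $\theta$ equivalent, up to a constant, to the $\varepsilon$-dependent constraint \eqref{c0}: one must carry $t_1=c_0\varepsilon^4$ through the variance lower bound and covariance decay of Lemma \ref{varbound} and through the lattice sum, and then check that the admissible range for $c_0$ that comes out is precisely of the form $\mathcal{C}\varepsilon^{(2\alpha d-4\beta)/\beta}$. This is also the only place where the hypothesis $\beta<d$ (rather than merely $\beta<\alpha$) is used, namely to prevent $\sum_{{\bf m}\in\Z^d\setminus\{0\},\,|{\bf m}|\le C\varepsilon^{-2}}|{\bf m}|^{-\beta}$ from growing faster than $\varepsilon^{-2(d-\beta)}$.
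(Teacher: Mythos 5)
Your proposal is correct and matches the paper's argument in all essentials: you identify $\eta$ as the solution of the normal equations $\Sigma\eta=b$, invoke Lemma \ref{varbound} for the diagonal lower bound and off-diagonal decay, bound the lattice sum $\sum|{\bf y}-{\bf p}|^{-\beta}\lesssim\varepsilon^{-2d}$, and conclude by a Neumann-series/diagonal-dominance estimate with the same $c_0$-constraint \eqref{c0}. The only (cosmetic) difference is that you split $\Sigma=D(I+D^{-1}E)$ with $D$ the diagonal of variances and bound $\|\eta\|_{\ell^1}\le\theta/(1-\theta)$, whereas the paper writes $\Sigma={\bf D}{\bf T}{\bf D}$ with ${\bf D}$ the diagonal of standard deviations and bounds $\|\Sigma^{-1}\|_{1,1}$ and $\|{\bf X}\|_{\ell^1}$ separately; these are algebraically equivalent routes to the same estimate.
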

\begin{proof}
Let $X$ and $Y$ be random variables defined in \eqref{vgdecom} and \eqref{Xdecom}. Since $Y$ and $\mathcal{G}_{j-1}$ are independent, for $\forall(t,\textbf{x})\in R_{1,j-1}$, we have
\[\textrm{Cov}[Y,N_{(t_1)}(t,\textbf{x})]=0,\]
and for $(t,\textbf{y})\in R_{1,j}\setminus R_{1,j-1}$,
\begin{equation}
\label{noisecov}
\begin{split}
\textrm{Cov}[N_{(t_1)}(t,\textbf{x}),N_{(t_1)}(t,\textbf{y})]&=\textrm{Cov}[N_{(t_1)}(t,\textbf{x}),X]\\
&=\sum_{(t,\textbf{x'})\in R_{1,j-1}}\eta^{(j)}(t,\textbf{x'}) \textrm{Cov}[N_{(t_1)}(t,\textbf{x}),N_{(t_1)}(t,\textbf{x'})].
\end{split}
\end{equation}

We write the equation \eqref{noisecov} in a matrix form
\begin{equation*}
\textbf{X} =\Sigma\eta,
\end{equation*}
where $\eta=\left(\eta^{(j)}(t,\textbf{x})\right)_{(t,\textbf{x})\in R_{1,j-1}}^T$, $\textbf{X}=\left\lbrace\textrm{Cov}[N_{(t_1)}(t,\textbf{x}),N_{(t_1)}(t,\textbf{y})]\right\rbrace_{(t,\textbf{x})\in R_{1,j-1}}^T,$
and $\Sigma$ is the covariance matrix of $\left(N_{(t_1)}(t,\textbf{x})\right)_{(t,\textbf{x})\in R_{1,j-1}}$. Let $\vert\vert\cdot\vert\vert_{1,1}$ be the matrix norm induced by the $\vert\vert\cdot\vert\vert_{l_1}$ norm, that is for a matrix $\textbf{A}$,
\[\vert\vert\textbf{A}\vert\vert_{1,1}:=\sup_{\textbf{x}\neq \textbf{0}}\frac{\vert\vert \textbf{Ax}\vert\vert_{l_1}}{\vert\vert\textbf{x}\vert\vert_{l_1}}.\]
It can be shown that $\vert\vert \textbf{A}\vert\vert_{1,1}=\max\limits_j\sum\limits_{i=1}^n\vert a_{ij}\vert$ (see page 259 of \cite{rao2000linear}). Therefore, we have
\begin{equation*}
\vert\vert \eta\vert\vert_{l_1}=\vert\vert\Sigma^{-1}\textbf{X}\vert\vert_{l_1}\leq\vert\vert\Sigma^{-1}\vert\vert_{1,1}\vert\vert\textbf{X}\vert\vert_{l_1}.
\end{equation*}

We rewrite $\Sigma=\textbf{D}\textbf{T}\textbf{D}$, where $\textbf{D}$ is a diagonal matrix with entries $\sqrt{\textrm{Var}[N_{(t_1)}(t,\textbf{x})]}$, and $\textbf{T}$ is the correlation matrix with entries
\[e_{\textbf{xx'}}=\frac{\textrm{Cov}[N_{(t_1)}(t,\textbf{x}),N_{(t_1)}(t,\textbf{x'})]}{\sqrt{\textrm{Var}[N_{(t_1)}(t,\textbf{x})]}\cdot\sqrt{\textrm{Var}[N_{(t_1)}(t,\textbf{x'})]}}.\]
Thanks to Lemma \ref{varbound}, for $\textbf{x}\neq \textbf{x'}$, $\vert e_{\textbf{xx'}}\vert$ can be bounded above by
\[\vert e_{\textbf{xx'}}\vert\leq \frac{C_9t_1\left\vert \textbf{x}-\textbf{x'}\right\vert^{-\beta}}{C_7t_1^{1-\beta/\alpha}}.\]

Define $\textbf{A}=\textbf{I}-\textbf{T}$, which has zero diagonal entries, so we can bound $\vert\vert\textbf{A}\vert\vert_{1,1}$ by
\begin{equation}
\label{matrixA}
\begin{split}
\vert\vert\textbf{A}\vert\vert_{1,1}&=\max_{\textbf{x}}\sum_{\textbf{x}\neq\textbf{x'}}\vert e_{\textbf{xx'}}\vert\leq 2 \sum_{(t,\textbf{x})\in R_{1,n_1-1}}\vert e_{\textbf{x0}}\vert=\frac{2C_9t_1^{\beta/\alpha}}{C_7}\sum_{(t,\textbf{x})\in R_{1,n_1-1}}\vert \textbf{x}\vert^{-\beta}\\
&\leq\frac{C(d)C_9t_1^{\beta/\alpha}}{C_7\varepsilon^{2\beta}}\int_0^{\sqrt{d}\varepsilon^{-2}}r^{d-\beta-1}dr=\frac{C(d,\beta)C_9}{C_7}\cdot\frac{(c_0\varepsilon^4)^{\beta/\alpha}}{\varepsilon^{2d}}.
\end{split}
\end{equation}

For any $\varepsilon>0$, $c_0$ is selected to satisfy
\begin{equation*}
\vert\vert\textbf{A}\vert\vert_{1,1}\leq\frac{C(d,\beta)C_9}{C_7}\cdot\frac{(c_0\varepsilon^4)^{\beta/\alpha}}{\varepsilon^{2d}}<\phi<1,
\end{equation*}
for which
\begin{equation}
\label{selectc0}
c_0<C(d,\beta,C_7,C_8,C_9)\phi^{\frac{\alpha}{\beta}}\varepsilon^{\frac{2\alpha d-4\beta}{\beta}}.
\end{equation}
Let's denote $\mathcal{C}=C(d,\beta,C_7,C_8,C_9)\phi^{\frac{\alpha}{\beta}}$, and such $c_0$ will lead to
\begin{equation*}
\vert\vert\textbf{T}^{-1}\vert\vert_{1,1}=\vert\vert(\textbf{I}-\textbf{A})^{-1}\vert\vert_{1,1}\leq \frac{1}{1-\vert\vert\textbf{A}\vert\vert_{1,1}}<\frac{1}{1-\phi},
\end{equation*}
and $\vert\vert\Sigma^{-1}\vert\vert_{1,1}\leq \vert\vert\textbf{D}^{-1}\vert\vert_{1,1}\cdot\vert\vert\textbf{T}^{-1}\vert\vert_{1,1}\cdot\vert\vert\textbf{D}^{-1}\vert\vert_{1,1}\leq \frac{1}{C_7(1-\phi)}t_1^{-\frac{\alpha-\beta}{\alpha}}$. By Lemma \eqref{varbound}, $\vert\vert\textbf{X}\vert\vert_{l_1}$ can be bounded above by
\begin{equation*}
\vert\vert\textbf{X}\vert\vert_{l_1}\leq\sum_{(t,\textbf{x})\in R_{1,j-1}}C_9t_1\left\vert \textbf{x}-\textbf{y}\right\vert^{-\beta},
\end{equation*}
where $(t,\textbf{y})\in R_{1,j}\setminus R_{1,j-1}$. As how we compute the upper bound of $\vert\vert\textbf{A}\vert\vert_{1,1}$ in \eqref{matrixA}, we derive that
\begin{equation*}
\begin{split}
\vert\vert\eta\vert\vert_{l_1}&\leq \frac{1}{C_7(1-\phi)}t_1^{-\frac{\alpha-\beta}{\alpha}}\vert\vert\textbf{X}\vert\vert_{l_1}<\frac{1}{1-\phi}\cdot\sum_{(t,\textbf{x})\in R_{1,j-1}}\frac{C_9t_1\left\vert \textbf{x}-\textbf{y}\right\vert^{-\beta}}{C_7t_1^{\frac{\alpha-\beta}{\alpha}}}\\
&<\frac{\phi}{1-\phi}\rightarrow 0 \quad \text{as } \phi \rightarrow 0.
\end{split}
\end{equation*}
Therefore, we can make 
\begin{equation*}
\sum_{(t,\textbf{x})\in R_{1,j-1}}\vert\eta^{(j)}(t,\textbf{x})\vert\leq \frac{\sqrt{C_7}}{2\sqrt{C_8}}
\end{equation*}
with the selected $c_0$ in \eqref{selectc0}.
\end{proof}

Finally, combining \eqref{sumprob}, \eqref{probB} and \eqref{probAbound} produces
\begin{equation*}
\begin{split}
P(F_{1})&\leq \frac{C(d)C_5}{(1\wedge \sqrt{{c_0}^d})\varepsilon^{2d}}\exp\left(-\frac{C_6}{4\mathcal{D}^2t_1^{\frac{\alpha-\beta}{\alpha}}}\right)+C\exp\left(-\frac{C'}{\varepsilon^2}\right)\\
&\leq C_5'\exp\left(-\frac{d}{2}\ln t_1-\frac{C_6}{4\mathcal{D}^2t_1^{\frac{\alpha-\beta}{\alpha}}}\right)+C\exp\left(-\frac{C'}{\varepsilon^2}\right),
\end{split}
\end{equation*}
and we may choose a constant $\mathcal{D}_1(d,\alpha,\beta)>0$ such that for any $0<\mathcal{D}<\mathcal{D}_1$,
\begin{align*}
P(F_1)&\leq C_5'\exp\left(-\frac{C_6'}{\mathcal{D}^2t_1^{\frac{\alpha-\beta}{\alpha}}}\right)+C\exp\left(-\frac{C'}{\varepsilon^2}\right)\\
&\leq C\exp\left(-\frac{C'}{\varepsilon^2+\mathcal{D}^2t_1^{\frac{\alpha-\beta}{\alpha}}}\right).
\end{align*}
Since $\varepsilon^2=\left(\mathcal{C}_0^{-1}t_1\right)^{\frac{\beta}{\alpha d}}$ from \eqref{c0}, we finally attain
\begin{equation*}
P(F_1)\leq \textbf{C}_4\exp\left(-\frac{\textbf{C}_5}{t_1^{\left(\frac{\beta}{\alpha d}\wedge \frac{\alpha-\beta}{\alpha}\right)}}\right),
\end{equation*}
which completes the proof of \eqref{F1} as well as Proposition \ref{prop}(a).

\subsection*{Proof of Proposition \ref{prop}(b)} 
\begin{proof} The following lemma is crucial to prove Proposition \ref{prop}(b).
\begin{lemma}[The Gaussian correlation inequality]\label{Gaussiancorr}For any convex symmetric sets $K, L$ in $\R^d$ and any centered Gaussian measure $\mu$ on $\R^d$, we have
\[
\mu(K\cap L)\geq \mu(K)\mu(L).
\]
\end{lemma}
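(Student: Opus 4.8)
The plan is to recognize this as the \emph{Gaussian correlation inequality} and prove it by Royen's interpolation argument; in the write-up one may alternatively just cite Royen's paper, but I sketch the self-contained route below.

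\textbf{Step 1: reduction to symmetric slabs.} Every symmetric convex body in $\R^d$ is an intersection of symmetric slabs $\{{\bf x}:|\langle {\bf x},{\bf u}\rangle|\le c\}$, and since $\mu$ is continuous from below it is enough to treat finite intersections. Thus I would write $K=\bigcap_{i=1}^{p}\{|\langle\cdot,{\bf u}_i\rangle|\le c_i\}$ and $L=\bigcap_{i=p+1}^{n}\{|\langle\cdot,{\bf u}_i\rangle|\le c_i\}$, set $X_i=\langle\gamma,{\bf u}_i\rangle$ for $\gamma\sim\mu$, and --- after perturbing by a small independent Gaussian and letting the perturbation tend to $0$ --- assume $(X_1,\dots,X_n)$ is a nondegenerate centered Gaussian vector. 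Absorbing the $c_i$ into the $X_i$, the claim $\mu(K\cap L)\ge\mu(K)\mu(L)$ becomes
\[
P\big(|X_i|\le1,\ 1\le i\le n\big)\ \ge\ P\big(|X_i|\le1,\ 1\le i\le p\big)\,P\big(|X_i|\le1,\ p< i\le n\big).
\]

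\textbf{Step 2: square and interpolate the covariance.} Set $Z_i=X_i^2$ and let $\Sigma$ be the covariance of $(X_i)_{i\le n}$, viewed in block form with blocks $B_1=\{1,\dots,p\}$, $B_2=\{p+1,\dots,n\}$. For $\tau\in[0,1]$ let $\Sigma(\tau)$ be $\Sigma$ with its two off-diagonal blocks multiplied by $\tau$. Then $\Sigma(\tau)=(1-\tau)\Sigma(0)+\tau\Sigma(1)$ with $\Sigma(0)$ block-diagonal and $\Sigma(1)=\Sigma$; as a convex combination of positive definite matrices, $\Sigma(\tau)$ is positive definite for every $\tau\in[0,1]$. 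Let $(X_i^{\tau})$ be centered Gaussian with covariance $\Sigma(\tau)$, $Z_i^{\tau}=(X_i^{\tau})^2$, and
\[
g(\tau):=P\big(Z_i^{\tau}\le1,\ 1\le i\le n\big),
\]
which is smooth in $\tau$. Independence of the two blocks at $\tau=0$ gives $g(0)=P(\text{block }B_1)\,P(\text{block }B_2)$, whereas $g(1)$ is the left-hand side above. So it suffices to show $g'(\tau)\ge0$ on $[0,1]$, since then $g(1)=g(0)+\int_0^1 g'(\tau)\,d\tau\ge g(0)$.

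\textbf{Step 3: non-negativity of $g'$ (the crux), and conclusion.} To control $g'$ I would pass to the multivariate gamma distribution: $(Z_i^{\tau})$ has Laplace transform $\det(I+2T\Sigma(\tau))^{-1/2}$ with $T=\mathrm{diag}(t_1,\dots,t_n)$, i.e. it is the $\Gamma(\tfrac12,\cdot)$ law attached to $\Sigma(\tau)$. Differentiating $g$ in $\tau$ brings down the off-diagonal block $\partial_\tau\Sigma(\tau)$, and Royen's key identity rewrites the partial derivatives $\partial_{\sigma_{jk}}$ of a multivariate gamma distribution function in terms of non-negative quantities built from lower-dimensional multivariate gamma densities; combined with the block structure this forces $g'(\tau)\ge0$. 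This step --- in essence Royen's 2014 resolution of the Gaussian correlation conjecture --- is the only genuine obstacle; Steps 1 and 2 are routine reductions. Assembling the three steps yields $\mu(K\cap L)=g(1)\ge g(0)=\mu(K)\,\mu(L)$.
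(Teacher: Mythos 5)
Your proposal is correct and aligns with the paper, which proves the lemma simply by citing Royen's theorem (and the Latała--Matlak exposition); you yourself note that citing Royen is the natural write-up. Your three-step sketch of Royen's interpolation argument (reduction to symmetric slabs, convex interpolation of the block covariance, and non-negativity of the derivative via the multivariate gamma Laplace transform) is an accurate outline of that proof, with Step 3 deferred to Royen's identity just as the paper defers the whole argument to the reference.
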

\begin{proof}
See in paper \cite{royen2014simple} and \cite{latala2017royen}. 
\end{proof}

By the Markov property of $u(t,\cdot)$, the behavior of $u(t,\cdot)$ in the interval $[t_n,t_{n+1}]$ depends only on $u(t_n,\cdot)$ and $\dot{F}(t,\textbf{x})$ on $[t_n,t]\times\mathbb{T}^d$. Similar to the proof of Proposition \ref{prop}(a), it is enough to show that, when $2\beta\leq\alpha,~d=1$,
\begin{equation}
\label{E0}
P\left(E_{0}\right)\geq  \textbf{C}_6\exp\left(-\frac{\textbf{C}_7}{t_1^{1-\beta/\alpha}}\right),
\end{equation}
where $\textbf{C}_6$, $\textbf{C}_7$ do not depend on $u_0$ and $\vert u_0(\textbf{x})\vert\leq \frac{\mathcal{C}_3}{3}t_1^{\frac{\alpha-\beta}{2\alpha}}$ for every $\textbf{x}\in\mathbb{T}^d$.

We start with the Gaussian case that $\sigma(t,\textbf{x},u)$ is a smooth and deterministic function as $\sigma(t,\textbf{x})$. For $n \geq 0$, we define a sequence of events 
\begin{equation}
\label{Dsequence}
D_n=\left\lbrace\vert u(t_{n+1},\textbf{x})\vert\leq \frac{\mathcal{C}_3}{6}t_1^{\frac{\alpha-\beta}{2\alpha}},\text{and}~\vert u(t,\textbf{x})\vert\leq\frac{2\mathcal{C}_3}{3}t_1^{\frac{\alpha-\beta}{2\alpha}},~\forall t\in[t_n,t_{n+1}],\textbf{x}\in\mathbb{T}^d\right\rbrace.
\end{equation}
We denote 
\[\bar{p}_t(u_0)(\textbf{x})=\bar{p}(t,\cdot)*u_0(\textbf{x})=\int_{\mathbb{T}^d}\bar{p}(t,\textbf{x}-\textbf{y})u_0(\textbf{y})d\textbf{y},\]
and obtain
\begin{equation}
\label{u0convolute}
\vert\bar{p}_t(u_0)(\textbf{x})\vert\leq \sup_{\textbf{x}\in\mathbb{T}^d}\vert u_0(\textbf{x})\vert\leq \frac{\mathcal{C}_3}{3}t_1^{\frac{\alpha-\beta}{2\alpha}}.
\end{equation}

We consider the smooth and bounded function $\frac{\bar{p}_t(u_0)(\textbf{x})}{t_1\sigma(t,\textbf{x})}$ on $[0,t_1]\times\mathbb{T}^d$. \cite{roncal2016fractional} provided a pointwise formula for the fractional Laplacian of a certain class of functions on $\mathbb{T}^d$, so that one may assume that there is a continuous function $f(t,\textbf{x})$ on $[0,t_1]\times\mathbb{T}^d$ such as
\begin{equation}
\label{PtwFracLap}
\begin{split}
f(t,\textbf{x})&=(-\Delta)^{\frac{d-\beta}{2}}\left(\frac{\bar{p}_t(u_0)(\textbf{x})}{t_1\sigma(t,\textbf{x})}\right)\\
&=\int_{\mathbb{T}^d}\left(\frac{\bar{p}_t(u_0)(\textbf{x})}{t_1\sigma(t,\textbf{x})}-\frac{\bar{p}_t(u_0)(\textbf{y})}{t_1\sigma(t,\textbf{y})}\right)K_{d-\beta}(\textbf{x}-\textbf{y})d\textbf{y}\\
&\leq 2\sup_{s,\textbf{x}}\left\vert\frac{\bar{p}_s(u_0)(\textbf{x})}{t_1\sigma(s,\textbf{x})}\right\vert\int_{\mathbb{T}^d}K_{d-\beta}(\textbf{x}-\textbf{y})d\textbf{y},
\end{split}
\end{equation}
where $K_{d-\beta}$ is the positive kernel function on $\mathbb{T}^d$ defined in \cite{roncal2016fractional}, Theorem 1.5.

On the other hand, $f(t,\textbf{x})$ is defined as
\begin{equation*}
\begin{split}
f(t,\textbf{x})& = (-\Delta)^{\frac{d-\beta}{2}}\left(\frac{\bar{p}_t(u_0)(\textbf{x})}{t_1\sigma(t,\textbf{x})}\right)\\
&=\sum_{\textbf{m}\in\mathbb{Z}^d}\vert\textbf{m}\vert^{d-\beta}C_{\textbf{m}}\left(\frac{\bar{p}_t(u_0)(\textbf{x})}{t_1\sigma(t,\textbf{x})}\right)\exp(2\pi i \textbf{m}\cdot\textbf{x}),
\end{split}
\end{equation*}
where $C_{\textbf{m}}\left(\frac{\bar{p}_t(u_0)(\textbf{x})}{t_1\sigma(t,\textbf{x})}\right)$ is the Fourier coefficient. Thus, convolving $f$ with $\Lambda$ on spatial variable yields
\begin{equation}
\label{fconvlambda}
\int_{\mathbb{T}^d}f(t,\textbf{y})\Lambda(\textbf{x}-\textbf{y})d\textbf{y} = \frac{C\bar{p}_t(u_0)(\textbf{x})}{t_1\sigma(t,\textbf{x})}.
\end{equation}

Now for a probability measure $Q$ given by
\[\frac{dQ}{dP}=\exp\left(Z_{t_1}-\frac{1}{2}\langle Z\rangle_{t_1}\right),\]
where
\begin{equation*}
Z_{t_1}=-\int_{[0,t_1]\times\mathbb{T}^d}f(s,\textbf{y})F(dsd\textbf{y}).
\end{equation*}
If $Z_{t_1}$ satisfies Novikov's condition in \cite{allouba1998different}, then for each fixed $T\in[0,\infty)$, $0\leq t\leq T$, and $\forall A\in\mathcal{B}(\mathbb{T}^d)$, 
\begin{equation*}
\widetilde{F}_t(A):=F_t(A)-\int_{[0,t]\times A^{2}}f(s,\textbf{y})\Lambda(\textbf{x}-\textbf{y})d\textbf{x}d\textbf{y}ds
\end{equation*}
is a centered spatially homogeneous Wiener process under the measure $Q$ (see \cite{allouba1998different} for more details). Moreover, for $\textbf{x}\in \mathbb{T}^d$, the covariance structure of $\dot{F}(t,\textbf{x})$ provides the formal form of $\dot{\widetilde{F}}(t,\textbf{x})$ as follows,
\begin{align*}
\dot{\widetilde{F}}(t,\textbf{x})=\dot{F}(t,\textbf{x})+\int_{\mathbb{T}^d}f(t,\textbf{y})\Lambda(\textbf{x}-\textbf{y})d\textbf{y},
\end{align*}
which is a spatially homogeneous noise under measure $Q$. 

\begin{remark}
\label{DimRes}
\cite{roncal2016fractional} proved the pointwise formula when $0<d-\beta<2$, which restricts $d=1,2,3$. 
\end{remark}

We now check whether $Z_{t_1}$ satisfies Novikov's condition. When $f$ is a deterministic function defined in \eqref{PtwFracLap}, it is equivalent to verify that, 
\begin{equation}
\label{novi}
\int_0^{t_1}\int_{\mathbb{T}^d}\int_{\mathbb{T}^d}f(s,\textbf{y})f(s,\textbf{z})\Lambda(\textbf{y}-\textbf{z})d\textbf{z}d\textbf{y}ds<+\infty.
\end{equation}
Given \eqref{u0convolute}, \eqref{PtwFracLap} and \eqref{fconvlambda}, we derive that
\begin{equation}
\label{novikov}
\begin{split}
&\int_0^{t_1}\int_{\mathbb{T}^d}\int_{\mathbb{T}^d}f(s,\textbf{y})f(s,\textbf{z})\Lambda(\textbf{y}-\textbf{z})d\textbf{z}d\textbf{y}ds=\int_0^{t_1}\int_{\mathbb{T}^d}f(s,\textbf{y})\frac{\bar{p}_s(u_0)(\textbf{y})}{t_1\sigma(s,\textbf{y})}d\textbf{y}ds\\
&\leq\int_0^{t_1}\int_{\mathbb{T}^d}\vert f(s,\textbf{y})\vert\left(\frac{\mathcal{C}_3}{3}t_1^{\frac{\alpha-\beta}{2\alpha}}\right)\cdot\frac{1}{t_1\mathcal{C}_1}d\textbf{y}ds=\frac{C(\mathcal{C}_1,\mathcal{C}_3)}{t_1^{\frac{\alpha+\beta}{2\alpha}}}\int_0^{t_1}\int_{\mathbb{T}^d}\vert f(s,\textbf{y})\vert d\textbf{y}ds\\
&\leq\frac{C(d,\beta,\mathcal{C}_1,\mathcal{C}_3)}{t_1^{\frac{\alpha+\beta}{2\alpha}}}\int_0^{t_1}\sup_{s,\textbf{x}}\left\vert\frac{\bar{p}_s(u_0)(\textbf{x})}{t_1\sigma(s,\textbf{x})}\right\vert ds\leq C(d,\beta,\mathcal{C}_1,\mathcal{C}_3)t_1^{-\beta/\alpha}<+\infty,
\end{split}
\end{equation}
which satisfies \eqref{novi}.

Now we can rewrite the equation $(1.1)$ with deterministic $\sigma$ as
\begin{align*}
u(t,\textbf{x})&=\bar{p}_t(u_0)(\textbf{x})+\int_{[0,t]\times\mathbb{T}^d}\bar{p}(t-s,\textbf{x}-\textbf{y})\sigma(s,\textbf{y})\left[\widetilde{F}(dsd\textbf{y})-\frac{\bar{p}_s(u_0)(\textbf{y})}{t_1\sigma(s,\textbf{y})}dsd\textbf{y}\right]\\
&=\bar{p}_t(u_0)(\textbf{x})-\frac{t\bar{p}_t(u_0)(\textbf{x})}{t_1}+\int_{[0,t]\times\mathbb{T}^d}\bar{p}(t-s,\textbf{x}-\textbf{y})\sigma(s,\textbf{y})\widetilde{F}(dsd\textbf{y})\\
&=\left(1-\frac{t}{t_1}\right)\bar{p}_t(u_0)(\textbf{x})+\int_{[0,t]\times\mathbb{T}^d}\bar{p}(t-s,\textbf{x}-\textbf{y})\sigma(s,\textbf{y})\widetilde{F}(dsd\textbf{y}).
\end{align*}
The first term is $0$ at time $t_1$, and the assumption $\vert u_0(\textbf{x})\vert\leq \frac{\mathcal{C}_3}{3}t_1^{\frac{\alpha-\beta}{2\alpha}}$ leads to, for any $(t,\textbf{x})\in[0,t_1)\times\mathbb{T}^d$,
\begin{equation}
\label{initialbound}
\left|\left(1-\frac{t}{t_1}\right)\bar{p}_t(u_0)(\textbf{x})\right|\leq \frac{\mathcal{C}_3}{3}t_1^{\frac{\alpha-\beta}{2\alpha}}.
\end{equation}

We define
\[\widetilde{N}(t,\textbf{x}):=\int_{[0,t]\times\mathbb{T}^d}\bar{p}(t-s,\textbf{x}-\textbf{y})\sigma(s,\textbf{y})\widetilde{F}(d\textbf{y}ds).\]
$\widetilde{F}$ may not have the same covariance as $F$ under the probability measure $P$; however, under probability measure $Q$, we can apply Lemma \ref{larged} to $\widetilde{F}$ and get
\begin{align*}
Q\left(\sup_{\substack{0\leq t\leq c_0\varepsilon^4\\ \textbf{x}\in[0,c_0\varepsilon^2]^d}}\vert \widetilde{N}(t,\textbf{x})\vert>\frac{\mathcal{C}_3}{6}t_1^{\frac{\alpha-\beta}{2\alpha}}\right)&=Q\left(\sup_{\substack{0\leq t\leq (c_0)^{-1}(\sqrt{c_0}\varepsilon)^4\\ \textbf{x}\in[0,(\sqrt{c_0}\varepsilon)^2]^d}}\vert \widetilde{N}(t,\textbf{x})\vert>\frac{\mathcal{C}_3}{6}t_1^{\frac{\alpha-\beta}{2\alpha}}\right)\\
&\leq C_5\exp\left(-\frac{\mathcal{C}_3^2C_6}{36\mathcal{C}_2^2}\right)<1,
\end{align*}
where $\gamma = c_0^{-1}>1$ and $\kappa=\frac{\mathcal{C}_3c_0^{(\beta-\alpha)/2\alpha}}{6}$ in Lemma \ref{larged}. The last inequality is derived from the definition of $\mathcal{C}_3$ in \eqref{c3}. 

\begin{remark}
\label{largebeta}
When $d=1$ and $\beta<\alpha<2\beta$, from \eqref{c0}, $c_0>1$ as $\varepsilon$ approaching $0$, thus we would rather consider another inequality for the probability under measure $Q$,
\begin{equation*}
Q\left(\sup_{\substack{0\leq t\leq c_0\varepsilon^4\\ \textbf{x}\in[0,\varepsilon^2]^d}}\vert \widetilde{N}(t,\textbf{x})\vert>\frac{\mathcal{C}_3}{6}t_1^{\frac{\alpha-\beta}{2\alpha}}\right)\leq C_5\exp\left(-\frac{\mathcal{C}_3^2C_6}{36\mathcal{C}_2^2}\right)<1,
\end{equation*}
where $\gamma = c_0>1$ and $\kappa=\frac{\mathcal{C}_3c_0^{(\alpha-\beta)/2\alpha}}{6}$ in Lemma \ref{larged}.
\end{remark}

By the Gaussian correlation inequality (Lemma \ref{Gaussiancorr}), we obtain
\begin{equation}
\label{Q}
\begin{split}
Q\left(\sup_{\substack{0\leq t\leq t_1\\ \textbf{x}\in\mathbb{T}^d}}\vert \widetilde{N}(t,\textbf{x})\vert\leq\frac{\mathcal{C}_3}{6}t_1^{\frac{\alpha-\beta}{2\alpha}}\right)&\geq Q\left(\sup_{\substack{0\leq t\leq t_1\\ \textbf{x}\in[0,c_0\varepsilon^2]^d}}\vert \widetilde{N}(t,\textbf{x})\vert\leq\frac{\mathcal{C}_3}{6}t_1^{\frac{\alpha-\beta}{2\alpha}}\right)^{\left(\frac{1}{c_0\varepsilon^2}\right)^d}\\
&\geq \left[1-C_5\exp\left(-\frac{\mathcal{C}_3^2C_6}{36\mathcal{C}_2^2}\right)\right]^{\left(\frac{1}{c_0\varepsilon^2}\right)^d}.
\end{split}
\end{equation}
From \eqref{Dsequence} and \eqref{initialbound}, we have the following inequality under measure $Q$,
\begin{equation}
\label{D0}
Q(D_0)\geq Q\left(\sup_{\substack{0\leq t\leq t_1\\ \textbf{x}\in\mathbb{T}^d}}\vert \widetilde{N}(t,\textbf{x})\vert\leq\frac{\mathcal{C}_3}{6}t_1^{\frac{\alpha-\beta}{2\alpha}}\right).
\end{equation}

Since $\frac{dQ}{dP}$ is a Radon-Nikodym derivative,
\begin{equation}
\label{radon}
1=\E\left[\frac{dQ}{dP}\right]=\E\left[\exp\left(Z_{t_1}-\frac{1}{2}\langle Z\rangle_{t_1}\right)\right]=\E[\exp\left(2Z_{t_1}-2\langle Z\rangle_{t_1}\right)],
\end{equation}
and if we replace $f(s,\textbf{y})$ with $2f(s,\textbf{y})$ in $Z_{t_1}$, given that $f(s,\textbf{y})$ is deterministic, we may estimate the Radon-Nikodym derivative in the following way,
\begin{equation}
\label{radonnikodym}
\begin{split}
\E\left[\left(\frac{dQ}{dP}\right)^2\right]&=\E[\exp\left(2Z_{t_1}-\langle Z\rangle_{t_1}\right)]=\E[\exp\left(2Z_{t_1}-2\langle Z\rangle_{t_1}\right)\cdot\exp(\langle Z\rangle_{t_1})]\\
&\leq \exp\left(C(d,\beta,\mathcal{C}_1,\mathcal{C}_3)t_1^{-\beta/\alpha}\right).
\end{split}
\end{equation}
The last inequality comes from \eqref{novikov} and \eqref{radon}, and the Cauchy-Schwarz inequality implies
\[Q(D_0)\leq \sqrt{\E\left[\left(\frac{dQ}{dP}\right)^2\right]}\cdot\sqrt{P(D_0)}.\]
As a consequence of \eqref{Q}, \eqref{D0} and \eqref{radonnikodym}, we conclude that
\begin{equation}
\label{probD}
\begin{split}
P(D_0)&\geq \exp\left(-C(d,\beta,\mathcal{C}_1,\mathcal{C}_3)t_1^{-\beta/\alpha}\right)\exp\left(\frac{C'}{c_0^d\varepsilon^{2d}}\ln\left[1-C_5\exp\left(-\frac{\mathcal{C}_3^2C_6}{36\mathcal{C}_2^2}\right)\right]\right)\\
&\geq \exp\left(-C(d,\beta,\mathcal{C}_1,\mathcal{C}_3)t_1^{-\beta/\alpha}-C'(c_0\varepsilon^2)^{-d}\right)\\
&\geq C\exp\left(-C't_1^{\beta/\alpha-d}\right),
\end{split}
\end{equation}
Recall from \eqref{c0} that $c_0^d\varepsilon^{2d}=\mathcal{C}_0^{\beta/\alpha}t_1^{d-\beta/\alpha}$, so the term $c_0^{-d}\varepsilon^{-2d}$ will dominate the term $t_1^{-\beta/\alpha}$ as $\varepsilon$ approaching 0, which leads to the last inequality.

\begin{remark}
\label{largebeta2}
When $d=1$ and $\beta<\alpha<2\beta$, Remark \ref{largebeta} yields the following inequality,
\begin{equation}
\label{probD2}
\begin{split}
P(D_0)&\geq \exp\left(-C(d,\beta,\mathcal{C}_1,\mathcal{C}_3)t_1^{-\beta/\alpha}\right)\exp\left(\frac{C'}{\varepsilon^{2d}}\ln\left[1-C_5\exp\left(-\frac{\mathcal{C}_3^2C_6}{36\mathcal{C}_2^2}\right)\right]\right)\\
&\geq C\exp\left(-C't_1^{-\beta/\alpha}\right).
\end{split}
\end{equation}
\end{remark}

We use the freezing technique to deal with the case that $\sigma(t,\textbf{x},u)$ is non-deterministic. Define
\[u(t,\textbf{x})=u_g(t,\textbf{x})+M(t,\textbf{x}),\]
where $u_g(t,\textbf{x})$ satisfies the equation
\[\partial_t u_g(t,\textbf{x})=-(-\Delta)^{\alpha/2}u_g(t,\textbf{x})+\sigma(t,\textbf{x},u_0(\textbf{x}))\dot{F}(t,\textbf{x})\]
with an initial profile $u_0$, and 
\[M(t,\textbf{x})=\int_{[0,t]\times\mathbb{T}^d}\bar{p}(t-s,\textbf{x}-\textbf{y})[\sigma(s,\textbf{y},u(s,\textbf{y}))-\sigma(s,\textbf{y},u_0(\textbf{y}))]F(dsd\textbf{y}).\]

Clearly $u_g$ is Gaussian and $\vert u_0(\textbf{x})\vert\leq \frac{\mathcal{C}_3}{3}t_1^{\frac{\alpha-\beta}{2\alpha}}$, so that, for an event defined as
\[\widetilde{D}_0=\left\lbrace\vert u_g(t_{1},\textbf{x})\vert\leq \frac{\mathcal{C}_3}{6}t_1^\frac{\alpha-\beta}{2\alpha},\text{and}~\vert u_g(t,\textbf{x})\vert\leq\frac{2\mathcal{C}_3}{3}t_1^\frac{\alpha-\beta}{2\alpha}~\forall t\in[0,t_{1}],\textbf{x}\in\mathbb{T}^d\right\rbrace,\]
we can apply \eqref{probD} and \eqref{probD2} to acquire
\begin{equation}
\label{probD0}
P\left(\widetilde{D}_0\right)\geq\begin{cases}
C\exp\left(-C't_1^{-\beta/\alpha}\right) & \beta<\alpha<2\beta,~d=1;\\
C\exp\left(-C't_1^{\beta/\alpha-d}\right) & \text{otherwise}.
\end{cases}
\end{equation}

We define the stopping time 
\[\tau=\inf\left\lbrace t>0:\vert u(t,\textbf{x})-u_0(\textbf{x})\vert>2\mathcal{C}_3t_1^\frac{\alpha-\beta}{2\alpha}\text{~for some $\textbf{x}\in\mathbb{T}^d$}\right\rbrace,\]
and if the set is empty, we set $\tau=+\infty$. Clearly we have $\tau>t_1$ on the event $E_{0}$ defined in \eqref{En} and $\vert u(t,\textbf{x})\vert\leq \mathcal{C}_3t_1^\frac{\alpha-\beta}{2\alpha}$ for $\forall t\in[0,t_1]$ on the event $E_{0}$. We make another definition
\[\widetilde{M}(t,\textbf{x})=\int_{[0,t]\times\mathbb{T}^d}\bar{p}(t-s,\textbf{x}-\textbf{y})[\sigma(s,\textbf{y},u(s\wedge \tau,\textbf{y}))-\sigma(s,\textbf{y},u_0(\textbf{y}))]F(dsd\textbf{y}),\]
and $M(t,\textbf{x})=\widetilde{M}(t,\textbf{x})$ for $t\leq t_1$ on the event $\{\tau>t_1\}$. Moreover, we have
\begin{equation}
\label{E0bound}
\begin{split}
P(E_{0})&\geq P\left(\widetilde{D}_0\bigcap \left\lbrace \sup_{\substack{0\leq t\leq t_1\\ \textbf{x}\in\mathbb{T}^d}}\vert M(t,\textbf{x})\vert\leq\frac{\mathcal{C}_3}{6}t_1^\frac{\alpha-\beta}{2\alpha}\right\rbrace\right)\\
&=P\left(\left(\widetilde{D}_0\bigcap \left\lbrace \sup_{\substack{0\leq t\leq t_1\\ \textbf{x}\in\mathbb{T}^d}}\vert M(t,\textbf{x})\vert\leq\frac{\mathcal{C}_3}{6}t_1^\frac{\alpha-\beta}{2\alpha}\right\rbrace\bigcap\{\tau>t_1\}\right)\right.\\
&\hspace{3cm}\bigcup\left.\left(\widetilde{D}_0\bigcap \left\lbrace \sup_{\substack{0\leq t\leq t_1\\ \textbf{x}\in\mathbb{T}^d}}\vert M(t,\textbf{x})\vert\leq\frac{\mathcal{C}_3}{6}t_1^\frac{\alpha-\beta}{2\alpha}\right\rbrace\bigcap\{\tau\leq t_1\}\right)\right).
\end{split}
\end{equation}
It is easy to check that, on the event $\{\tau>t_1\}$,
\[\sup_{\substack{0\leq t\leq t_1\\ \textbf{x}\in\mathbb{T}^d}}\vert M(t,\textbf{x})\vert=\sup_{\substack{0\leq t\leq t_1\\ \textbf{x}\in\mathbb{T}^d}}\vert \widetilde{M}(t,\textbf{x})\vert,\]
and on the event $\widetilde{D}_0\cap\{\tau\leq t_1\}$,
\[\vert u_g(\tau,\textbf{x})\vert\leq \frac{2\mathcal{C}_3}{3}t_1^\frac{\alpha-\beta}{2\alpha} ~\text{and}~ \vert u_0(\textbf{x})\vert\leq\frac{\mathcal{C}_3}{3}t_1^\frac{\alpha-\beta}{2\alpha} ~\text{for all}~ \textbf{x}\in\mathbb{T}^d,\]
\[~\vert u(\tau,\textbf{x})-u_0(\textbf{x})\vert>2\mathcal{C}_3t_1^\frac{\alpha-\beta}{2\alpha} ~\text{for some}~ \textbf{x}\in\mathbb{T}^d.\]
The above inequalities lead to
\begin{align*}
\sup_{\substack{\textbf{x}}}\vert M(\tau,\textbf{x})\vert&=\sup_{\substack{\textbf{x}}}\vert u(\tau,\textbf{x})-u_g(\tau,\textbf{x})\vert\geq \sup_{\substack{\textbf{x}}}(\vert u(\tau,\textbf{x})\vert-\vert u_g(\tau,\textbf{x})\vert)\\
&\geq \sup_{\substack{\textbf{x}}}\vert u(\tau,\textbf{x})\vert-\frac{2\mathcal{C}_3}{3}t_1^\frac{\alpha-\beta}{2\alpha}\geq 2\mathcal{C}_3t_1^\frac{\alpha-\beta}{2\alpha}-\frac{\mathcal{C}_3}{3}t_1^\frac{\alpha-\beta}{2\alpha}-\frac{2\mathcal{C}_3}{3}t_1^\frac{\alpha-\beta}{2\alpha}\\
&>\frac{\mathcal{C}_3}{6}t_1^\frac{\alpha-\beta}{2\alpha},
\end{align*}
which implies
\[\widetilde{D}_0\cap \left\lbrace \sup_{\substack{0\leq t\leq t_1\\ \textbf{x}\in\mathbb{T}^d}}\vert M(t,\textbf{x})\vert\leq\frac{\mathcal{C}_3}{6}t_1^\frac{\alpha-\beta}{2\alpha}\right\rbrace\cap\{\tau\leq t_1\}=\emptyset.\]
Combining above arguments with \eqref{E0bound} yields
\begin{equation}
\label{E0boundsim}
\begin{split}
P(E_{0})&\geq P\left(\widetilde{D}_0\bigcap \left\lbrace \sup_{\substack{0\leq t\leq t_1\\ \textbf{x}\in\mathbb{T}^d}}\vert \widetilde{M}(t,\textbf{x})\vert\leq\frac{\mathcal{C}_3}{6}t_1^\frac{\alpha-\beta}{2\alpha}\right\rbrace\bigcap\{\tau>t_1\}\right)\\
&\geq P(\widetilde{D}_0)-P\left( \sup_{\substack{0\leq t\leq t_1\\ \textbf{x}\in\mathbb{T}^d}}\vert \widetilde{M}(t,\textbf{x})\vert>\frac{\mathcal{C}_3}{6}t_1^\frac{\alpha-\beta}{2\alpha}\right),
\end{split}
\end{equation}
when $\vert u(t,\textbf{x})-u_0(\textbf{x})\vert\leq 2\mathcal{C}_3t_1^\frac{\alpha-\beta}{2\alpha}$ for all $t\in[0,t_1]$ and $\textbf{x}\in\mathbb{T}^d$. Applying Remark \ref{largeremark} to $\widetilde{M}(t,\textbf{x})$ and a union bound method similar to \eqref{probB} establish
\begin{equation}
\label{probDtilde}
P\left( \sup_{\substack{0\leq t\leq t_1\\ \textbf{x}\in\mathbb{T}^d}}\vert \widetilde{M}(t,\textbf{x})\vert>\frac{\mathcal{C}_3}{6}t_1^\frac{\alpha-\beta}{2\alpha}\right)\leq \frac{C_5}{(c_0\varepsilon^4)^{d/2}}\exp\left(-\frac{C_6}{144\mathcal{D}^2 t_1^{\frac{\alpha-\beta}{\alpha}}}\right).
\end{equation}
Consequently, from \eqref{probD0}, \eqref{E0boundsim}, \eqref{probDtilde}, we conclude that there exists a constant $\mathcal{D}_2(d,\alpha,\beta)>0$ such that for any $\mathcal{D}<\mathcal{D}_2$,
\begin{enumerate}
\item[(a)] when $2\beta\leq\alpha,~d=1$,
\begin{equation*}
\begin{split}
P(E_{0})&\geq C\exp\left(-C't_1^{\beta/\alpha-1}\right)-C_5'\exp\left(-\frac{C_6'}{t_1^{\frac{\alpha-\beta}{\alpha}}}\right)\\
&\geq \textbf{C}_6\exp\left(-\frac{\textbf{C}_7}{t_1^\frac{\alpha-\beta}{\alpha}}\right);
\end{split}
\end{equation*}

\item[(b)] when $\beta<\alpha<2\beta,~d=1$,
\begin{equation}
\label{E0a}
P(E_{0})\geq C\exp\left(-C't_1^{-\beta/\alpha}\right)-C_5'\exp\left(-\frac{C_6'}{t_1^{\frac{\alpha-\beta}{\alpha}}}\right),
\end{equation}
however, we could not get a positive lower bound for $P(E_{0})$ due to the fact that $t_1^{-\beta/\alpha} > t_1^{\beta/\alpha-1}$ when $\varepsilon$ is small;

\item[(c)] when $d>1$,
\begin{equation}
\label{E0b}
P(E_{0})\geq C\exp\left(-C't_1^{\beta/\alpha-d}\right)-C_5'\exp\left(-\frac{C_6'}{t_1^{\frac{\alpha-\beta}{\alpha}}}\right),
\end{equation}
however, we could not get a positive lower bound for $P(E_{0})$ due to the fact that $t_1^{\beta/\alpha-d} > t_1^{\beta/\alpha-1}$ when $\varepsilon$ is small,
\end{enumerate}
which completes the proof of \eqref{E0} as well as Proposition \ref{prop}(b).
\end{proof}




\begin{acks}
The author would like to thank his advisor Professor Carl Mueller and the anonymous referees for constructive comments.
\end{acks}


\end{document}